%% file: Main.tex
\documentclass[
	,12pt%
	,pagesize%
	,headings=small%
	,paper=a4%
	,parskip=false%
	,abstract=true
	,toc=bibliography
	,DIV=12%
]{scrartcl}

\newcommand{\showbrief}{}

\addtokomafont{sectioning}{\normalfont\bfseries}
\setkomafont{title}{\normalfont}
\setkomafont{subtitle}{\normalfont}
\setkomafont{author}{\normalfont}
\setkomafont{date}{\normalfont}
\addtokomafont{pageheadfoot}{\scshape\small}
\setkomafont{caption}{\footnotesize}
\setkomafont{captionlabel}{\usekomafont{caption}\bfseries}
\setcapindent{0pt}

\usepackage{xspace}
\usepackage{subcaption} 

\usepackage[backend=biber,backref, giveninits=true,isbn=false, url=false, maxbibnames=50]{biblatex}
\usepackage[utf8]{inputenc}
\usepackage[T1]{fontenc}
\usepackage{csquotes}
\usepackage[english]{babel}
\usepackage{graphicx}
\graphicspath{{Pictures/}{/}}
\usepackage[export]{adjustbox}	

\usepackage[dvipsnames]{xcolor}
\usepackage{tikz-cd}

\usepackage[draft]{fixme}
\usepackage[nointlimits]{amsmath}
\usepackage{amssymb,mathrsfs,mathtools}
\usepackage{newtxtext}
\usepackage{newtxmath}
\AtBeginDocument{%
  \mathchardef\standardeq=\mathcode`=
  \mathchardef\standardless=\mathcode`<
  \mathchardef\standardgreater=\mathcode`>
  \mathcode`="8000
  \mathcode`<"8000
  \mathcode`>"8000
}
\begingroup\lccode`~`=\lowercase{\endgroup
  \def~}{\mathrel{\mspace{0.936mu}\standardeq\mspace{0.936mu}}}
\begingroup\lccode`~`<\lowercase{\endgroup
  \def~}{\mathrel{\mspace{0.153mu}\standardless\mspace{0.153mu}}}
\begingroup\lccode`~`>\lowercase{\endgroup
  \def~}{\mathrel{\mspace{0.153mu}\standardgreater\mspace{0.153mu}}}

\usepackage{upgreek}
\usepackage{braket}
\usepackage{cancel}
\usepackage{siunitx}
\usepackage{aliascnt} 
\usepackage[amsmath,thmmarks]{ntheorem}
\usepackage[expansion=true,protrusion=true]{microtype}
\usepackage{xkeyval}
\usepackage{authblk}

\usepackage[final,%
	pdftex,%
	bookmarks,%
	bookmarksdepth=3,%
	breaklinks=true,%
	colorlinks=true,%
	urlcolor=NavyBlue,%
	linkcolor=NavyBlue,%
	citecolor=ForestGreen,%
]{hyperref}%
\usepackage[all]{hypcap}

\AtBeginEnvironment{abstract}{\footnotesize}
\makeatletter
\patchcmd{\@maketitle}{\huge}{\Large}{}{}
\makeatother

\input{Macros}

\input{Macros_Spaces}

\usepackage[capitalise,nameinlink]{cleveref}

\newcommand{\aref}[1]{\cref{#1}}

\input{NTheoremEnglish}

\title{A Fenchel Theorem for Gauss maps and uniqueness of minimizers of nonlocal curvature energies}

\author[1]{Elias Döhrer}
\author[2]{Alexander Dohmen} 

\affil[1]{Chemnitz University of Technology, Chemnitz, Germany}

\affil[2]{RWTH Aachen University}

\begin{document}

\maketitle

\input{Abstract.tex}
 \tableofcontents
\input{Introduction}
\input{Prelim}
\input{RelatedFunctionals.tex}

\input{MinTPE.tex} 
\input{GlobaleExtremavonTP.tex}
\input{LowerLimit}
\newpage

\input{appendix}

\printbibliography
\end{document}

%% file: Macros.tex
\newcommand{\Dirichlet}{\cE}
\newcommand{\Curve}{\gamma}

\DeclareDocumentCommand{\Path}{ o }{
	\IfValueTF{#1}{%
		\varGamma_{\!#1}%
	}{%
		\varGamma%
	}%
}

\DeclareDocumentCommand{\dotPath}{ o }{
	\IfValueTF{#1}{%
		\dot{\varGamma}_{\!#1}%
	}{%
		{\dot{\varGamma}}%
	}%
}

\newcommand{\ArcLength}{L}
\newcommand{\PathLength}{\mathcal{L}}

\newcommand{\Hmeasure}{\mathcal{H}}

\DeclareDocumentCommand{\dist}{ o o o }{
	\IfValueTF{#1}{
		\varrho_{#1}\IfValueTF{#2}{(#2,#3)}{}
	}{
		\varrho\IfValueTF{#2}{(#2,#3)}{}
	}
}

\DeclareDocumentCommand{\distC}{ o o }{
	\varrho_{\gamma}\IfValueTF{#1}{(#1,#2)}{}
}

\DeclareDocumentCommand{\LineEl}{ o o }{
	\IfValueTF{#1}{
	  \omega_{#1}\IfValueTF{#2}{(#2)}{}
	}{
	  \omega\IfValueTF{#2}{(#2)}{}
	}
}

\DeclareDocumentCommand{\dLineEl}{ o o }{
	\IfValueTF{#1}{
	  \dd \omega_{#1}\IfValueTF{#2}{(#2)}{}
	}{
    \dd \omega\IfValueTF{#2}{(#2)}{}
	}
}

\DeclareDocumentCommand{\LineElC}{ o }{
	\IfValueTF{#1}{
        \omega_\Curve(#1)
	}{
        \omega_\Curve
	}
}

\DeclareDocumentCommand{\dLineElC}{ o }{
	\IfValueTF{#1}{
	  \dd \omega_\Curve(#1)
	}{
        \dd \omega_\Curve
	}
}

\DeclareDocumentCommand{\LebesgueM}{ o }{
	\IfValueTF{#1}{
        \lambda(#1)
	}{
        \lambda
	}
}

\DeclareDocumentCommand{\dLebesgueM}{ o }{
	\IfValueTF{#1}{
	  \dd \lambda(#1)
	}{
        \dd \lambda
	}
}

\DeclareDocumentCommand{\Speed}{ o }{
	\IfValueTF{#1}{
	  	h_{#1}
	}{
      	h
	}
}

\DeclareDocumentCommand{\InvSpeed}{ o }{
	\IfValueTF{#1}{
	  	H_{#1}
	}{
      	H
	}
}

\newcommand{\brief}[1]{\ifthenelse{\isundefined{\showbrief}}{}{{\color{NavyBlue}{\bigskip\emph{Brief:} #1\newline}}}}%
\newcommand{\Image}{\ensuremath{\mathrm{Im}}}

\newcommand{\Circle}{{\mathbb{S}^1}}

\newcommand{\TPE}{\mathrm{TP}}
\newcommand{\TP}{\mathrm{TP}}

\newcommand{\Angle}[2]{\measuredangle(#1,#2)}

\newcommand{\AmbDim}{n} 
\newcommand{\AmbSpace}{{\R^\AmbDim}}
\newcommand{\Domain}{{\mathbb{T}}}

\DeclareDocumentCommand{\Hess}{ O{} }{\operatorname{Hess}_{#1}}

\DeclareMathOperator{\argmin}{arg\,min}

\DeclareDocumentCommand{\converges}{ o }{
	\mathbin{%
		\IfValueTF{#1}{%
			\mathrel{\vbox{\offinterlineskip\ialign{%
				\hfil##\hfil\cr
				$\scriptscriptstyle#1$\cr
				$-\!\!\!-\!\!\!\rightarrow$\cr
			}}}
		}{%
			-\!\!\!-\!\!\!\rightarrow
		}%
	}%
}

\DeclareDocumentCommand{\wconverges}{ o }{
	\mathbin{%
		\IfValueTF{#1}{%
			\mathrel{\vbox{\offinterlineskip\ialign{%
				\hfil##\hfil\cr
				$\scriptscriptstyle#1$\cr
				$-\!\!\!-\!\!\!\rightharpoonup$\cr
			}}}
		}{%
			-\!\!\!-\!\!\!\rightharpoonup
		}%
	}%
}

\newcommand{\mymathcal}{\mathcal}

\newcommand{\cE}{{\mymathcal{E}}}

\newcommand{\dd}{\mathop{}\!\mathrm{d}}

\newcommand{\ceq}{\coloneqq}

\newcommand{\R}{{\mathbb{R}}}

\newcommand{\N}{\mathbb{N}}
\newcommand{\Z}{{\mathbb{Z}}}

\DeclarePairedDelimiterXPP{\pars}[1]{\mathop{}}{\lparen}{\rparen}{}{#1}
\DeclarePairedDelimiterXPP{\abs}[1]{\mathop{}}{\lvert}{\rvert}{}{#1}
\DeclarePairedDelimiterXPP{\norm}[1]{\mathop{}}{\lVert}{\rVert}{}{#1}
\DeclarePairedDelimiterXPP{\seminorm}[1]{\mathop{}}{\lbrack}{\rbrack}{}{#1}
\DeclarePairedDelimiterXPP{\inner}[1]{\mathop{}}{\langle}{\rangle}{}{#1}
\DeclarePairedDelimiterXPP{\iinner}[1]{\mathop{}}{\langle\!\langle}{\rangle\!\rangle}{}{#1}
\DeclarePairedDelimiterXPP{\brackets}[1]{\mathop{}}{\lbrack}{\rbrack}{}{#1}
\DeclarePairedDelimiterXPP{\braces}[1]{\mathop{}}{\lbrace}{\rbrace}{}{#1}

\DeclarePairedDelimiterXPP{\floor}[1]{\mathop{}}{\lfloor}{\rfloor}{}{#1}
\DeclarePairedDelimiterXPP{\ceil}[1]{\mathop{}}{\lceil}{\rceil}{}{#1}

\DeclarePairedDelimiterXPP{\intervalcc}[1]{\mathop{}}{\lbrack}{\rbrack}{}{#1}
\DeclarePairedDelimiterXPP{\intervalco}[1]{\mathop{}}{\lbrack}{\rparen}{}{#1}
\DeclarePairedDelimiterXPP{\intervaloc}[1]{\mathop{}}{\lparen}{\rbrack}{}{#1}
\DeclarePairedDelimiterXPP{\intervaloo}[1]{\mathop{}}{\lparen}{\rparen}{}{#1}

\DeclarePairedDelimiterXPP{\myset}[2]{\mathop{}}{\lbrace}{\rbrace}{}{#1\,\delimsize\vert\,\mathopen{}#2}

\newcommand{\fdfrac}[2]{\mbox{\footnotesize$\displaystyle\frac{#1}{#2}$}}

\newcommand{\GaussMap}{\ensuremath{\varphi}}
\newcommand{\Funct}{\ensuremath{\mathcal{I}}}


%% file: Macros_Spaces.tex
\DeclareDocumentCommand{\Graph}{ O{} O{} o o}{
	\IfValueTF{#3}{
	  \IfValueTF{#4}{
	  	\mathrm{Graph}^{#1}_{#2}(#3;#4)
	  }{
		\mathrm{Graph}^{#1}_{#2}(#3)
	  }
	}{
		\mathrm{Graph}^{#1}_{#2}
	}
}

\DeclareDocumentCommand{\Emb}{ O{} O{} o o}{
	\IfValueTF{#3}{
	  \IfValueTF{#4}{
	  	\mathrm{Emb}^{#1}_{#2}(#3;#4)
	  }{
		\mathrm{Emb}^{#1}_{#2}(#3)
	  }
	}{
		\mathrm{Emb}^{#1}_{#2}
	}
}

\DeclareDocumentCommand{\Sobo}{ O{} O{} o o}{
	\IfValueTF{#3}{
	  \IfValueTF{#4}{
	  	W^{#1}_{#2}(#3;#4)
	  }{
	  	W^{#1}_{#2}(#3)
	  }
	}{
	  W^{#1}_{#2}
	}
}

\DeclareDocumentCommand{\Bessel}{ O{} O{} o o}{
	\IfValueTF{#3}{
	  \IfValueTF{#4}{
	  	H^{#1}_{#2}(#3;#4)
	  }{
	  	H^{#1}_{#2}(#3)
	  }
	}{
	  H^{#1}_{#2}
	}
}

\DeclareDocumentCommand{\Holder}{ O{} O{} o o}{
	\IfValueTF{#3}{
	  \IfValueTF{#4}{
	  	C^{#1}_{#2}(#3;#4)
	  }{
	  	C^{#1}_{#2}(#3)
	  }
	}{
	  C^{#1}_{#2}
	}
}
\DeclareDocumentCommand{\HolderC}{ O{} O{} }{\Holder[#1][#2][\Circle][\AmbSpace]}

\DeclareDocumentCommand{\Lebesgue}{ O{} O{} o o}{
	\IfValueTF{#3}{
	  \IfValueTF{#4}{
	  	L^{#1}_{#2}(#3;#4)
	  }{
	  	L^{#1}_{#2}(#3)
	  }
	}{
	  L^{#1}_{#2}
	}
}
\DeclareDocumentCommand{\LebesgueC}{ O{} O{} }{\Lebesgue[#1][#2][\Circle][\AmbSpace]}

%% file: NTheoremEnglish.tex
\newtheorem{theorem}{Theorem}[section]
\newtheorem{lemma}[theorem]{Lemma}

\theoremstyle{break}

\theoremstyle{plain}
\theorembodyfont{\normalfont}
\newtheorem{definition}[theorem]{Definition}

{%
    \theoremsymbol{\ensuremath{\Diamond}}%
    \newtheorem{remark}[theorem]{Remark}%
}

\theoremstyle{break}

\theoremheaderfont{\itshape}
\theorembodyfont{\upshape}
\theoremstyle{nonumberplain}
\theoremseparator{.}
\theoremsymbol{\ensuremath{\Box}}	
\newtheorem{proof}{Proof}

%% file: Abstract.tex
In this paper, we prove a Fenchel theorem for Gauss maps by providing sharp lower bounds for the path length of Gauss maps of an embedding.
By combining the Fenchel-type theorem with various techniques from the field of geometric analysis, we show that circles minimize most generalized tangent-point energies. 
Furthermore, we prove that disks minimize all fractional Willmore energies 
among the class of convex planar sets. 

%% file: Introduction.tex
\section{Introduction}
Nonlocal, fractional and repulsive energies have become a very active field of research in recent years.
Repulsive energies originated in the 1980s, when Fukuhara introduced a potential of a polygonal knot, motivated by a Coulomb potential~\cite{FukuharaENERGYKNOT1988}. 
O’Hara continued on this path and defined a family of repulsive potentials (see~\cite{oharaEnergyOfKnots2-94, ohara1, ohara2}). 
In~\cite{ABRAMS2003381}, the authors proved that among curves of fixed length, the circle uniquely minimizes these potentials.
\\
Additionally, self-repulsive potentials can be used to find appealing representatives of a knot class, for example minimizers with the direct method~\cite{freedmanhewang} or critical points by means of gradient flows (\cite{Blatt-GradFlowOHara,blatt2,blatt3,FrechesSchumacherSteenebruggevonderMoselPalaisSmaleConditionGeometric2025, reiterschumacher1}).
Furthermore, such energies prove to be quite useful when modeling and simulating topological effects in physical processes (\cite{Hoidn_2002,zbMATH07990199}). 
\\
In this article, we primarily investigate \text{tangent-point energies}, which first appear in a very specific form in the work of Buck and Orloff~\cite{BuckOrloff}. 
Gonzalez and Maddocks~\cite[\S 6]{GonzalezMaddocks} suggested considering a whole family of tangent-point energies.
These energies are of special interest, because they not only exhibit self-repulsion, but they also provide a notion of fractional curvature.
For its definition let $q\geq 1$ and $\gamma\in W^{1,1}(\Domain, \R^n)$ be an immersion.
Its tangent-point energy is given by 
\begin{equation}\label{eq:DefTPq}
    \TP_q(\gamma)
    \ceq
        \int_{\Domain}
        \int_{\Domain}
            \frac{1}{r_\text{TP}[\gamma](x,y)^q}
        \abs{\gamma'(x)}
        \abs{\gamma'(y)}
        \dd x
        \dd y
    ,
\end{equation}
where 
\[
    \Domain\ceq \R/\Z 
    \;\text{ and }\;
    \frac{1}{
        r_\text{TP}[\gamma](x,y)
    }
    \ceq 
    \frac
    {
        2\abs{
            \gamma(y)-\gamma(x)
            - 
            \frac{\gamma'(x)}{\abs{\gamma'(x)}}
            \inner{
                \frac{\gamma'(x)}{\abs{\gamma'(x)}}, 
                \gamma(y)-\gamma(x)
            }
        }
    }
    {\abs{\gamma(y)-\gamma(x)}^2}
\]
denotes the inverse of the radius of the smallest circle passing through $\gamma(x)$ and $\gamma(y)$ while also being tangent to $\gamma'(x)$ at $\gamma(x)$.

In~\cite{strzeleckivdm} it was shown that the self-avoidance property holds true for $q\geq 2$. 
More precisely, an admissible curve with finite length and finite energy has to be embedded; see~\cite[Theorem 1.1]{strzeleckivdm}.
In~\cite{strzeleckivdm2} these energies were also generalized to suitable $k$-dimensional subsets of $\R^n$,
still exhibiting the self-avoidance property for $q>2k$, even for the more difficult scale invariant case $q=2k$~\cite{Kfer:820789}.
The regularizing effect has been proven by Strzelecki and von der Mosel (see~\cite[Theorem 1.3]{strzeleckivdm},~\cite[Theorem 1.4]{strzeleckivdm2}).
Blatt proved in~\cite{zbMATH06214305} that the energy space of $\TP_q$ is given by the fractional Sobolev-Slobodeckĳ \;space $W^{2-k/q,q}$.
Regarding applications, these energies are excellent for modeling impermeability, see~\cite{MR4182084,MR4273107,MR3800032,10.1145/3658174,10.1145/3478513.3480521,2006.07859}.
\\
Since the energy space $W^{2-\frac{1}{q},q}$ for curves embeds into $C^1$ only if $q>2$, this excludes working in Hilbert-spaces, if one needs that embedding.
This motivated Blatt and Reiter to decouple the powers of the numerator and the denominator to introduce~\cite{blattreiter1} the \textit{generalized tangent-point energies} $\TPE^{(p,q)}$. 
Let $p,q\in \intervalco{1,\infty}$ and $\gamma\in W^{1,1}(\Domain, \R^n)$ be an immersion. 
We define 
\begin{equation}\label{eq:DefTPpq}
    \TPE^{(p,q)}(\gamma)
        \ceq
            \int_{\Domain}
            \int_{\Domain}
                \frac{\abs{P^\perp_{\gamma^\prime(u)}(\gamma(u+w)-\gamma(u))}^q}
                {\abs{\gamma(u+w)-\gamma(u)}^p}
            \abs{\gamma'(u)}
            \abs{\gamma'(u+w)}
            \dd u
            \dd w
    ,
\end{equation}
where $P^\perp_v$ denotes the projection onto the orthogonal complement of $\text{span}(v)$ for $v \in \R ^n \setminus \{0\}$.
In the case $p=2q$, the generalized tangent-point energies simplify to the classical functional from \aref{eq:DefTPq}, i.e. $\TPE^{(2q,q)}=2^{-q} \, \TP_q$, hence the case $p=2q$ is referred to as the \textit{geometric} case.
For $q>1$ and $p\in \intervalco{q+2,2q+1}$, the energy $\TPE^{(p,q)}$ exhibits self-repulsion with energy space $W^{(p-1)/q,q}(\Domain, \R^n)$~\cite[Theorem 1.1]{blattreiter1}. 
In the case $q=2$, Blatt and Reiter exploit the quadratic structure of the energy and show that critical points of $\TPE^{(p,2)}$ (constrained to curves parametrized by arc length) are smooth~\cite[Theorem 1.5]{blattreiter1}.
In cooperation with Reiter and Schumacher the first author of this paper adapted their analysis in order to define a complete Riemannian metric on the space of embedded curves~\cite{DohrerReiterSchumacherCompleteRiemannianMetric2025}.
Additionally, the first author of this paper and Freches showed that $\TPE^{(p,2)}, p\in (4,5)$ are real analytic~\cite{DoehrerFreches-ConvergenceOfGradFlowsOnKnottedCurves}.
\\
The energy is still well-defined for $q\in \intervalcc{0,1}$ or $ p\in \intervalco{0,q+2}$, but does not exhibit the self-avoidance property.
Nonetheless, for $p >q+1$ it penalizes certain types of self-intersections, hence we refer to these energies as \textit{mildly repulsive}, see \aref{section: The lower limit case p=q+1} for details.
In contrast, for $q >1, p \geq 2q+1$, the energy is infinite among embedded closed curves.\\
Additionally, we investigate minimizers of the closely related fractional Willmore energies, introduced in~\cite{blatt2025fractional}.
These energies arise from taking the $L^p$-norm of a nonlocal version of the mean curvature, defined in~\cite{CaffarelliSavinRoquejoffreNonLocalMinSurfaces}.
The first study of the subcritical case has been carried out in~\cite{blatt2025fractional}.
More recently, in~\cite{giacomin2024convex}, the authors investigated the critical, i.e.\ scale-invariant case and proved that minimizers exist in the class of convex sets in $\R^2$.
For $s\in (0,1), p\geq 1$, the fractional Willmore energy of a sufficiently nice set $E\subset \R^2$, where $\partial E$ is parametrized by $\gamma: \Domain \rightarrow \R^2$, is given by 
\begin{equation}\label{eq:DefnNonLocalWillmoreE}
    \mathcal{W}_{s,p}(E)
        \ceq
    \int_{\Domain}
        \abs*{
            c_s\int_{\Domain}
            \frac{\inner{n_\gamma(y), \gamma(x)-\gamma(y)}}{\abs{\gamma(x)-\gamma(y)}^{2+s}}
            \abs{\gamma'(y)}
            \dd y
        }^p
        \abs{\gamma'(x)}
        \dd x
    ,
\end{equation}
where $n_\gamma(y)$ denotes the inward pointing unit normal of the image of $\gamma$ at $\gamma(y)$ and $c_s$ is a constant.
\\
In the following, we denote the length of a curve $\gamma:\Domain \rightarrow \R^n$ by
\begin{align*}
	\PathLength(\gamma)\ceq \int_\Domain \abs{\gamma'}\dd t,
\end{align*}
and define the following subsets of Sobolev spaces
\begin{align*}
    W^{1,p}_{\mathrm{i,r}}(\Domain, \R^n)&\ceq \{ \gamma \in W^{1,p}(\Domain, \R^n)| \, \gamma \text{ injective and regular}\} , \quad p \in \intervalcc{1, \infty},
    \\
    W^{1,\infty}_{\mathrm{i,a}}(\Domain, \R^n)&\ceq \{ \gamma \in W^{1,\infty}_{\mathrm{i,r}}(\Domain, \R^n)| \, \abs{\gamma'}=1\}
    .
\end{align*}
Furthermore, for an arbitrary Riemannian manifold $(M,g)$, we define the geodesic distance
\[
\dist[M][x][y]\ceq
\inf \{ \int_0^1 \abs{f'(t)}\dd t \;: f\in C^1([0,1],M), f(0)=x, f(1)=y\}
.
\]
We investigate the global minimizers of the \textit{generalized tangent-point energies} on $W^{1,\infty}_{\mathrm{i,a}}$ and the \textit{fractional Willmore energies} in the class of convex sets in $\R^2$.
Due to the homogeneity of the functionals, one has to constrain the energy, for example, by fixing the length of the curve, respectively the perimeter of the set. 
While finding minimizers of knot energies in a given isotopy class proves to be a more delicate task, the global minimizer is often conjectured to be the round circle. 
A knot energy is called \textit{basic}, if it is uniquely (up to similarities) minimized by circles among closed, injective curves of fixed length. 
So far, basicness has been established for most O'Hara energies; see~\cite[Corollary 12]{ABRAMS2003381}, for some nonlocal curvature energies related to the so-called \textit{integral Menger curvature}; see~\cite[Proposition 3.1]{strzelecki2013some},~\cite[Lemma 7]{strzelecki2007rectifiable} or for an analogous result for higher dimensional sets~\cite[Theorem 1.5]{KolasinskiStrzVdM-W2pSubmanifoldsByPIntegrability}, and for the geometric tangent-point energies $\TP_q$; see~\cite[Corollary 5.12]{volkmann1}. 
The main purpose of this paper is to extend the last result to the generalized tangent-point energies for a wide range of parameters $p$ and $q$.

\subsection{Main Results}
We prove, that the generalized tangent-point energies $\TPE^{(p,q)}$ are basic, for a wide range of parameters $p$ and $q$.
The statement is split into two theorems. 
The first theorem provides a sharp lower bound for $\TPE^{(p,q)}$, while the second theorem constitutes a rigidity statement for curves attaining said lower bound.
\begin{theorem}[Sharp lower bound for $\TP^{(p,q)}$]\label{thm: main theorem TP}
    Let $q\geq 1$, $p\in \intervalco{q+1,2q+1}\cap \intervalcc{2q-2,4q-2}$ or $p=q+1$ and $L>0$.
    For every $\gamma\in W^{1,1}_{\mathrm{i,r}}(\Domain, \R^n)$ with $\PathLength(\gamma)=L$, the following holds true.
    \begin{equation}\label{eqn:SharpLowerBoundForTP}
        \TP^{(p,q)}(\gamma)
        \geq
        L^{q+2-p} \, \pi^{p-q} 
        \int_{0}^1 
            \sin (\pi w)^{2q-p}
        \dd w
        =
        L^{q+2-p} \, \pi^{p-q} \frac{\Gamma (\frac{2q-p+1}{2})}{\sqrt{\pi} \,  \Gamma (\frac{2q-p+2}{2})}
    ,
    \end{equation}
    where $\Gamma$ denotes the Gamma function.
    Additionally, for $q\geq 1, p\in \intervaloo{2q,2q+1}$, the inequality holds among all convex, Lipschitz curves.
    Equality in~\eqref{eqn:SharpLowerBoundForTP} holds if $\Image(\gamma)$ is a circle.
\end{theorem}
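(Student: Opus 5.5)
By scaling, $\TP^{(p,q)}(\lambda\gamma)=\lambda^{q+2-p}\TP^{(p,q)}(\gamma)$, so I will take $L=1$, which accounts for the factor $L^{q+2-p}$. Both sides are invariant under reparametrization, so I will also assume $\gamma$ is parametrized by arc length, $\abs{\gamma'}=1$. For the circle of length $1$, $\gamma(u)=\frac{1}{2\pi}(\cos 2\pi u,\sin 2\pi u)$, a direct computation gives
\[
\abs{\gamma(u+w)-\gamma(u)}=\tfrac{1}{\pi}\sin(\pi w),
\qquad
\abs{P^\perp_{\gamma'(u)}(\gamma(u+w)-\gamma(u))}=\tfrac{1}{\pi}\sin(\pi w)^2 .
\]
Hence the integrand is $\pi^{p-q}\sin(\pi w)^{2q-p}$, and integrating yields the right-hand side of~\eqref{eqn:SharpLowerBoundForTP}. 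The Gamma-function expression is the standard Wallis/Beta integral $\int_0^1\sin(\pi w)^a\dd w=\Gamma(\frac{a+1}{2})/(\sqrt{\pi}\,\Gamma(\frac a2+1))$. This settles the equality case.

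For the lower bound I write the integrand in terms of the chord length $d(u,w)=\abs{\gamma(u+w)-\gamma(u)}$ and the angle $\theta(u,w)$ between $\gamma'(u)$ and the chord direction $e(u,w)=(\gamma(u+w)-\gamma(u))/d$. The integrand then becomes $d^{q-p}\sin(\theta)^q$. The plan is to fix $w$ and bound $\int_\Domain d^{q-p}\sin^q\theta\dd u$ from below by its value for the circle, namely $\pi^{p-q}\sin(\pi w)^{2q-p}$. The key tool is a Fenchel-type statement for the ``Gauss map'' $u\mapsto e(u,w)\in\mathbb{S}^{n-1}$: for fixed $w$ this is a closed curve on the sphere, and I expect its path length to be at least $2\pi$ (sharp for the circle), as in Fenchel's theorem. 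I also need its relation to $\theta$. Differentiating gives
\[
\partial_u e=\frac{(\gamma'(u+w)-\gamma'(u))^{\perp_e}}{d},
\]
so the length of the Gauss map is controlled by $\int(\sin\theta(u,w)+\sin\theta(u+w,\cdot))/d\dd u$, using angles of the tangents at both endpoints against the chord. Combining with the constraint $d\le \min(w,1-w)$ and the inequality $\int d\dd u\le$ the circle value (chord averages are maximized by the circle, a Lüko-type result), I would apply Hölder/Jensen. The exponents are chosen so that $\sin^q\theta\, d^{q-p}=(\sin\theta/d)^{\alpha}d^{\beta}$, with $\alpha\ge1$ and $\beta$ of a sign that lets me use the chord bound. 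The parameter window $p\in[2q-2,4q-2]$ is precisely what should make both Jensen applications go in the correct direction: convexity of $t\mapsto t^{q}$ for the angle factor, and the right monotonicity in $d$. The symmetrization between $u$ and $u+w$ (substituting $u\mapsto u-w$ in half of the integral) handles the two-endpoint form of the Gauss-map length.

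The main obstacle is the Fenchel step: proving that the chord Gauss map $e(\cdot,w)$ has spherical length at least $2\pi$ for every $w\in(0,1)$ and every injective curve. First I would show that $e(\cdot,w)$ is not contained in any open hemisphere. Indeed, $\int_\Domain d\,e\dd u=\int(\gamma(u+w)-\gamma(u))\dd u=0$, so for any unit $v$ the function $\inner{e,v}$ changes sign. Moreover $e(u+w,1-w)=-e(u,w)$ relates antipodal points. The standard lemma (a closed spherical curve not lying in an open hemisphere has length $\ge2\pi$) then applies. Injectivity ensures $d>0$, so $e$ is defined; Lipschitz (or $W^{1,1}$) regularity suffices after approximation. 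In the limiting case $p=q+1$, the exponent of $d$ is $-1$ and the bound reduces directly to the Gauss-map length, so only the Fenchel step is needed. For $p\in(2q,2q+1)$ and convex curves, $2q-p<0$ reverses the convexity needed in Jensen. There I would instead use convexity to get monotonicity of $\theta$ in $w$ and planar monotone rearrangement, where the planar Gauss map has length exactly $2\pi$ with winding number one.
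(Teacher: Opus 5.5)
\textbf{There is a genuine gap: your fixed-$w$ slicing cannot cover $p<2q$, and at $p=q+1$ the slice inequality you aim for is false.}

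What you do for $p\ge 2q$ is essentially the paper's route through $G^{(p,q)}_w$: symmetrize the two endpoint angles, use the triangle inequality to reach $\tfrac12\abs{\partial_u\GaussMap_\gamma}$, apply the hemisphere argument via $\int_\Domain(\gamma(u+w)-\gamma(u))\dd u=0$, then reverse H\"older against a chord bound. One caveat: reaching $p=4q-2$ needs $\int_\Domain\abs{\gamma(u+w)-\gamma(u)}^{r}\dd u\le(\sin(\pi w)/\pi)^{r}$ for all $r\le 2$ (Wirtinger plus Jensen). The case $r=1$ alone only reaches $p\le 3q-1$ by this route.

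The gap is the range $p<2q$, which the theorem covers for $q>1$: $p=q+1$ and $p\in[2q-2,2q)\cap[q+1,2q)$. There the slice integrand $(\sin\theta/d)^q d^{2q-p}$ carries $d$ with a \emph{positive} exponent. Your only chord information (the circle maximizes $\int_\Domain d^r\dd u$) therefore points the wrong way in every H\"older/Jensen combination. Your claim that at $p=q+1$ ``the bound reduces directly to the Gauss-map length'' also fails. Since $\sin^q\theta/d\le\sin\theta/d$, a lower bound on the length of $u\mapsto\GaussMap_\gamma(u,w)$ gives nothing for $\int_\Domain\sin^q\theta/d\dd u$.

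In fact the slice-wise inequality is false. Take $p=q+1$ and the boundary of a thin stadium of length $1$: two parallel segments at distance $2\varepsilon$ joined by semicircles. As $\varepsilon\to0$, every fixed-$w$ slice tends to $\int_\R(1+t^2)^{-(q+1)/2}\dd t=\pi\int_0^1\sin(\pi v)^{q-1}\dd v$. This is the $w$-average of the circle's slice $\pi\sin(\pi w)^{q-1}$. For $q>1$ it lies strictly below the circle's value $\pi$ at $w=\tfrac12$; for $q=2$ it is $2$ versus $\pi$. So no fixed-$w$ argument can work here; the stadium only ties the circle after integrating in $w$.

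\textbf{The missing idea is to slice in the other variable.} For a.e.\ fixed $u$, the path $w\mapsto\GaussMap_\gamma(u,w)$ runs from $t_\gamma(u)$ to $-t_\gamma(u)$. Applying Young's inequality to the two endpoint angles, with weights $\alpha=p-q$ and $\beta=2q-p$, gives the minorant $F^{(p,q)}$ with integrand $\abs{P^\perp_{\gamma'(u)}\GaussMap_\gamma}^{\beta}\abs{\partial_w\GaussMap_\gamma}^{\alpha}$.

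At $p=q+1$ one has $\alpha=1$. Then $F^{(q+1,q)}_u$ is the length of this path weighted by a function of the angle to the \emph{fixed} pole $t_\gamma(u)$. The calibration $h=K(\inner{t_\gamma(u),\cdot})$ with $K'(x)=-(1-x^2)^{(q-2)/2}$ (\aref{lem:MinI5}) bounds it below by $\pi\int_0^1\sin(\pi w)^{q-1}\dd w$. Equality holds for every convex curve, which is consistent with the stadium.

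For $p\in[2q-2,2q)$, the paper combines Young with the reverse Young inequality (\aref{lem:RevYoung}). This bounds $\TP^{(p,q)}$ below by a positive multiple of $F^{(2q-p+2,2q-p+1)}$ plus a \emph{negative} multiple of $\iint\abs{\gamma(u+w)-\gamma(u)}^{2q-p}$. Only with this sign does the chord bound help, and its requirement $2q-p\le 2$ is exactly where $p\ge 2q-2$ comes from.

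Finally, your sketch for convex curves with $p\in(2q,2q+1)$ is too vague to check. The paper eliminates $d$ via $d\abs{\partial_u\GaussMap_\gamma}=\abs{P^\perp_{\GaussMap_\gamma}(\gamma'(u+w)-\gamma'(u))}$. It then uses that for convex curves the unit tangent has monotone angle, so the circle maximizes $\int_\Domain\abs{\gamma'(u+w)-\gamma'(u)}^s\dd u$ for $s\in(0,1]$ (\aref{lem:TPMaxByCircle}).
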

The set of parameters, for which the first part and the second part of \aref{thm: main theorem TP} hold true, is depicted in Figure 1 (b) as the blue and the red region, respectively.
Surprisingly, the theorem also holds true for $q\geq 1, p=q+1$.
For comparison, the yellow region in Figure 1 (a) illustrates the regime of parameters for which $\TP^{(p,q)}$ exhibits self repulsion.
The discussion of equality in~\eqref{eqn:SharpLowerBoundForTP} is carried out in the following theorem.
\begin{theorem}[Uniqueness of minimizers]\label{thm: UniquenessTheoremTP}
    Let $\gamma\in W^{1,1}_{\mathrm{i,r}}$ with $\PathLength(\gamma)=L$ be given,
    such that $\gamma$ realizes equality in~\eqref{eqn:SharpLowerBoundForTP}.
    \begin{itemize}
        \item[(i)] If $q=1,p=2$, then $\gamma$ is convex.
        \item[(ii)] If $q>1$ and $p\in \intervalco{q+1,2q+1}\cap \intervalcc{2q-2,4q-2}$ or $p=q+1$, then $\Image(\gamma)$ is a circle.
        \item[(iii)] If $q\geq 1, p\in (2q,2q+1)$ and $\gamma$ is convex, then $\Image(\gamma)$ is a circle.
    \end{itemize}
\end{theorem}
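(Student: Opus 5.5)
We propose to retrace the proof of \cref{thm: main theorem TP} and track where each inequality can be strict. Reparametrize $\gamma$ by arc length and scale so that $L=1$. In arc length the energy becomes
\[
\TP^{(p,q)}(\gamma)=\int_\Domain\int_\Domain \frac{\abs{P^\perp_{\gamma'(u)}(\gamma(u+w)-\gamma(u))}^q}{\abs{\gamma(u+w)-\gamma(u)}^p}\dd u\dd w .
\]
The expected structure of the lower-bound proof has two steps. First, for fixed $w$, the integrand is bounded below through a pointwise (or Jensen/Hölder-type) inequality. This inequality involves the chord $\abs{\gamma(u+w)-\gamma(u)}$ and the angle between the chord and the tangent. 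Second, the integral over $u$ is controlled by the Fenchel-type theorem for the Gauss (chord-direction) maps $u\mapsto \frac{\gamma(u+w)-\gamma(u)}{\abs{\gamma(u+w)-\gamma(u)}}$. For each fixed $w$, the path length of this map on $\mathbb{S}^{n-1}$ is bounded below by $2\pi$ times a winding-type quantity. Combined with the chord bound $\abs{\gamma(u+w)-\gamma(u)}\le$ the arc-length bound, this yields the circle value $\pi^{p-q}\sin(\pi w)^{2q-p}$ after integrating in $w$.

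Equality in \eqref{eqn:SharpLowerBoundForTP} forces equality in every intermediate step for a.e.\ $w$. The plan is to extract rigidity from each.

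For (ii), we use equality in the convexity or Jensen step. When $q>1$, the map $t\mapsto t^q$ (or the relevant power combination allowed by $p\in[2q-2,4q-2]$) is strictly convex. So equality forces the relevant quantity to be constant in $u$ for a.e.\ $w$. Concretely, we aim to show that the chord length $\abs{\gamma(u+w)-\gamma(u)}$ is independent of $u$ for a.e.\ $w$, and by continuity for all $w$. A closed curve all of whose chords of parameter-gap $w$ have equal length for every $w$ must satisfy $\abs{\gamma(u+w)-\gamma(u)}=f(w)$. Differentiating $\abs{\gamma(u+w)-\gamma(u)}^2$ in $w$ at $w=0$, or comparing with the equality case of Fenchel's theorem, shows that the curve is planar and convex with constant curvature, i.e.\ a circle. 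Equivalently, we could invoke the equality case of the Fenchel-type theorem: length exactly $2\pi$ of the chord-direction map forces a planar convex Gauss map. Combining this with the equal-chord property then gives the circle via the classical characterization. For the borderline case $p=q+1$ we follow the separate argument of \cref{section: The lower limit case p=q+1} and check strictness in the same way.

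For (i), with $q=1$ and $p=2$, the power is linear, so no strict-convexity information is available. Only equality in the Fenchel-type step survives. The equality case there (Gauss maps of length exactly $2\pi$ traversing a great circle monotonically) yields that $\gamma$ is planar and convex, but nothing more. This explains why the conclusion is weaker.

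For (iii), $\gamma$ is convex and $p\in(2q,2q+1)$, so the exponent $2q-p$ is negative. The lower bound uses a convexity inequality for the map $s\mapsto s^{2q-p}$ (strictly convex) applied to the sine of the chord-tangent angle, whose average is controlled by convexity of the curve. Strictness again forces the angle function to be independent of $u$, which characterizes the circle among convex curves.

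The main obstacle is the rigidity step in (ii): upgrading ``constant a.e.\ in $u$ for a.e.\ $w$'' to a genuine geometric identification, given only $W^{1,1}$ regularity. We would first use finiteness of the energy, and the regularity theory in the repulsive range, to obtain a $C^1$ arc-length curve. Then we would pass to continuity in $(u,w)$ and apply the equal-chord characterization of circles.
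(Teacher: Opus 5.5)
Your treatment of (i) matches the paper: for $q=1$ only the equality case of the Gauss-map length bound survives, and that gives convexity. The rigidity argument for (ii), however, has a genuine gap.

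\emph{The chord-constancy step is false.} You claim that a closed curve with $\abs{\gamma(u+w)-\gamma(u)}$ independent of $u$ for every $w$ must be a circle. This fails in $\R^n$ for $n\ge 4$. Take $\gamma(t)=(r_1\cos 2\pi t,r_1\sin 2\pi t,r_2\cos 2\pi kt,r_2\sin 2\pi kt)$ with $k\ge 2$ and $r_1,r_2>0$. It is embedded, has constant speed, and satisfies $\abs{\gamma(u+w)-\gamma(u)}^2=4r_1^2\sin(\pi w)^2+4r_2^2\sin(\pi kw)^2$, yet it is not even planar. Expanding at $w=0$ only gives constant curvature, and you never establish the equality in the Fenchel bound that would supply planarity.

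\emph{The lower-bound mechanism is also misidentified.} Bounding the chord by arc length is strict for the circle, so it cannot produce $\pi^{p-q}\sin(\pi w)^{2q-p}$. The sharp and rigid ingredient is the Wirtinger-type inequality $\int_\Domain\abs{\gamma(u+w)-\gamma(u)}^2\dd u\le\sin(\pi w)^2/\pi^2$ from \cref{thm: Wirtinger type inequality}, whose equality case is exactly the unit-speed circle. For $p\in(2q,4q-2]$ the paper feeds it in via reverse H\"older on the minorant $G^{(p,q)}_w$ (\cref{lem:MinI4}); concavity of $x\mapsto x^{(p-2q)/(2q-2)}$ is where $p\le 4q-2$ comes from. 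For $p\in[2q-2,2q)\cap(q+1,2q)$ it enters via reverse Young, where the circle \emph{maximizes} $\int_\Domain\abs{\gamma(u+w)-\gamma(u)}^{2q-p}\dd u$. Uniqueness in these ranges comes straight from that equality case, with no chord-constancy step at all.

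\emph{Other sub-cases of (ii) need mechanisms absent from your plan.}
For $p=2q$ the chord factor disappears. Strict convexity of $t\mapsto t^q$ only gives $\abs{\partial_u\GaussMap_\gamma}\equiv 2\pi$, and turning this into a circle is the nontrivial \cref{lem: constant derivative of GaussMap}.

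For $p=q+1$, convexity comes from the equality case of \cref{lem:MinI5}: the paths $w\mapsto\GaussMap_\gamma(u,w)$ are monotone great half circles. The circle is then singled out by the equality case of Young's inequality, $\abs{P^\perp_{\gamma'(u)}(\gamma(u+w)-\gamma(u))}=\abs{P^\perp_{\gamma'(u+w)}(\gamma(u+w)-\gamma(u))}$. This says the chord--tangent angles agree at both endpoints, which among convex curves characterizes circles via the inscribed angle theorem (\cref{thm:UmlaufwinkelsatzProjektorEig}). ``Checking strictness in the same way'' does not provide this.

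\emph{Your regularity fix also fails.} The range in (ii) contains non-repulsive parameters such as $p=q+1$, where finite energy gives no $C^1$ control; convex polygons, for instance, have finite $\TP^{(q+1,q)}$. The paper needs no a priori regularity, since every equality case is handled directly in $W^{1,\infty}_{\mathrm{i,a}}$.

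\emph{Part (iii) is misdescribed.} In the minorant $G^{(p,q)}_w$ the negative power $2q-p$ sits on the chord length, not on the sine of the chord--tangent angle, so your convexity argument does not yield the bound. The paper splits $G^{(p,q)}_w$ by reverse H\"older into two factors:
the Gauss-map length, which is $\ge 2\pi$ by \cref{lem:LengthOfGaussMapinU};
and $\int_\Domain\abs{P^\perp_{\GaussMap_\gamma}(\gamma'(u+w)-\gamma'(u))}^{(p-2q)/(p-q-1)}\dd u$.
Among convex curves the second factor is uniquely maximized by the circle (\cref{lem:TPMaxByCircle}, resting on the $BV$ Wirtinger bound for the tangent indicatrix). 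That equality case is what forces the circle.
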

Note that we obtain uniqueness of circles as minimizers in the entire blue region and in the red region (among convex curves) in Figure 1(b), with $p=2,q=1$ being the only exception.
\newpage
\begin{figure}[!ht]
\begin{subfigure}{0.45\textwidth}
  \centering
  \includegraphics[width=\linewidth]{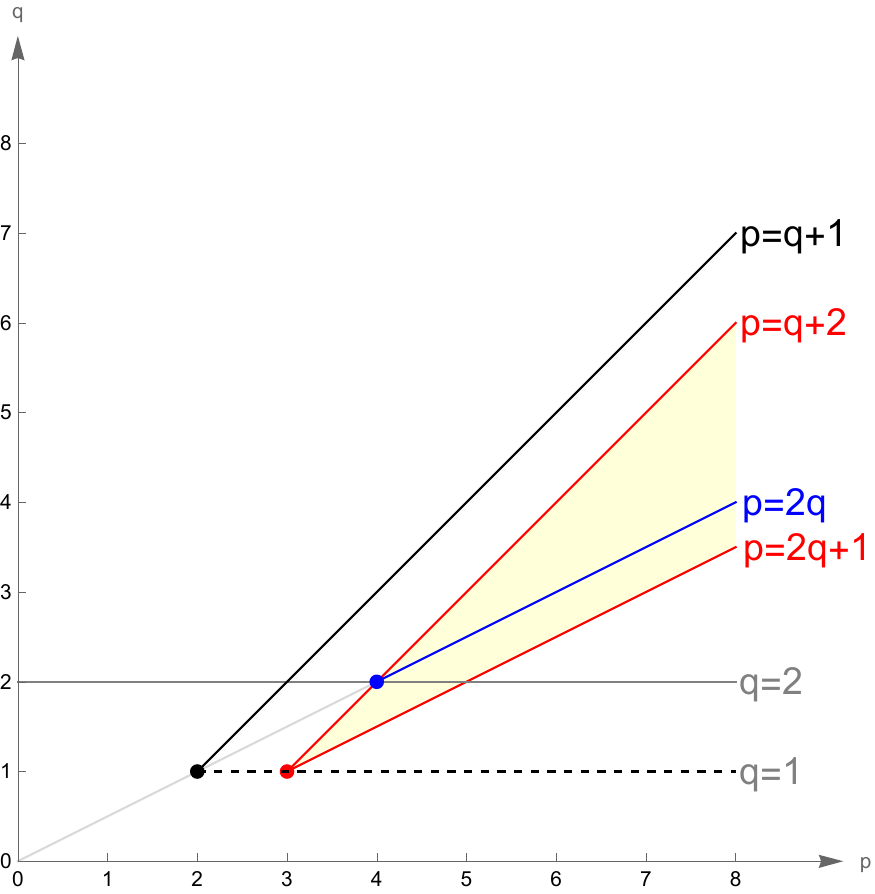}
  \caption{Self-repulsive regime of $\TP^{(p,q)}$}
\end{subfigure}
\hfill
\begin{subfigure}{0.45\textwidth}
  \centering
  \includegraphics[width=\linewidth]{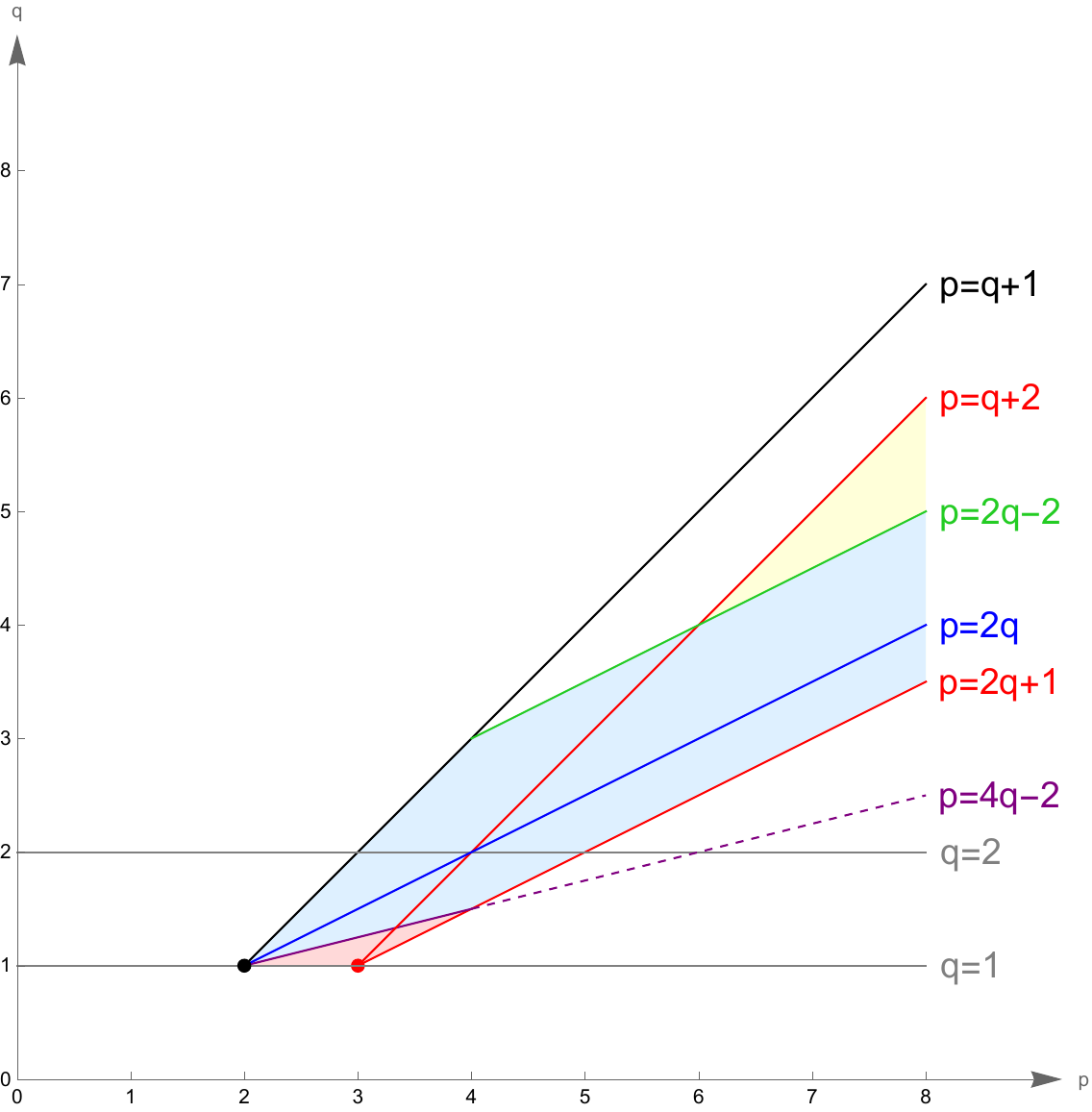}
  \caption{areas where circles minimize $\TP^{(p,q)}$}
\end{subfigure}
\caption{
Figure 1 (a) illustrates the self-repulsive regime of $\TP^{(p,q)}$.
It consists of the scale-invariant \textit{critical case} $p=q+2, q>1$ and the \textit{subcritical case} $p\in (q+2,2q+1), q>1$ (yellow region).
For $p\geq 2q+1$, the energy of any closed $C^1_{\mathrm{i,r}}$-curve is infinite.
We introduce the line $p=q+1, q\geq 1$, which will be referred to as \textit{lower limit case}.
Even though $\TP^{(p,q)}$ is not a knot energy for $p\in \intervaloo{q+1,q+2}, q\geq 1$, the energy still penalizes certain types of self-intersections.
Hence, we refer to this as \textit{mild repulsion}; see \aref{section: The lower limit case p=q+1} for more details.
\\
Figure 1 (b) indicates regions where $\TP^{(p,q)}$ is minimized by circles (blue and red region).
Note that for $p<q+1$, there is no global minimizer within the class of closed, injective, $W^{1,1}$-curves with fixed length, since the infimum $0$ is only attained for straight lines.
}
\end{figure}
\noindent
Motivated by our findings concerning $p=q+1$, we conjecture that circles globally minimize $\TP^{(p,q)}$ for all $q\geq 1, p\in \intervalco{q+1,2q+1}$. 
However, within the framework developed in this paper, one can only show the following:
there exists a $c^\ast\in \intervalcc{2, 2.5}$ such that the scope of \aref{thm: main theorem TP} and \aref{thm: UniquenessTheoremTP} extends to $p \in \intervalco{q+1,2q+1}\cap \intervalcc{2q-c^\ast,2q+c^\ast(q-1)}$; see \aref{rmk:LimitsOfOurMethods}.
\\

As a consequence of Theorems~\ref{thm: main theorem TP} and~\ref{thm: UniquenessTheoremTP}, we prove that disks uniquely minimize the fractional Willmore energies, among convex sets with fixed perimeter.
\begin{theorem}\label{thm:NonlocalWillmoreMinByCircle}
    Let $s\in \intervaloo{0,1}, p\geq 1$. 
    Then for all convex $E \subset \R^2$ with $\text{per}(E)=L$, the following holds true.
    \begin{align*}
        \mathcal{W}_{s,p}(E)
        \geq
        L^{1-ps}
        \pars*{\pi^{1+s}\int_0^1 \sin(\pi w)^{-s} \dd w}^p
    ,
    \end{align*}
    with equality iff $E$ is a disk.
\end{theorem}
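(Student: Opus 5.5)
The plan is to rewrite $\mathcal{W}_{s,p}$ as a power of a tangent-point-type integral and then apply Theorems~\ref{thm: main theorem TP} and~\ref{thm: UniquenessTheoremTP} in the convex regime $p'\in(2q',2q'+1)$ with $q'=1$.

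\textbf{Reduction to a tangent-point integral.} Let $E$ be convex with perimeter $L$, and parametrize $\partial E$ by arc length, $\gamma\in W^{1,\infty}_{\mathrm{i,a}}(\Domain,\R^2)$ with $\PathLength(\gamma)=L$. Since $E$ is convex, $\gamma(x)-\gamma(y)$ points into $E$ from the boundary point $\gamma(y)$, so $\inner{n_\gamma(y),\gamma(x)-\gamma(y)}\geq 0$ for a.e.\ $y$. In the plane $n_\gamma(y)\perp\gamma'(y)$, hence
\[
\inner{n_\gamma(y),\gamma(x)-\gamma(y)}=\abs{P^\perp_{\gamma'(y)}(\gamma(x)-\gamma(y))}.
\]
The inner integral defining the nonlocal mean curvature $H(x)$ is therefore nonnegative and equals $c_s\int_\Domain \abs{P^\perp_{\gamma'(y)}(\gamma(x)-\gamma(y))}\,\abs{\gamma(x)-\gamma(y)}^{-(2+s)}\dd y$. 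I will take $c_s>0$, as in the paper's normalization. Integrating $H$ over $x$ and swapping the variables gives
\[
\int_\Domain H(x)\dd x=c_s\,\TP^{(2+s,1)}(\gamma).
\]

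\textbf{Jensen and the sharp bound.} Because $p\geq 1$, Jensen's (or Hölder's) inequality on $\Domain$ with measure $\abs{\gamma'}\dd x$ of total mass $L$ yields
\[
\mathcal{W}_{s,p}(E)=\int_\Domain H^p\dd x\geq L^{1-p}\Bigl(\int_\Domain H\dd x\Bigr)^p=L^{1-p}c_s^p\,\TP^{(2+s,1)}(\gamma)^p.
\]
With $q=1$ we have $2+s\in(2,3)=(2q,2q+1)$, and $\gamma$ is a convex Lipschitz curve. The second part of Theorem~\ref{thm: main theorem TP} then gives
\[
\TP^{(2+s,1)}(\gamma)\geq L^{1-s}\pi^{1+s}\int_0^1\sin(\pi w)^{-s}\dd w.
\]
Substituting, the total power of $L$ is $1-p+p(1-s)=1-ps$, which matches the claimed bound. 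The only loose end is the constant $c_s$: either it is absorbed into the paper's normalization or the statement is read with $c_s=1$. I would check this against the definition in the cited work.

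\textbf{Equality case.} If $E$ is a disk, then $H$ is constant by symmetry, so Jensen is an equality, and Theorem~\ref{thm: main theorem TP} is an equality for circles. Conversely, if equality holds, then in particular $\TP^{(2+s,1)}(\gamma)$ attains the lower bound, since both inequalities in the chain must be equalities. Theorem~\ref{thm: UniquenessTheoremTP}(iii), with $q=1$ and $p=2+s\in(2,3)$ for a convex curve, then implies that $\Image(\gamma)$ is a circle, so $E$ is a disk.

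\textbf{Main obstacle.} The delicate step is justifying the sign identity and the regularity needed for it. A general convex set has a Lipschitz boundary whose normal exists only a.e., so I must confirm that the a.e.\ identity suffices. I also need to exclude degenerate convex sets such as segments, which have no interior; for these I would argue that they are excluded by the setting or have infinite energy. For the identity itself, I would use the supporting-line characterization of convexity: $E$ lies in the closed half-plane $\{z:\inner{n_\gamma(y),z-\gamma(y)}\geq 0\}$ wherever $\gamma'(y)$ exists.
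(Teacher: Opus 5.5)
Your proposal is correct and follows the paper's proof. Convexity turns the nonlocal mean curvature into the $\TP^{(2+s,1)}$ integrand, Jensen for $p\geq 1$ gives $\mathcal{W}_{s,p}\geq L^{1-p}\,(c_s\TP^{(2+s,1)})^p$, and the convex-curve case $q=1$, $2+s\in(2,3)$ of the sharp bound and its rigidity statement finishes the argument; the only cosmetic difference is that the paper first rescales to length $1$ via $\mathcal{W}_{s,p}(\lambda\gamma)=\lambda^{1-sp}\mathcal{W}_{s,p}(\gamma)$ rather than carrying $L$ through Jensen. Your flag about $c_s$ is justified, since the paper's proof also silently drops this constant, but note that a curve in $W^{1,\infty}_{\mathrm{i,a}}$ on $\Domain=\R/\Z$ has length $1$, so for general $L$ you should speak of a constant-speed parametrization.
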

Theorems~\ref{thm: main theorem TP} and~\ref{thm: UniquenessTheoremTP} rely on what we call a \textit{Fenchel theorem for Gauss maps}.
For $\gamma\in W^{1,1}_{\mathrm{i,r}}(\Domain, \R^n)$, we define the Gauss map of $\gamma$ as
\begin{equation}\label{eq:DefGaussMap}
    \GaussMap_{\gamma}(u,w)
    \ceq
    \frac{\gamma(u+w)-\gamma(u)}{\abs{\gamma(u+w)-\gamma(u)}}
    .
\end{equation}
Fixing $w=w_0>0$ and varying $u$ can be interpreted as fixing a distance in the parametrization of the curve and investigating the direction of the secant.
Since $\GaussMap_{\gamma}(u,w)$ is an approximation of the unit tangent, a lower bound on the length of $u\mapsto \GaussMap_\gamma(u,w)$
can be interpreted as an approximation of Fenchel's Theorem about the total curvature; see~\cite[\S~5.7, Theorem~3]{DoCarmo-DiffGeoCurvesAndSurfaces} for the standard Fenchel Theorem.
\begin{theorem}[Fenchel Theorem for Gauss Maps]\label{thm: Approximate Fenchel Theorem}
    Let $\gamma\in W^{1,1}_{\mathrm{i,r}}(\Domain, \R^n)$.
    Then the following holds.
    \[
    	\llap{(i) \hspace{3.5cm}}
        \int_0^1 \abs{\partial_u \GaussMap_\gamma(u,w)}\dd u \geq 2\pi 
        \;\text{ for all } w\in \intervaloo{0,1}, 
    \]
    with equality for all $w$ if and only if $\gamma$ is convex.
    \[
    \llap{(ii) \hspace{3.3cm}}
        \int_0^1 \abs{\partial_w \GaussMap_\gamma(u,w)}\dd w\geq \pi
        \;\text{ for almost all }u\in \Domain,
    \]
    with equality for almost all $u$ if and only if $\gamma$ is convex.
\end{theorem}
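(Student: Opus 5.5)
For part (i), I will fix $w\in(0,1)$ and study the closed curve $u\mapsto \GaussMap_\gamma(u,w)$ on $\mathbb{S}^{n-1}$. Injectivity of $\gamma$ gives $\gamma(u+w)\neq\gamma(u)$, so this curve is well defined and absolutely continuous. Its shift by $w$ has a useful symmetry: $\GaussMap_\gamma(u+w,1-w) = -\GaussMap_\gamma(u,w)$, because $\gamma(u+1)=\gamma(u)$.

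The key step is to show that the image of $u\mapsto\GaussMap_\gamma(u,w)$ is not contained in any open hemisphere. A closed curve on the sphere meeting every closed hemisphere has length at least $2\pi$; this is the standard lemma behind Fenchel's theorem (Horn's lemma / the Crofton argument).

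To establish the hemisphere property, take any unit vector $e$ and consider $f(u)=\langle \gamma(u+w)-\gamma(u),e\rangle$. Since $\gamma$ is periodic,
\[
\int_0^1 f(u)\,\dd u = 0.
\]
So $f$ either vanishes identically or changes sign. Hence $\langle \GaussMap_\gamma(\cdot,w),e\rangle$ takes a value $\ge0$ and a value $\le0$, and the image meets every closed hemisphere. The length bound then follows from the spherical lemma: for a closed rectifiable curve of length $<2\pi$, the midpoint construction places it inside an open hemisphere. I would cite or briefly reprove this lemma.

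For the equality case of (i), I first handle the direction \enquote{convex $\Rightarrow$ equality}. For a planar convex curve, the secant direction $u\mapsto\GaussMap_\gamma(u,w)$ is monotone in angle, since it rotates in one direction as both endpoints move forward along the convex boundary. Its total rotation is one full turn, so the length is exactly $2\pi$.

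For the converse, I plan to use equality for all $w$ and let $w\to0$. For a.e.\ $u$, $\GaussMap_\gamma(u,w)\to\gamma'(u)/\abs{\gamma'(u)}$. Lower semicontinuity of length under this convergence alone only gives total curvature $\le 2\pi$. What I really want is $2\pi$ together with rigidity. A cleaner route uses equality in the spherical lemma: a curve meeting every closed hemisphere with length exactly $2\pi$ must be a great circle, or two antipodal-connecting great semicircle arcs. From this I deduce that $\gamma(u+w)-\gamma(u)$ lies in a fixed 2-plane and winds monotonically. Doing this for all $w$ forces $\gamma$ to be planar and convex; for the latter, I would show that every chord keeps the curve on one side.

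For part (ii), fix $u$ and consider $w\mapsto\GaussMap_\gamma(u,w)$ on $(0,1)$. As $w\to0^+$ it tends to $\tau(u)=\gamma'(u)/\abs{\gamma'(u)}$, and as $w\to1^-$ it tends to $-\tau(u)$; this holds for a.e.\ $u$ (Lebesgue points where $\gamma$ is differentiable with nonzero derivative). The curve therefore joins antipodal points, so its spherical length is at least $\dist[\mathbb{S}^{n-1}][\tau][-\tau]=\pi$. For equality at a.e.\ $u$, the path must be a minimizing geodesic traversed monotonically. That means the points $\gamma(u+w)$ lie in a half-plane through $\gamma(u)$ spanned by $\tau(u)$ and a fixed normal, with the secant angle increasing monotonically. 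This angular monotonicity from a.e.\ base point is exactly the star-shapedness-from-every-boundary-point characterization of convexity. Conversely, for a convex curve the angle from a boundary point sweeps monotonically through $\pi$.

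I expect the main obstacle to be the rigidity directions, namely deriving convexity from equality. The difficulties are regularity (only $W^{1,1}$ and a.e.\ statements) and passing from the spherical equality cases to planarity. I would first prove planarity by showing that all $\GaussMap_\gamma(u,w)$ lie in one great circle, and then convexity via the monotone-angle criterion.
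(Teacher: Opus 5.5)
Your route is the paper's. In (i), the mean-zero identity $\int_\Domain(\gamma(u+w)-\gamma(u))\,\dd u=0$ keeps $u\mapsto\GaussMap_\gamma(u,w)$ out of every open hemisphere, and the Fenchel (Horn) lemma then gives $2\pi$. In (ii), the path $w\mapsto\GaussMap_\gamma(u,w)$ joins the antipodal points $\pm\gamma'(u)/\abs{\gamma'(u)}$. The two inequalities and both ``convex $\Rightarrow$ equality'' directions are fine. Your rigidity argument for (ii) is also fine, and slightly lighter than the paper's compactness argument in \aref{lem:Eta convex to great half circle}: a single good $u$ already puts $\Image(\gamma)$ into the affine plane $\gamma(u)+\mathrm{span}(\tau(u),\nu(u))$, and supporting lines at a dense set of points give convexity.

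The gap is in the rigidity of (i), at exactly the case you list yourself. Suppose the equality curve is a lune, i.e.\ two great semicircles $\{\cos\phi\,p+\sin\phi\,v_i:\phi\in[0,\pi]\}$, $i=1,2$, with $v_1\neq -v_2$ (this includes $v_1=v_2$, a semicircle traversed back and forth). Then the chords $\gamma(u+w)-\gamma(u)$ do not lie in one $2$-plane, so your next sentence does not follow, and this case must be excluded. Your own identity excludes it once you keep its positive weight. Take $m=(v_1+v_2)/\abs{v_1+v_2}$; on the $i$-th semicircle, $\inner{\GaussMap_\gamma(u,w),m}=\sin\phi\,\inner{v_i,m}\geq 0$, with strict inequality on a nonempty open set of $u$. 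This contradicts
\[
\int_\Domain \abs{\gamma(u+w)-\gamma(u)}\,\inner{\GaussMap_\gamma(u,w),m}\,\dd u=0 .
\]
So only a great circle $P_w\cap\mathbb{S}^{n-1}$, traversed once and monotonically, survives. The paper's proof of \aref{lem:LengthOfGaussMapinU} asserts this great-circle conclusion directly, so the same remark is needed there.

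After that, two steps are still only announced. Planarity needs just one irrational $w$: $\gamma(kw)-\gamma(0)$ is a telescoping sum of chords in $P_w$, and $\{kw \bmod 1\}$ is dense, so $\Image(\gamma)\subset\gamma(0)+P_w$. For convexity, ``every chord keeps the curve on one side'' is false as stated, since the line through two points of a convex curve has the two complementary arcs on opposite sides. What you need are supporting lines. The paper gets them from the limit $w\searrow 0$ that you abandoned, applied to monotonicity instead of length. The continuous angle functions of $\GaussMap_\gamma(\cdot,w)$ are monotone and converge a.e.\ to the tangent angle, so the tangent angle is monotone and $\gamma$ is convex (\aref{lem:RhoGreatCircleMonotonicAngleGammaConvex}).
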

Because we do not require $\gamma$ to be parametrized by arc length, the quantity $\GaussMap_\gamma$ cannot be considered purely geometric.
We stick to the more involved quantity $\GaussMap$, because on the one hand, the lower bounds naturally hold true for a larger class of curves. 
On the other hand, as seen in \aref{lem: constant derivative of GaussMap}, the property $\abs{\partial_u \GaussMap_\gamma} \equiv 2\pi$ implies additional regularity.
\\
The connection between $\TP^{(p,q)}$ and $\PathLength(w\mapsto \GaussMap_\gamma(u,w))$ was observed by S. Blatt in~\cite[\S 5]{volkmann1} as
\begin{equation}\label{eq:DerivGaussMapW}
    \abs{\partial_w \GaussMap_\gamma(u,w)}
    =\frac{\abs{P^\perp_{\GaussMap_\gamma}\gamma'(u+w)}}{\abs{\gamma(u+w)-\gamma(u)}}
    =\frac{\abs{\gamma'(u+w)}}{2\mathrm{r}_{\mathrm{TP}}(\gamma)(u+w,u)}
    .
\end{equation}
A key insight for our analysis here, is to consider the derivative with respect to $u$ as well, namely
\begin{equation}\label{eq:DerivGaussMapU}
    \abs{\partial_u \GaussMap_\gamma(u,w)}=
    \frac{\abs{P^\perp_{\GaussMap_\gamma}(\gamma'(u+w)-\gamma'(u))}}{\abs{\gamma(u+w)-\gamma(u)}}
    .
\end{equation}
We proceed as follows.
The lower bounds in~\aref{thm: Approximate Fenchel Theorem} (i) and (ii) are derived in~\aref{sec:WirtingerAndLengthBounds} in Lemmas~\ref{lem:LengthOfGaussMapinW} and Lemmas~\ref{lem:LengthOfGaussMapinU}, respectively. 
In~\aref{sec:RelFunct}, we provide a collection of functionals, which, as a consequence of \aref{thm: Approximate Fenchel Theorem}, are minimized by convex curves or circles.
We dedicate \aref{sec:MinMostTPE} to finding suitable minorants for the tangent-point energies, which are special cases of the functionals treated in~\aref{sec:RelFunct}. 
Theorems~\ref{thm: main theorem TP} and~\ref{thm: UniquenessTheoremTP} then follow as a combination of \aref{thm: circle min p geq 2q} and \aref{thm: circle minimizer for p>=2q-2} by using the invariance under reparametrization of $\TP^{(p,q)}$ and  homogeneity, i.e. $\TP^{(p,q)}(\lambda \gamma)= \lambda^{q+2-p}\TP^{(p,q)}(\gamma)$.
In~\aref{sec:NonlocalWillmore}, we prove \aref{thm:NonlocalWillmoreMinByCircle} by providing suitable minorants in terms of certain tangent-point functionals.

%% file: Prelim.tex
\section{Preliminaries}
This chapter is dedicated to collecting and deriving some fundamental inequalities, which are needed in the following.
The first three inequalities have been used in~\cite{ABRAMS2003381}, where the authors minimize most O'Hara energies.
There, the authors investigate functionals of the form 
\begin{equation}
    f(c)
    =
        \iint_{\Domain\times \Domain}
            F(\abs{c(s)-c(t)},\dist[\Domain](s,t))
        \dd(s,t),
\end{equation}
where $c:\Domain \rightarrow \R^n$ is a curve, parametrized by arc length.
Their theorem, put into our notation, reads as follows.
\begin{theorem}[Abrams, Cantarella, Fu, Ghomi, Howard,~\cite{ABRAMS2003381}]
    If $F:\R^2 \rightarrow \R$ is convex and decreasing in $(x_1)^2\in (0, x_2^2)$ for $x_2\in \intervaloo{0,\frac{1}{2}}$,
    then $f$ is uniquely minimized by the round circle among all closed, unit-speed curves.
\end{theorem}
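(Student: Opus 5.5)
The plan is to reduce everything to a pointwise-in-$d$ comparison of mean squared chord lengths, which I would prove by Fourier analysis (a Wirtinger-type argument), and then to pass to $F$ by Jensen's inequality and monotonicity. By scaling, I assume the curves have length $1$, so $c:\Domain\to\R^n$ with $\abs{c'}=1$, and I write $d=\dist[\Domain](s,t)\in\intervalcc{0,\frac12}$. I read the hypothesis as follows: $G(y,d)\ceq F(\sqrt{y},d)$ is convex and decreasing in $y\in\intervaloo{0,d^2}$ for every fixed $d\in\intervaloo{0,\frac12}$. The relevant range of $y$ is $(0,d^2)$ because $\abs{c(s)-c(t)}\le d$ for a unit-speed curve. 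Substituting $t=s+d$ (and using the symmetry $d\mapsto -d$), we get $f(c)=2\int_0^{1/2}\int_\Domain G(\abs{c(s+d)-c(s)}^2,d)\dd s\dd d$.

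\textbf{Step 1: Fourier bound on the mean squared chord.} Expand $c(s)=\sum_{k\in\Z}a_k e^{2\pi i k s}$ with $a_k\in\mathbb{C}^n$ and $a_{-k}=\overline{a_k}$. The unit-speed condition together with Parseval gives $\sum_k 4\pi^2k^2\abs{a_k}^2=\int_\Domain\abs{c'}^2=1$. Parseval also gives
\[
\int_\Domain\abs{c(s+d)-c(s)}^2\dd s=\sum_k\abs{a_k}^2\abs{e^{2\pi i k d}-1}^2=\sum_{k\neq0}4\abs{a_k}^2\sin^2(\pi k d).
\]
Next I use the elementary inequality $\abs{\sin(kx)}\le \abs{k}\abs{\sin x}$, proved by induction via the addition formula. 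It gives $\sin^2(\pi k d)\le k^2\sin^2(\pi d)$. Inserting this, the mean squared chord is at most $\sin^2(\pi d)/\pi^2$, which is exactly the squared chord of the unit-length circle. Moreover, for $\abs{k}\ge2$ and $d\in\intervaloo{0,\frac12}$ the inequality is strict, since $\sin(\pi d)\neq0$ and equality in the induction would force $\abs{\cos}=1$ at intermediate steps.

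\textbf{Step 2: Jensen and monotonicity.} For fixed $d$, convexity of $G(\cdot,d)$ and Jensen's inequality give
\[
\int_\Domain G(\abs{c(s+d)-c(s)}^2,d)\dd s\ \ge\ G\Bigl(\int_\Domain\abs{c(s+d)-c(s)}^2\dd s,\,d\Bigr).
\]
Since the mean in Step 1 is at most $\sin^2(\pi d)/\pi^2$ and $G(\cdot,d)$ is decreasing, the right-hand side is at least $G(\sin^2(\pi d)/\pi^2,d)$, which is the circle's integrand. Integrating over $d$ yields $f(c)\ge f(\text{circle})$.

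\textbf{Step 3: Uniqueness (the main obstacle).} If $f(c)=f(\text{circle})$, then for almost every $d$ both inequalities in Step 2 are equalities. To extract rigidity from this I need some strictness in the monotonicity; here the hypothesis has to be read as \emph{strictly} decreasing, and this is the point I expect to need care. With strict monotonicity, equality forces the mean squared chord to equal $\sin^2(\pi d)/\pi^2$ for almost every $d$. By the strictness observed in Step 1, this forces $a_k=0$ for all $\abs{k}\ge2$. Hence $c(s)=a_0+2\,\mathrm{Re}(a_1e^{2\pi i s})$, which is an ellipse. The unit-speed condition $\abs{c'}\equiv1$ then forces $\mathrm{Re}\,a_1\perp\mathrm{Im}\,a_1$ with equal lengths, so $c$ is a round circle.

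If only convexity is strict rather than monotonicity, I would instead use equality in Jensen's inequality. It gives that $\abs{c(s+d)-c(s)}$ is constant in $s$ for almost every $d$. Then I would argue from constant chord lengths, again through the Fourier coefficients, that the curve is a circle; this alternative is the more delicate route.
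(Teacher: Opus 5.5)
Your argument is essentially the original one of \cite{ABRAMS2003381}, which the paper only cites without proof. It consists of a Fourier proof of the Wirtinger-type inequality \aref{thm: Wirtinger type inequality}, followed by Jensen and monotonicity for each fixed arc distance $d$. For fixed $d$ this is essentially the argument of \aref{lem: circle max of Delta gamma |^r} with $g=-G(\cdot,d)$. Steps 1 and 3 are sound. You are right that uniqueness needs strictness, and right that equality in the Fourier bound only gives an ellipse, so it is the unit-speed condition that makes it round.

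One step, however, fails under the reading you adopt. You say the relevant range of $y$ is $(0,d^2)$, but $\abs{c(s+d)-c(s)}\le d$ only gives $\intervalcc{0,d^2}$. The value $y=d^2$ is attained exactly when $c$ is a straight segment on $\intervalcc{s,s+d}$. For any curve containing a segment, this happens on a set $S$ of pairs $(s,d)$ of positive measure. The endpoint $y=0$ is harmless: since $\abs{c'}=1$ a.e., the set $\{d : c(s+d)=c(s)\}$ is null for every $s$.

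So Jensen in Step 2 needs convexity of $G(\cdot,d)$ on $\intervaloc{0,d^2}$. The open-interval hypothesis does not give this, because it leaves $G(d^2,d)$ unconstrained, and the statement then actually fails. Take $G(y,d)=-y$ for $y<d^2$ and $G(d^2,d)=-M$; this $G(\cdot,d)$ is convex and even strictly decreasing on $(0,d^2)$. Every curve then satisfies $f(c)\le -2M\abs{S}$. For a stadium-shaped curve and $M$ large, this is below $f(\text{circle})=-1/(2\pi^2)$.

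Hence, just as you did for strictness, you must read the hypothesis on $\intervaloc{0,d^2}$. For Jensen it suffices that $G(d^2,d)\ge\lim_{y\uparrow d^2}G(y,d)$, which holds for instance if $F$ is continuous. With that reading, Step 2 goes through unchanged.

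Two smaller remarks. First, your fallback route via strict convexity is unnecessary. A non-increasing, strictly convex function cannot be constant on any subinterval, so it is strictly decreasing, and that case is already covered by your main argument. Second, uniqueness tacitly requires $f(\text{circle})<\infty$. Only then does equality of the $d$-integrals force equality in both inequalities for almost every $d$.
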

A central ingredient of their analysis is a Wirtinger-type inequality.
It states the following.
\begin{theorem}[Theorem $5$,~\cite{ABRAMS2003381}]\label{thm: Wirtinger type inequality}
   Let $c:\Circle:= \mathbb{R}/2 \pi \mathbb{Z} \rightarrow \mathbb{R}^n$ be an absolutely continuous function. If $c^\prime(t)\in \Lebesgue[2]$, then for any $s \in \mathbb{R}$
   \begin{equation}
        \int_{\Circle}
            \abs{c(t+s)-c(t)}^2 
        \dd t 
        \leq 
            (
                2\sin(\frac{s}{2})
            )^2
            \int_{\Circle}
                \abs{c^\prime (t)}^2 
            \dd t
   \end{equation}
   with equality if and only if $s$ is an integral multiple of $2 \pi$ or $\gamma$ is a circle, parametrized by unit speed.
\end{theorem}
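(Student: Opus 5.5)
The plan is a Fourier-series argument, as in the classical proof of Wirtinger's inequality. First, by rescaling time, we may work on $\Circle=\R/2\pi\Z$. Since $c$ is absolutely continuous with $c'\in \Lebesgue[2]$, we have $c\in W^{1,2}(\Circle,\R^n)$. Expand componentwise
\[
c(t)=\sum_{k\in\Z} a_k e^{ikt}, \qquad a_k\in\mathbb{C}^n .
\]
Then $c'$ has coefficients $ik\,a_k$, and Parseval gives
\[
\int_{\Circle}\abs{c'}^2\dd t=2\pi\sum_k k^2\abs{a_k}^2 .
\]
For the shifted difference $c(t+s)-c(t)$ the coefficients are $(e^{iks}-1)a_k$. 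Since $\abs{e^{iks}-1}^2=4\sin^2(ks/2)$, we get
\[
\int_{\Circle}\abs{c(t+s)-c(t)}^2\dd t=2\pi\sum_k 4\sin^2\!\big(\tfrac{ks}{2}\big)\abs{a_k}^2 .
\]

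The inequality therefore reduces to the elementary scalar bound
\[
\abs{\sin(k x)}\le \abs{k}\,\abs{\sin x}\quad\text{for all }k\in\Z,\ x=s/2 .
\]
This follows by induction on $k$ from the addition formula
\[
\abs{\sin((k+1)x)}\le \abs{\sin(kx)}\abs{\cos x}+\abs{\cos(kx)}\abs{\sin x}\le (k+1)\abs{\sin x}.
\]
Multiplying by $4$, squaring and summing termwise yields the claimed inequality.

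For the equality case, the termwise inequalities must all be equalities wherever $a_k\neq0$. If $\sin(s/2)=0$, i.e. $s\in2\pi\Z$, both sides vanish, so equality holds trivially. Otherwise we need equality in $\abs{\sin(kx)}\le\abs{k}\abs{\sin x}$. This holds for $k=0$ and $\abs{k}=1$. For $\abs{k}\ge2$ the induction step forces $\abs{\cos x}=1$ together with equality at the previous stage, hence $\sin x=0$, a contradiction. So $a_k=0$ for $\abs{k}\ge2$, and
\[
c(t)=a_0+a_1e^{it}+\overline{a_1}e^{-it}
\]
(using that $c$ is real), i.e. $c(t)=a_0+u\cos t+v\sin t$ with $u,v\in\R^n$.

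The main obstacle is translating this Fourier conclusion into the stated geometric equality case. Here I would take \enquote{circle} in the same sense as in \cite{ABRAMS2003381}: any curve of the form $a_0+u\cos t+v\sin t$, which in the setting where the theorem is applied (unit-speed curves) has $\abs{u}=\abs{v}$ and $u\perp v$. In that setting it is a round circle traversed at constant speed. In general it is an ellipse parametrized linearly in angle, possibly degenerate. Conversely, any such curve attains equality by the same computation, since only the $\abs{k}\le 1$ coefficients are nonzero. Finally, the rigorous justification of Parseval needs only $c\in \Lebesgue[2]$ and $c'\in\Lebesgue[2]$, both given by the hypotheses.
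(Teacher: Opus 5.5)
Your Fourier-series proof is correct. Parseval is justified because $c$ is absolutely continuous and $c,c'\in L^2$. The termwise bound $4\sin^2(ks/2)\le 4k^2\sin^2(s/2)$ gives the inequality. For $s\notin 2\pi\Z$ that bound is strict for $\abs{k}\ge 2$, so equality forces $c(t)=a_0+u\cos t+v\sin t$. The paper does not prove this statement; it quotes it from \cite{ABRAMS2003381}. Your argument is the classical proof of this Wirtinger-type inequality, so there is no competing route to compare.

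Your equality analysis does add something: it exposes an imprecision in the equality clause as quoted. Read literally, ``circle, parametrized by unit speed'' fails in both directions. The ellipse $(2\cos t,\sin t)$ and the circle $r(\cos t,\sin t)$ with $r\neq 1$ both attain equality. Conversely, the unit-speed, doubly covered circle $\tfrac12(\cos 2t,\sin 2t)$ does not attain it, since its $k=\pm 2$ coefficients are nonzero.

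Rather than redefining ``circle'', it is cleaner to state the equality case exactly as you derived it: $a_k=0$ for $\abs{k}\ge 2$. Then add that if $\abs{c'}$ is constant, the identity $\abs{c'}^2=\tfrac12(\abs{u}^2+\abs{v}^2)+\tfrac12(\abs{v}^2-\abs{u}^2)\cos 2t-\inner{u,v}\sin 2t$ forces $\abs{u}=\abs{v}$ and $u\perp v$. That is, $c$ is a round circle traversed once at constant speed.

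This is precisely the form the paper needs. It applies the result to injective arc-length parametrized curves on $\R/\Z$, which after rescaling to $\R/2\pi\Z$ have constant speed $1/(2\pi)$, not unit speed.
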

We may apply \aref{thm: Wirtinger type inequality} to arc length parametrized curves, as done similarly in~\cite{ABRAMS2003381}, and extend the statement to lower exponents.
\begin{lemma}\label{lem: circle max of Delta gamma |^r}
    Let $w\in \intervaloo{0,1}$, $\gamma \in W^{1,\infty}_{\mathrm{i,a}}(\Domain, \R^n)$ and $g:[0,\frac{1}{2}]\rightarrow\intervalco{0,\infty}$ be concave and strictly monotonically increasing.
    Then 
    \begin{equation}
        \int_{\Domain}
            g(\abs{\gamma(u+w)-\gamma(u)}^2)
        \dd u
        \leq
        g(
            \frac{\sin(\pi w)^2}{\pi^2}
        )
    \end{equation}
    with equality iff $\Image(\gamma)$ is a circle.
\end{lemma}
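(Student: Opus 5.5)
Since $\gamma\in W^{1,\infty}_{\mathrm{i,a}}(\Domain,\R^n)$ has unit speed on $\Domain=\R/\Z$, its length is $1$. The plan is to combine Jensen's inequality for the concave function $g$ with the Wirtinger-type inequality of \aref{thm: Wirtinger type inequality}, after rescaling $\gamma$ to the circle $\Circle=\R/2\pi\Z$.

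\emph{Rescaling.} Define $c:\Circle\to\R^n$ by $c(t)\ceq\gamma(t/(2\pi))$. Then $c$ is absolutely continuous with $\abs{c'}\equiv\frac{1}{2\pi}$, so $c'\in\Lebesgue[2]$ and
\[
\int_{\Circle}\abs{c'(t)}^2\dd t=\frac{1}{2\pi}.
\]
Apply \aref{thm: Wirtinger type inequality} with $s=2\pi w$. Substituting $t=2\pi u$ gives
\[
2\pi\int_\Domain\abs{\gamma(u+w)-\gamma(u)}^2\dd u\leq 4\sin(\pi w)^2\cdot\frac{1}{2\pi},
\]
that is,
\[
\int_\Domain\abs{\gamma(u+w)-\gamma(u)}^2\dd u\leq\frac{\sin(\pi w)^2}{\pi^2}.
\]

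\emph{Jensen.} Note that $\abs{\gamma(u+w)-\gamma(u)}\leq\dist[\Domain](u,u+w)\leq\frac12$, so the argument of $g$ lies in $[0,\frac14]\subset[0,\frac12]$ and $g$ is defined there. The measure $\dd u$ on $\Domain$ is a probability measure and $g$ is concave, so Jensen's inequality gives
\[
\int_\Domain g(\abs{\gamma(u+w)-\gamma(u)}^2)\dd u\leq g\Bigl(\int_\Domain\abs{\gamma(u+w)-\gamma(u)}^2\dd u\Bigr).
\]
Since $g$ is increasing, the Wirtinger bound then yields
\[
\int_\Domain g(\abs{\gamma(u+w)-\gamma(u)}^2)\dd u\leq g\Bigl(\frac{\sin(\pi w)^2}{\pi^2}\Bigr).
\]
The value $\frac{\sin(\pi w)^2}{\pi^2}\leq\frac{1}{\pi^2}$ also lies in the domain of $g$.

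\emph{Equality.} Suppose equality holds. Because $g$ is strictly increasing, equality in the last step forces equality in the Wirtinger inequality. Since $w\in\intervaloo{0,1}$, the shift $s=2\pi w$ is not an integer multiple of $2\pi$. The equality case of \aref{thm: Wirtinger type inequality} therefore says that $c$ is a circle traversed at constant speed, so $\Image(\gamma)$ is a circle. Jensen's inequality need not be strict, because $g$ may be affine, but we do not need its strictness: strictness in Wirtinger alone suffices.

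Conversely, suppose $\Image(\gamma)$ is a circle. Since $\gamma$ is injective with unit speed and length $1$, it traverses a circle of radius $\frac{1}{2\pi}$ exactly once at constant speed. Hence $\abs{\gamma(u+w)-\gamma(u)}=\frac{\sin(\pi w)}{\pi}$ for every $u$, the integrand is constant, and equality holds.

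The main point to check is the equality case, specifically how the equality clause of \aref{thm: Wirtinger type inequality} is to be read. We take "$c$ is a circle" to mean a possibly rescaled circle traversed at constant speed; here that speed is $\frac{1}{2\pi}$ rather than $1$. The precise statement in~\cite{ABRAMS2003381} should be checked for the normalization (unit versus constant speed), but since the rescaling of $c$ is explicit, either reading yields that $\Image(\gamma)$ is a circle.
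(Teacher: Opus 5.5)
Your proposal is correct and follows the paper's proof. The Wirtinger-type inequality, which you rescale explicitly to $\R/2\pi\Z$, gives $\int_\Domain\abs{\gamma(u+w)-\gamma(u)}^2\dd u\le \sin(\pi w)^2/\pi^2$; Jensen for the concave $g$ and monotonicity then finish, and you correctly take the equality case from strict monotonicity of $g$ and the rigidity in Wirtinger rather than from strict concavity.

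On the point you flagged: for a general $c$, equality in the (Fourier-series) Wirtinger inequality only forces $c(t)=a_0+a_1\cos t+b_1\sin t$, which is an ellipse. It is your constant speed $\abs{c'}\equiv\frac{1}{2\pi}$ that forces $\abs{a_1}=\abs{b_1}$ and $a_1\perp b_1$, so your conclusion that $\Image(\gamma)$ is a circle stands.
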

\begin{proof}
    Let $\gamma\in W^{1,\infty}_{\mathrm{i,a}}$.
    We use \aref{thm: Wirtinger type inequality} and conclude that
    \[
        \int_{\Domain}
            \abs{\gamma(u+w)-\gamma(u)}^2
        \dd t
        \leq
        \frac{\sin(\pi w)^2}{\pi ^2}
    \]
    with equality iff $\Image(\gamma)$ is a circle.
    The claim now follows by applying Jensen's inequality for concave functions onto $g$.
\end{proof}
We now prove a version of the above for functions, mapping to the unit sphere.
\begin{lemma}\label{lem:WirtingerMapsOnSphere}
    Let $f\in W^{1,1}(\Domain, \mathbb{S}^{n-1})$ with $\PathLength(f)\leq 2\pi$ and $w\in \intervaloo{0,1}$.
    Then
    \[
        \int_\Domain
            \abs{f(u+w)-f(u)}
        \dd u 
        \leq
        2\sin ( \pi w)
        ,
    \]
    with equality iff $\Image(f)$ is a great circle and $\abs{f'(u)}=2\pi$ for almost every $u\in \Domain$.
\end{lemma}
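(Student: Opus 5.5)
The plan is to bound the chord $\abs{f(u+w)-f(u)}$ by the geodesic distance on the sphere and then use a one-dimensional rearrangement/concavity argument in terms of the arc length of $f$. Write $\ell(u,w)\ceq\int_u^{u+w}\abs{f'(t)}\dd t$ for the length of $f$ on $\intervalcc{u,u+w}$. The chord on $\mathbb{S}^{n-1}$ is controlled by the geodesic distance, which in turn is at most the arc length, so $\abs{f(u+w)-f(u)} = 2\sin\bigl(\tfrac12 \dist[\mathbb{S}^{n-1}][f(u)][f(u+w)]\bigr)$. The map $\theta\mapsto 2\sin(\theta/2)$ is increasing only on $\intervalcc{0,\pi}$, so a symmetric bound is needed. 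The complementary arc from $u+w$ to $u+1$ has length $\PathLength(f)-\ell(u,w)$, hence
\[
\abs{f(u+w)-f(u)}\leq 2\sin\Bigl(\tfrac12\min\{\ell(u,w),\PathLength(f)-\ell(u,w)\}\Bigr)\leq 2\sin\Bigl(\tfrac12\min\{\ell(u,w),2\pi-\ell(u,w)\}\Bigr).
\]
The last inequality uses $\PathLength(f)\leq 2\pi$ and the fact that the geodesic distance is at most $\pi$. Define $h(\ell)\ceq 2\sin(\tfrac12\min\{\ell,2\pi-\ell\})=2\sin(\ell/2)$ for $\ell\in\intervalcc{0,2\pi}$. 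It is concave on $\intervalcc{0,2\pi}$, and $\ell(u,w)\in\intervalcc{0,2\pi}$.

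Next, apply Jensen's inequality in $u$:
\[
\int_\Domain \abs{f(u+w)-f(u)}\dd u\leq \int_\Domain h(\ell(u,w))\dd u\leq h\Bigl(\int_\Domain \ell(u,w)\dd u\Bigr).
\]
By Fubini, $\int_\Domain \ell(u,w)\dd u = w\,\PathLength(f)\leq 2\pi w$. Since $h$ is not monotone on all of $\intervalcc{0,2\pi}$, this case needs care. I would symmetrize first: use $w$ or $1-w$, noting that $\abs{f(u+w)-f(u)}$ integrates to the same value for $1-w$ by the substitution $u\mapsto u+w$. So assume $w\leq\tfrac12$. Then the average length is at most $\pi$, where $h$ is increasing, and we obtain $h(2\pi w)=2\sin(\pi w)$.

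For the equality case, equality in the concave Jensen step with $h$ strictly concave forces $\ell(u,w)$ to be constant a.e. in $u$. Equality in the monotonicity step forces $\PathLength(f)=2\pi$ (when $w\neq\tfrac12$; at $w=\tfrac12$ one uses $\min$ differently and must argue separately). Constancy of $\ell(u,w)=\int_u^{u+w}\abs{f'}$ in $u$ gives $\abs{f'(u+w)}=\abs{f'(u)}$ a.e. Equality in the chord-versus-arc step forces each arc of $f$ of length $2\pi w<\pi$ to be a minimizing geodesic segment. From this, $f$ traverses great-circle arcs with no turning, hence $\Image(f)$ is a great circle of length $2\pi$ traversed once. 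Speed constancy follows as well: the segments being geodesics of fixed length $2\pi w$ starting at each point, together with $\abs{f'(u+w)}=\abs{f'(u)}$, give $\abs{f'}\equiv 2\pi$, after possibly using several $w$ or the full great-circle structure. The converse is a direct computation.

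The main obstacle is the equality case. Deriving $\abs{f'}\equiv2\pi$ from constancy of $\ell(\cdot,w)$ for a single fixed $w$ is delicate, for instance when $w$ is rational. I would try first to exploit that, once $\Image(f)$ is known to be a great circle, the chord equals $2\sin$ of the angular increment exactly. The Jensen equality then makes the angular increment over every window of parameter length $w$ equal to $2\pi w$. Combined with monotonicity of the angle, which follows from the no-backtracking geodesic condition, this pins down the parametrization.
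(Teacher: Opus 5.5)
Your proof of the inequality is correct and is essentially the paper's argument. Both write the chord as $2\sin(\tfrac12\varrho_{\mathbb{S}^{n-1}})$, bound the geodesic distance by the arc length (and by $\pi$), apply Jensen for the concave sine, use Fubini to get $w\,\PathLength(f)$, and reduce to $w\le\tfrac12$. The paper works with $w\in[-\tfrac12,\tfrac12]\setminus\{0\}$ and applies Jensen to $\lambda\circ\varrho_{\mathbb{S}^{n-1}}$ before bounding the distance, whereas you bound pointwise first; the difference is cosmetic. Also, $w=\tfrac12$ needs no separate treatment in the monotonicity step, since $s\mapsto 2\sin(s/2)$ is strictly increasing on all of $[0,\pi]$.

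The gap is the final step of the equality case, and it cannot be closed: for rational $w$ the ``only if'' direction of the statement is false. Your argument does establish the following: $\Image(f)$ is a great circle traversed once with monotone angle $\theta$, $\PathLength(f)=2\pi$, and $\int_u^{u+w}\abs{f'}\dd t=2\pi w$ for all $u$. Writing $\theta(u)=2\pi u+\psi(u)$, this says exactly that $\psi$ is both $1$-periodic and $w$-periodic. For irrational $w$, density of $\{kw \bmod 1\}$ and continuity force $\psi$ to be constant, so $\abs{f'}=2\pi$ a.e.

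For $w=k/m$ this fails. Take $f(u)=(\cos\theta(u),\sin\theta(u),0,\dots,0)$ with $\theta(u)=2\pi u+\varepsilon\sin(2\pi m u)$ and $0<\abs{\varepsilon}<1/m$. Then $\theta'>0$, $\PathLength(f)=2\pi$, and $\theta(u+w)-\theta(u)=2\pi w$. Hence $\abs{f(u+w)-f(u)}=2\sin(\pi w)$ for every $u$ and equality holds, yet $\abs{f'}=\theta'$ is not constant. This example satisfies exactly the conditions your closing strategy relies on (angular increment $2\pi w$ on every window, plus monotonicity), so that strategy cannot pin down the parametrization.

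The paper's proof does not resolve this either; it asserts the equality characterization in one line. The correct statement for fixed $w$ is: equality holds iff $\Image(f)$ is a great circle traversed once monotonically with $\int_u^{u+w}\abs{f'}\dd t=2\pi w$ for all $u$. This reduces to $\abs{f'}=2\pi$ a.e. when $w$ is irrational, or when equality holds for all $w$.
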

\begin{proof}
	We will prove the equivalent bound
	\[
		\int_\Domain
		    \abs{f(u+w)-f(u)}
		\dd u 
		\leq
		2\sin ( \pi \abs{w}) 
        \quad 
        \text{for } 
        w \in [- \frac{1}{2}, \frac{1}{2}] \setminus \{0 \}
        .
	\]
    Let $w \in [- \frac{1}{2}, \frac{1}{2}] \setminus \{0 \} $. 
    We can bound the distance $\dist[\mathbb{S}^n]$ on $\mathbb{S}^n$ of $f(u+w)$ and $f(u)$ by
    \[
    0 
    \leq 
        \dist[{\mathbb{S}^n}](f(u+w),f(u)) 
    \leq 
        \min \{ 
            \abs{w} 
            \int_0^{1} 
                \abs{f^\prime (u+ \theta w)} 
            \dd \theta, 
            \, \pi 
        \} 
    \leq 
        \pi.
    \]
    We define
    \[
        \lambda: [0,\pi] \rightarrow \mathbb{R} \, , 
        \quad \lambda (s) := 2 \, \sin ( \frac{s}{2})
    \]
    and observe, that one can relate $\abs{f(u+w)- f(u)}$ to their distance on the sphere by
    \[
        \abs{f(u+w)-f(u)}
        =
        \dist[\AmbSpace](f(u+w),f(u))
        = 
        \lambda \big( 
            \dist[{\mathbb{S}^n}](f(u+w),f(u))
        \big)
        .
    \]
    Since $\lambda$ is monotonically increasing and concave on $[0,\pi]$, 
    we can apply Jensen's inequality in order to estimate the integral over the secant distance.
    \begin{align*}
        \int _{\Domain} 
            \abs{f(u+w)-f(u) } 
        \dd u 
    &=
        \int _{\Domain}  
            \lambda \big( 
                \dist[{\mathbb{S}^n}] (f(u+w),f(u))
            \big) 
        \dd u
    \\
    &\leq
        \lambda \left( 
            \int _{\Domain}  
                \dist[\mathbb{S}^n](f(u+w),f(u))  
            \dd u 
            \right)
    \end{align*}
    Once again, the above terms are equal, if and only if $u\mapsto \dist[\mathbb{S}^n](f(u+w),f(u))$ is constant.
    Using that $\lambda$ is monotonically increasing, we conclude that
    \begin{align*}
        \int _{\Domain} 
            \abs{f(u+w)-f(u) } 
        \dd u 
    &\leq
        \lambda \pars*{
            \int _{\Domain} 
            \min \left\{
                \abs{w}\, 
                \int_0^{1} 
                    \abs{f^\prime (u+ \theta w)} 
                \dd \theta,
                 \, \pi 
            \right\}\, du 
        }
    \\
    &\leq
        \lambda \pars*{
            \min \left\{ 
                \int _{\Domain} 
                    \abs{w} 
                    \int_0^{1} 
                        \abs{f^\prime (u+ \theta w)} 
                    \dd \theta
                \dd u , 
                \int _{\Domain} \pi \dd u \right\} 
        }
    \\
    &=
        \lambda \left( 
            \min \left\{ 
                \abs{w} 
                \int_0^{1} 
                    \int _{\Domain}
                        \abs{f^\prime (u+ \theta w)} 
                    \dd u 
                \dd \theta, 
                \pi 
            \right\} 
        \right)
    \\
    &=
        \lambda \left( 
            \min \left\{ 
                \abs{w} 
                \PathLength(f),
                \pi \right\} 
        \right)
    \\
    &\leq
        \lambda ( 2 \pi \abs{w}) 
    = 
        2 \sin (\pi \abs{w})
    .
    \end{align*}
    The above is an equality if and only if $\abs{f^\prime(u)}=2\pi$ for all $u\in \Domain$ and $\Image(f)$ is a great circle.
\end{proof}
In order to prove critical estimates, we have to strengthen the before lemma to functions $[0,1]\rightarrow \mathbb{S}^1$ with monotonic angle function.
This can be interpreted as a $BV$ estimate.
\begin{lemma}\label{lem: max | Delta f| on S^1}
    Let $f:\Domain\rightarrow \mathbb{S}^1$ be injective and $w \in \intervaloo{0,1}$ be arbitrary.
    If one can parametrize $f$ by a monotonic angle function $\theta_f:[0,1]\rightarrow \R$, such that $\abs{\theta_f} \leq 2\pi$, then 
    \[
    	\int _{\Domain} 
    		\abs{ f(u+w) - f(u)} 
	\dd u 
	\leq 
	2 \,  \sin ( \pi w)
    .
    \]
    Furthermore, equality holds iff f is parametrized by constant speed $2\pi$.
\end{lemma}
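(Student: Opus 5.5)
We write $f(u)=(\cos\theta_f(u),\sin\theta_f(u))$, with $\theta_f$ monotone (without loss of generality nondecreasing) and total variation $\theta_f(1)-\theta_f(0)\le 2\pi$. For $u\in\Domain$ and $w\in\intervaloo{0,1}$ set $\delta(u)\ceq\theta_f(u+w)-\theta_f(u)$, where $\theta_f$ is extended to $\R$ by $\theta_f(u+1)=\theta_f(u)+(\theta_f(1)-\theta_f(0))$. Then $\delta(u)\in[0,2\pi]$, and the chordal distance on $\mathbb{S}^1$ is
\[
    \abs{f(u+w)-f(u)}=2\abs{\sin(\delta(u)/2)}=2\sin(\delta(u)/2).
\]
The point is that $s\mapsto 2\sin(s/2)$ is concave on the whole interval $[0,2\pi]$, not only on $[0,\pi]$. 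In \aref{lem:WirtingerMapsOnSphere} we had to truncate the spherical distance at $\pi$. Here monotonicity of the angle lets us use the angular increment itself, which may exceed $\pi$.

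\textbf{Step 1 (Jensen).} Since $\lambda(s)=2\sin(s/2)$ is concave on $[0,2\pi]$, Jensen's inequality gives
\[
    \int_\Domain\abs{f(u+w)-f(u)}\dd u\le 2\sin\Bigl(\tfrac12\int_\Domain\delta(u)\dd u\Bigr).
\]

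\textbf{Step 2 (average increment).} By periodicity of the extension and Fubini, exactly as in the proof of \aref{lem:WirtingerMapsOnSphere},
\[
    \int_\Domain\delta(u)\dd u=w\,(\theta_f(1)-\theta_f(0))\le 2\pi w .
\]
For nonabsolutely continuous monotone $\theta_f$ we argue directly: $\int_0^1(\theta_f(u+w)-\theta_f(u))\dd u=\int_1^{1+w}\theta_f-\int_0^w\theta_f=w\cdot\text{(jump per period)}$. This is exactly why the statement is a $BV$ estimate.

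\textbf{Step 3 (monotonicity of $\lambda$).} Since $\lambda$ is increasing on $[0,\pi]$, we get $2\sin(\pi w\,\theta_{\mathrm{tot}}/(2\pi))\le 2\sin(\pi w)$ when $w\le 1/2$. For $w>1/2$ the argument $\pi w$ exceeds $\pi/2$, so monotonicity fails. To handle this we use symmetry: substituting $u\mapsto u+w$ shows the integral is invariant under $w\mapsto 1-w$, and $\sin(\pi w)=\sin(\pi(1-w))$. We can therefore reduce to $w\le 1/2$.

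\textbf{Equality case.} If equality holds, Step 3 forces $\theta_{\mathrm{tot}}=2\pi$, and Step 1 forces $\delta$ to be almost everywhere constant, equal to $2\pi w$. The main obstacle is deducing constant speed from this. Equality is required only for a single $w$, and $\delta\equiv 2\pi w$ only says that $\theta_f(u)-2\pi u$ is $w$-periodic. For irrational $w$, a monotone $w$- and $1$-periodic function is constant, giving $\theta_f(u)=\theta_f(0)+2\pi u$. For rational $w$ this argument breaks down, and we would need the statement read as equality for all $w$, or some extra input such as strict concavity applied at several $w$. We would first check whether the intended claim is equality for all $w\in\intervaloo{0,1}$. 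In that case $\theta_f(u+w)-\theta_f(u)=2\pi w$ for all $w$, which immediately gives affine $\theta_f$ with speed $2\pi$. The converse direction is a direct computation: for a constant-speed circle, $\delta\equiv 2\pi w$ and every inequality above is an equality.
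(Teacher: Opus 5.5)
Your proof of the inequality is correct, and it takes a different route from the paper. The paper approximates the monotone ($BV$) angle function by monotone $W^{1,1}$ functions $\theta_k$. It runs the estimate of \aref{lem:WirtingerMapsOnSphere} for $f_k=(\cos\theta_k,\sin\theta_k)$, where the geodesic distance is capped at $\pi$ and $\lambda$ is only used on $[0,\pi]$, and then passes to the limit by dominated convergence. You instead work with the lifted increment $\delta(u)\in[0,2\pi]$, use concavity of $s\mapsto 2\sin(s/2)$ on all of $[0,2\pi]$, and compute $\int_\Domain\delta\dd u=w\,(\theta_f(1)-\theta_f(0))$ exactly for any monotone $\theta_f$. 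This avoids the approximation entirely. By strict concavity it also gives a clean equality criterion: equality iff $\delta=2\pi w$ a.e.

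You can drop Step~3 and the reduction to $w\le 1/2$. Since $f(0)=f(1)$, the total increase of $\theta_f$ lies in $2\pi\Z$, and injectivity excludes $0$, so it equals $2\pi$. This is also what makes your chord formula valid for $u+w>1$. Hence $\tfrac12\int_\Domain\delta=\pi w$, and Jensen yields $2\sin(\pi w)$ directly for every $w\in\intervaloo{0,1}$.

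Your doubt about the equality case is justified, and no extra input can rescue it: for rational $w$ the stated ``iff'' is false. For $w=a/b$ in lowest terms and $0<\varepsilon<1/b$, let $\theta_f(u)=2\pi u+\varepsilon\sin(2\pi b u)$.
\begin{itemize}
\item $\theta_f$ increases strictly from $0$ to $2\pi$, so $f$ is injective and satisfies every hypothesis.
\item $\theta_f(u+w)-\theta_f(u)=2\pi w$ for all $u$, so $\abs{f(u+w)-f(u)}\equiv 2\sin(\pi w)$ and equality holds.
\item Yet $\abs{f'}=2\pi\,(1+b\varepsilon\cos(2\pi bu))$ is not constant.
\end{itemize}
The paper's proof merely asserts that its Jensen step is an equality iff $f$ has constant speed, which is exactly this false claim. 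The correct characterization is your condition $\theta_f(u+w)-\theta_f(u)=2\pi w$ a.e. That condition forces constant speed only for irrational $w$, or if equality holds for all $w$.

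Where the lemma is used, in \aref{lem:TPMaxByCircle}, the conclusion survives. There $\delta\equiv 2\pi w$ gives $\gamma(u+w)-z_0=R_{2\pi w}(\gamma(u)-z_0)$ for some $z_0$, with $R_{2\pi w}$ the rotation by $2\pi w$. The extra orthogonality condition in that lemma makes $\abs{\gamma(u+w)-\gamma(u)}=2\sin(\pi w)\abs{\gamma(u)-z_0}$ constant, so $\Image(\gamma)$ is a circle.

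In your irrational case, the justification needs a small repair: $\theta_f(u)-2\pi u$ is $w$- and $1$-periodic but not monotone. Argue instead that for a.e.\ $u$ the identity $\theta_f(x+w)=\theta_f(x)+2\pi w$ holds along the whole orbit $u+w\Z$, which is dense in $\R$. The monotone extended $\theta_f$ therefore agrees with $x\mapsto\theta_f(u)+2\pi(x-u)$ on a dense set, hence everywhere. Alternatively, compare Fourier coefficients of $\theta_f(u)-2\pi u$.
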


\begin{proof}
    Let $f, w$ be given.
    We may assume, that the angle function $\theta_f:[0,1]\rightarrow [0,2\pi]$ is monotonically increasing, hence it is in $BV([0,1], \R)\cap \{ \norm{\theta}_{\Lebesgue[\infty]}\leq 2\pi\}$.
    From $f:\Domain \rightarrow \mathbb{S}^1$, we deduce that $\theta(1)=2\pi$.
    \\
    By~\cite[Theorem 5.3]{EvansGariepyMeasureTheory}, there exists a sequence $(\theta_k)_{k\in \N} \subset W^{1,1}$ such that $\theta_k \rightarrow \theta_f$ in $L^1$.
    Furthermore, by using a non-negative mollifier, we may assume that $\theta_k$ are monotonic and bounded from above by $2\pi$.
    We now define $f_k(t)= (\cos(\theta_k(t)), \sin(\theta_k(t)))$.
    Then $f_k \in L^\infty$ and $f_k \rightarrow f$ pointwise almost everywhere.
    A short computation now shows that
    \[
        \norm{f_k'}_{L^1}
        =
        \int_0^1 
            \abs{\theta_k'(t)}
        \dd t
        \leq 2\pi
        .
    \] 
    Hence, $f_k\in W^{1,1}$ with $\PathLength(f_k)\leq 2\pi$.
    Since $\dist[\mathbb{S}^n] (g(u+w),g(u))\leq 2 \pi$ for any $g: \Domain \rightarrow\mathbb{S}^n$ and $\dist[\mathbb{S}^n](\cdot, \cdot)$ is continuous, we can combine dominated convergence and \aref{lem:WirtingerMapsOnSphere} in order to compute 
    \begin{align*}
    \int _{\Domain} 
		\abs{ f(u+w) - f(u)} 
	\dd u
   	&\leq
    	\lambda \left( 
		\int _{\Domain} 
			\dist[\mathbb{S}^n] (f(u+w),f(u)) 
		\dd u 
	\right)
   	\\
    &=
    	\lim_{k \to \infty}  
		\lambda \left( 
			\int _{\Domain} 
				\dist[\mathbb{S}^n] (f_k(u+w),f_k(u)) 
			\dd u 
		\right)
    \leq
   	 \lim_{k \to \infty} 2  \sin ( \pi w) 
   	 \\
   	&=
   	2 \,  \sin ( \pi w)
    .
    \end{align*}
    Here the first inequality is an equality iff $\theta_f$ is parametrized by constant speed $2\pi$, hence if $f$ is parametrized by constant speed.
\end{proof}
The last two inequalities we need to prove are general consequences of the typical Hölder inequality and Young inequality. 
\begin{lemma}\label{lem:RevHoelder}
    Let $\Omega\subset \R^n$ be measurable and $r>1$.
    Furthermore, let $f,g:\Omega \rightarrow\R$ be measurable functions with $g(x) \neq 0$ for almost all $x\in \Omega$.
    Then
    \[
        \int_\Omega 
            \abs{f g}
        \dd x
        \geq
            (
                \int_\Omega 
                    \abs{f}^{1/r}
                \dd x
            )^r
            (
                \int_{\Omega}
                    \abs{g}^{-\frac{1}{r-1}}
                \dd x
            )^{-(r-1)}
    ,
    \]
    where equality holds, iff there exists an $\alpha \geq 0$ such that $|f|= \alpha |g|^{\frac{-r}{r-1}}$.
\end{lemma}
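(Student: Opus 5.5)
This reverse Hölder inequality is just the standard Hölder inequality applied to a suitable factorisation of $\abs{f}^{1/r}$. We may assume $\int_\Omega \abs{g}^{-1/(r-1)}\dd x<\infty$ and $\int_\Omega \abs{fg}\dd x<\infty$; otherwise the inequality is trivial, with the conventions $\infty^{-(r-1)}=0$ and the right-hand side $\leq\infty$.

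Since $g\neq 0$ almost everywhere, we can write
\[
\abs{f}^{1/r} = \abs{fg}^{1/r}\,\abs{g}^{-1/r}
\]
almost everywhere. We apply Hölder's inequality with the conjugate exponents $r$ and $r'=\frac{r}{r-1}$. This gives
\[
\int_\Omega \abs{f}^{1/r}\dd x
\leq
\Bigl(\int_\Omega \abs{fg}\dd x\Bigr)^{1/r}
\Bigl(\int_\Omega \abs{g}^{-\frac{r'}{r}}\dd x\Bigr)^{1/r'}
=
\Bigl(\int_\Omega \abs{fg}\dd x\Bigr)^{1/r}
\Bigl(\int_\Omega \abs{g}^{-\frac{1}{r-1}}\dd x\Bigr)^{\frac{r-1}{r}} .
\]
Raising both sides to the power $r$ and dividing by $\bigl(\int_\Omega \abs{g}^{-1/(r-1)}\dd x\bigr)^{r-1}$, which is positive because $g$ is finite and nonzero almost everywhere and $\Omega$ has positive measure (the case of a null set $\Omega$ being trivial), yields exactly the claimed inequality.

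For the equality case we use the standard characterisation of equality in Hölder's inequality. Equality holds iff $\abs{fg}^{1/r\cdot r}=\abs{fg}$ and $\abs{g}^{-r'/r}=\abs{g}^{-1/(r-1)}$ are linearly dependent, that is, $\abs{fg}=\alpha\abs{g}^{-1/(r-1)}$ almost everywhere for some $\alpha\geq 0$. The other direction of dependence, $\abs{g}^{-1/(r-1)}=\beta\abs{fg}$, is impossible unless $g$ is infinite, since the left-hand side is positive almost everywhere. Dividing by $\abs{g}$ gives $\abs{f}=\alpha\abs{g}^{-1-\frac{1}{r-1}}=\alpha\abs{g}^{-\frac{r}{r-1}}$, as claimed. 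Conversely, plugging this $f$ in makes both sides equal to $\alpha\int_\Omega\abs{g}^{-1/(r-1)}\dd x$.

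The only delicate points are the bookkeeping of the degenerate cases (infinite integrals, $f=0$ with $\alpha=0$) and the treatment of the direction of linear dependence in the equality case. Neither requires more than a line.
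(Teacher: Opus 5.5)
Your argument is the same as the paper's: factor $\abs{f}^{1/r}=\abs{fg}^{1/r}\abs{g}^{-1/r}$, apply Hölder with exponents $r$ and $r'=\frac{r}{r-1}$, raise to the power $r$ and rearrange. You also write out the equality case and the degenerate cases, which the paper leaves implicit. One small imprecision there: the dependence $\abs{g}^{-1/(r-1)}=\beta\abs{fg}$ is impossible only for $\beta=0$, while for $\beta>0$ it is just your first case with $\alpha=1/\beta$.
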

\begin{proof}
    Let $f,g,r$ be given as above.
    We denote $q=r'= \frac{r}{r-1}$ and compute the following.
    \begin{align*}
        &\int_\Omega
            \abs{f}^{1/r}
        \dd x
        =
        \int_\Omega
            \abs{ fg}^{1/r}
            \abs{g}^{-1/r}
        \dd x
        \leq
        (
            \int_\Omega
                \abs{fg}
            \dd x
        )^{1/r}
        (
            \int_\Omega
                \abs{g}^{q \frac{-1}{r}}
        )^{1/q}
    \end{align*}
    Taking both sides to power $r$ and rearranging the terms, one concludes the theorem.
\end{proof}

\begin{lemma}\label{lem:RevYoung}
	Let $p\in \intervaloo{0,1}, q\in \intervaloo{1,\infty}$ such that $\frac{1}{p}-\frac{1}{q}=1$.
	Then, for all $a,b>0$, one has
	\[
	ab\geq \frac{a^p}{p}- \frac{b^q}{q}
	\]
\end{lemma}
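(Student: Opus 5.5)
The plan is to derive the inequality from the ordinary weighted Young (AM–GM) inequality, and then to check how the result compares with the $b^q$ term exactly as written.

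\textbf{Exponents.} From $\frac1p-\frac1q=1$ we get $\frac1q=\frac{1-p}{p}$, that is, $q=\frac{p}{1-p}$. The condition $q>1$ forces $p\in\intervaloo{\frac12,1}$.

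\textbf{Weighted AM–GM step.} Write $a^p=(ab)^p\,(b^{-1})^p$ and use the weighted arithmetic–geometric mean inequality $x^{p}y^{1-p}\le px+(1-p)y$. Take $x=ab$ and $y=b^{-p/(1-p)}=b^{-q}$; then $y^{1-p}=b^{-p}$, so
\[
a^p\le p\,ab+(1-p)\,b^{-q}.
\]
Dividing by $p$ and using $\frac{1-p}{p}=\frac1q$ gives
\[
ab\ \ge\ \frac{a^p}{p}-\frac{b^{-q}}{q},
\]
with equality iff $x=y$, i.e.\ $ab=b^{-q}$. This part is routine.

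\textbf{Passing to the term $b^q$.} If $b\ge1$, then $b^{-q}\le b^{q}$, so $-\frac{b^{-q}}{q}\ge-\frac{b^{q}}{q}$. Combined with the previous display this gives $ab\ge \frac{a^p}{p}-\frac{b^q}{q}$, the statement as worded.

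\textbf{Main obstacle: the range $b<1$.} Here the step above runs the wrong way, and I expect that no argument can rescue it. Take $p=\frac23$, so $q=2$. For fixed $a$, letting $b\to0$ makes the left side $ab\to0$, while the right side tends to $\frac32a^{2/3}>0$. So the inequality with $+b^{q}$ fails for small $b$.

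My proof would therefore establish the version $ab\ge \frac{a^p}{p}-\frac{b^{-q}}{q}$ for all $a,b>0$, which is the genuine reverse Young inequality matching the reverse Hölder inequality of \aref{lem:RevHoelder}. It would deduce the stated form only under the extra assumption $b\ge1$. When the lemma is applied later, I would check which of the two forms is actually used.
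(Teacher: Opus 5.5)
Your proposal is correct and follows the paper's own argument: the paper applies Young's inequality with exponents $1/p$ and $q/p$ to $x=(ab)^p$, $y=b^{-p}$, which is your weighted AM--GM step with weights $p$ and $1-p=p/q$. Carried out, it likewise gives $ab\ge \frac{a^p}{p}-\frac{b^{-q}}{q}$, so the $b^{q}$ in the statement is a typo and your counterexample ($p=\frac23$, $q=2$, $b\to0$) is valid. The lemma is used only in the proof of the case $p\in[2q-2,2q)$, where it is applied with the negative exponent $p_2<0$, i.e.\ in exactly your corrected form. There $-p_2$ can be below $1$, outside the stated hypothesis $q>1$, but your argument covers it because it only needs $p\in(0,1)$.
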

\begin{proof}
	The claim follows by applying the Young-inequality, with $u=1/p, v=q/p$ onto $x=(ab)^p$, $y=b^{-p}$ and rearranging the terms.
\end{proof}
For our methods, both inequalities can be applied similarly, producing the same result.
In particular, the parameter range in which $\TP^{(p,q)}$ can be shown to be basic, does not depend on the choice of the above two inequalities.
We will use the one, that appears to be more straightforward in the given situation.
Readers that are concerned with lower bounds for similar energies, might use the counterpart.
For example, if one minimizes geometric functionals with an additive pertubation, \aref{lem:RevYoung} might be better suited than \aref{lem:RevHoelder}.

\section{An approximate Fenchel-type Theorem}\label{sec:WirtingerAndLengthBounds}
In this chapter, we will closely investigate the path of $\GaussMap_\gamma$ on $\mathbb{S}^{n-1}$ and derive appropriate bounds.
Additionally, we provide some characterizations of convex curves in space.
Since the class of \textit{convex curves} will be of special interest, we give a definition of said class.
\begin{definition}
    We call an injective, closed space curve $\Domain \rightarrow \mathbb{R}^n$ \textit{convex}, if its image is the boundary of a convex (thus planar) set in $\mathbb{R}^n$ interpreted as a $2$-dimensional manifold.
\end{definition}
\noindent
We now investigate the length of paths of $\GaussMap_\gamma$ if one of the variables is fixed.
For $u\in \Domain$ and $\gamma \in W^{1,1}_{\mathrm{i,r}}(\Domain, \R^n)$, we define
\begin{equation}\label{eq:def eta}
    \eta_{\gamma,u}: \intervaloo{0,1}
     \rightarrow \mathbb{S}^{n-1}
    ,\;
    \eta_{\gamma,u}(w)
    \ceq 
    \GaussMap_\gamma(u,w)
    =
    \frac{\gamma(u+w)-\gamma(u)}{\abs{\gamma(u+w)-\gamma(u)}}
    .
\end{equation}
We choose $\intervaloo{0,1}$ as domain, because $\eta\in C((0,1))$. 
Moreover, if $\gamma '(u)$ exists, we observe that
\begin{equation}\label{eq:ComputationLimitEndpointEta}
	\lim_{w\rightarrow 0}\eta_{\gamma,u}(w)=\frac{\gamma '(u)}{\abs{\gamma '(u)}} \quad \text{and} \quad 
	\lim_{w\rightarrow 1}\eta_{\gamma,u}(w)=-\frac{\gamma '(u)}{\abs{\gamma '(u)}}
    .
\end{equation}
Furthermore, if $\gamma$ is convex, then for every $u \in \Domain$ 
\[
    \lim_{w\rightarrow 1}\eta_{\gamma,u}(w)
        \text{ and }
    \lim_{w\rightarrow 0}\eta_{\gamma,u}(w)
    \text{ exist.}
\] 
In both cases, $\eta$ may be interpreted as a continuous function $\intervalcc{0,1}\rightarrow \mathbb{S}^{n-1}$.
Thus, the second lower bound in \aref{thm: Approximate Fenchel Theorem} is a direct consequence of the fact that $\eta_{\gamma,u}$ continuously connects two antipodal points for almost all $u$. 
On the other hand, characterizing convex curves as the only extremizers of said bound, is a bit more involved. 
We dedicate the following part to that purpose.
The following lemma relates the shape of curves in space to the shape of $\eta_{\gamma,u}$ on $\mathbb{S}^{n-1}$.
\begin{lemma}\label{lem:Eta convex to great half circle}
    Let $\gamma \in W^{1,1}_{\mathrm{i,r}}$. 
    Then $\gamma$ is convex, iff for almost every 
    $u \in \Domain$: 
    $\Image (\eta _ {\gamma ,u})$ is contained in half of a great circle. 
\end{lemma}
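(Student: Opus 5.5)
We prove the two implications separately; the forward direction is elementary and the converse carries the difficulty.

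\emph{Convex $\Rightarrow$ half great circle.} Suppose $\gamma$ is convex. Then $\Image(\gamma)$ lies in a plane $E$ and bounds a convex set $K\subset E$. Fix $u$ such that $\gamma'(u)$ exists. All secant directions $\eta_{\gamma,u}(w)$ lie in the unit circle of $E-\gamma(u)$, which is a great circle of $\mathbb{S}^{n-1}$. By convexity, $K$ lies in the closed half plane bounded by the supporting line $\gamma(u)+\R\gamma'(u)$. Hence every $\gamma(u+w)-\gamma(u)$ points into that half plane, and $\eta_{\gamma,u}(w)$ lies on the closed half great circle from $\gamma'(u)/\abs{\gamma'(u)}$ to $-\gamma'(u)/\abs{\gamma'(u)}$. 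This holds for almost every $u$, since $\gamma\in W^{1,1}$ is differentiable a.e.

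\emph{Converse, first step: planarity.} Assume that for a.e.\ $u$, $\Image(\eta_{\gamma,u})$ lies in a half great circle $C_u$. The vectors $\gamma(v)-\gamma(u)$ then span at most a $2$-dimensional subspace $V_u$, so $\Image(\gamma)\subset \gamma(u)+V_u$. Because $\gamma$ is injective it is not contained in a line, so $\gamma(u)+V_u$ is a single plane $E$ for every good $u$. Hence $\gamma$ is planar.

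\emph{Converse, second step: supporting lines.} A half great circle in $E$ is a closed half of the unit circle of $E$. So for a.e.\ $u$ there is a line $\ell_u$ through $\gamma(u)$ with $\Image(\gamma)$ contained in one closed half plane $H_u$ bounded by $\ell_u$. By \eqref{eq:ComputationLimitEndpointEta}, $\ell_u$ is the tangent line at differentiability points. This set of $u$ is dense, and the limit of supporting half planes is again a supporting half plane (pass to a subsequence of the unit normals). Therefore every point $\gamma(u)$ admits a supporting line of $\Image(\gamma)$.

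\emph{Converse, third step: $\Image(\gamma)$ is the boundary of its hull.} Let $K=\mathrm{conv}(\Image(\gamma))$. Since every point of $\Image(\gamma)$ has a supporting line, $\Image(\gamma)\subset\partial K$. The curve is a Jordan curve, and $\partial K$ is a Jordan curve as well, because $K$ has nonempty interior: $\Image(\gamma)$ is planar and not collinear. A Jordan curve contained in another Jordan curve must equal it, since a proper closed subset of a circle is contained in an arc, and an arc cannot contain a circle by the Jordan–Brouwer/no-retraction argument. Therefore $\Image(\gamma)=\partial K$, and $\gamma$ is convex.

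I expect the main obstacle to be this last step: handling the "almost every $u$" passage to all $u$ rigorously, and the topological argument that a Jordan curve contained in $\partial K$ equals $\partial K$. The half-great-circle hypothesis is also used crucially in the first step, to exclude the case where the $\eta_{\gamma,u}$ span more than a plane.
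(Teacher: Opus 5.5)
Your proof is correct. The forward direction matches the paper's; the converse takes a genuinely different route.

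\textbf{The paper's converse.} It argues by contraposition. A non-planar curve gives a non-planar $\eta_{\gamma,u}$ for every $u$. For a planar, non-convex curve it picks a point $\gamma(u)$ with no supporting line. A finite-subcover argument over the unit normals then shows that $\Image(\eta_{\gamma,z})$ lies in no closed semicircle for all $z$ in a neighbourhood of $u$. So the hypothesis fails on a set of positive measure.

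\textbf{Your converse.} You argue directly. A single good $u$ already gives planarity. The full-measure set of good $u$ gives supporting lines on a dense set. Sequential compactness of the normals shows that having a supporting line is a closed condition in $u$. This is the same fact as the paper's ``the bad set is open'', obtained more economically.

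\textbf{What each approach buys.} The real difference is your third step. The paper simply asserts that a planar non-convex curve has a point without a supporting line. In other words, it takes for granted that a planar Jordan curve with a supporting line at every point is the boundary of a convex set. You actually prove this, by showing $\Image(\gamma)\subset\partial\,\mathrm{conv}(\Image(\gamma))$ and comparing the two Jordan curves. Your argument is therefore more self-contained. The paper's has the advantage of exhibiting an explicit open set of bad parameters.

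\textbf{Minor remarks.} The topological step needs nothing as heavy as Jordan--Brouwer or no-retraction. If $\Image(\gamma)$ missed a point $p\in\partial K$, it would embed in $\partial K\setminus\{p\}\cong\R$. A continuous injection of a circle into $\R$ is impossible by the intermediate value theorem. In the forward direction, restricting to differentiability points and identifying the supporting line with the tangent line is harmless but unnecessary: any supporting line at $\gamma(u)$ works, as in the paper, for every $u$.
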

\begin{proof}
    Suppose $\gamma$ is convex and let $u\in \Domain$ be arbitrary. 
    We may assume that $\gamma (u)=0$.
    Let $E \subset \R^n$ denote the plane, with $\gamma(\Domain)\subset E$. 
    In particular, $r \,\gamma(u+w)\in E$ for any $w$ and any $r \neq 0$. 
    This implies, that $\eta _{\gamma,u} ((0,1)) \subset E$ and by continuity of $\eta _{\gamma,u}$, we conclude $\eta _{\gamma,u} ([0,1]) \subset E$. 
    \\
    Convexity of $\gamma$ implies that $\Image(\gamma)$ is located on one side of the supporting plane at $\gamma(u)$ within $E$. 
    Since $\gamma(u)=0$, the same then holds for $\Image(\eta _{\gamma,u} )$.
    Thus, $\Image(\eta _{\gamma,u} )$ is a part of a half of great circle on $\mathbb{S}^{n-1}$. 
    \\
    Now assume that $\gamma\in W^{1,1}_{\mathrm{i,r}}$ is not convex. 
    If $\gamma$ is not planar, neither is $\eta_{\gamma, u}$ for any $u$.
    So in particular $\eta_{\gamma, u}$ is not contained in half of a great circle.
    Hence, we now assume that $\gamma$ is planar but not convex. 
    Without loss of generality, we assume that $\gamma: \Domain \rightarrow \R^2$.
    Since $\gamma$ is not convex, there is a $u\in \Domain$, such that there is no supporting hyperplane at $u$.
    This implies, that for every $v\in \mathbb{S}^1$, there are $w_1,w_2$ such that
    \[
    \inner{\gamma(u+w_1)-\gamma(u), v}>0
    \text{ and }
    \inner{\gamma(u+w_2)-\gamma(u), v}<0
    .
    \]
    We denote $y_i(u,v)=u+w_i$.
    Since $(u,y,n)\mapsto \inner{\gamma(y)-\gamma(u),n}$ is continuous, there is a $\delta=\delta(v)>0$, sufficiently small, such that
    \[
    \inner{\gamma(y_2(u,v))-\gamma(z), n}
    <0
    <
    \inner{\gamma(y_1(u,v))-\gamma(z), n}
    \text{ for all }
    z\in B_{\delta}(u),n\in B_\delta(v)\cap \mathbb{S}^{n-1}
    .
    \]
    Furthermore, $\{B_{\delta(v)}(v), v\in \mathbb{S}^{n-1}\}$ covers $\mathbb{S}^{n-1}$, hence we can extract a finite subcover $\{ B_{\delta(v_i)}(v_i), i=1,...,N\}$.
    Let $\delta=\min_{i=1,...,N}\delta(v_i)$.
    We claim, that for all $z\in B_\delta(u)$ $\Image(\eta_{\gamma,z})$ is not contained in a closed hemisphere.
    This is true, since for $z\in B_\delta(u), n\in \mathbb{S}^{n-1}$, there is an $i$ such that
    $n\in B_{\delta}(v_i)$.
    Therefore,
    \[
    \inner{\gamma(y_2(u,v_i))-\gamma(z), n}
    <0
    <
    \inner{\gamma(y_1(u,v_i))-\gamma(z), n}
    .
    \]
    Hence, $\Image(\eta_{\gamma,z})$ is not contained in a closed hemisphere, so in particular it is not contained in half of a great circle.
\end{proof}
\begin{remark}\label{rmk:AngleEtaMonotonic}
    A consequence of the above is, that if  $\Image(\eta_{\gamma,u})$ is located on a half of a great circle for almost all $u\in \Domain$,
    then $\dist[\mathbb{S}^{n-1}][\eta_{\gamma,u}(0)][\eta_{\gamma,u}(w)]$ is monotonically increasing:\\
    By the above Lemma, we immediately conclude from the assumption that $\gamma$ is convex.
        Suppose now by contradiction, that there are $0\leq w_1<w_2<1$ such that
    \[
       \dist[\mathbb{S}^{n-1}]\pars{\eta_{\gamma,u}(0),\eta_{\gamma,u}(w_1)}
        >
        \dist[\mathbb{S}^{n-1}]\pars{\eta_{\gamma,u}(0),\eta_{\gamma,u}(w_2)}
    .
    \]
    Since we already know that $\Image(\eta_{\gamma,u})$ is contained in a half of a great circle, connecting $\eta_{\gamma,u}(0)$ and $\eta_{\gamma, u}(1)$, we may assume that there is $w_3$ with $w_2<w_3 <1$ such that $\eta_{\gamma,u}(w_1)= \eta_{\gamma,u}(w_3)$.
    Because $\gamma'(z)$ exists for almost all $z$ and due to continuity of $\eta_{\gamma,u}$, we may assume that $\gamma'(w_i)$ exists for all $i=1,2,3$.
    By definition, $\Image(\gamma)$ is the boundary of a convex, planar set $\Gamma$.\\
    Since $\gamma(u),\gamma(u+w_1), \gamma(u+w_3)\in \Image(\gamma)=\partial \Gamma$, the line $\gamma(u)+\text{span}(\eta_{\gamma,u}(w_1))$ intersects $\partial \Gamma$ at three points.
    Therefore, we know that $\gamma(u+[w_1,w_3])$ is a line segment, contained in $\text{span}(\eta_{\gamma,u}(w_1))$.
    This implies that $\gamma(u+w_2)$ is contained in said line segment and therefore, we have that $\eta_{\gamma,u}(w_1)= \eta_{\gamma, u}(w_2)$, contradicting the assumption.
    Note that this also implies, that $w\mapsto\Angle{\eta_{\gamma,u}(0)}{\eta_{\gamma,u}(w)}$ is monotonic.
\end{remark}
We are now ready to prove the second part of \aref{thm: Approximate Fenchel Theorem}.

\begin{lemma}\label{lem:LengthOfGaussMapinW}
    Let $\gamma\in W^{1,1}_{\mathrm{i,r}}(\Domain, \AmbSpace)$.
    Then for almost every $u\in \Domain$, the following inequality holds true
    \[
        \PathLength(w\mapsto \GaussMap_\gamma(u,w))\geq \pi
    \]
    with equality for almost every $ u \in \Domain$ iff $\gamma$ is a planar, convex curve. 
    Furthermore, the equality holds for all $u\in \Domain$, iff $\gamma$ is a convex $C^1$-curve.
\end{lemma}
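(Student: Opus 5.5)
We fix a point $u\in\Domain$ at which $\gamma'(u)$ exists and is nonzero; by regularity this holds for almost every $u$. By \eqref{eq:ComputationLimitEndpointEta}, $\eta_{\gamma,u}$ then extends continuously to $\intervalcc{0,1}$ with $\eta_{\gamma,u}(0)=\gamma'(u)/\abs{\gamma'(u)}$ and $\eta_{\gamma,u}(1)=-\eta_{\gamma,u}(0)$. Hence $\eta_{\gamma,u}$ is a continuous path on $\mathbb{S}^{n-1}$ joining two antipodal points. The length of $w\mapsto \GaussMap_\gamma(u,w)$ (as a supremum over partitions, i.e.\ total variation, which coincides with $\int_0^1\abs{\partial_w\GaussMap_\gamma}\dd w$ since $\eta_{\gamma,u}$ is locally $W^{1,1}$ on $\intervaloo{0,1}$) is therefore at least $\dist[\mathbb{S}^{n-1}][\eta(0)][\eta(1)]=\pi$. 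This gives the inequality.

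\emph{Equality case.} Suppose first that $\gamma$ is convex. By \aref{lem:Eta convex to great half circle}, $\Image(\eta_{\gamma,u})$ lies on a half great circle joining $\eta(0)$ to $-\eta(0)$. By \aref{rmk:AngleEtaMonotonic}, $w\mapsto\dist[\mathbb{S}^{n-1}][\eta(0)][\eta(w)]$ is monotone. A monotone path along a half great circle has length exactly $\pi$.

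Conversely, suppose the length equals $\pi$ for almost every $u$. For each such $u$, the path has the same length as the geodesic distance between its antipodal endpoints. For any intermediate $w$, the triangle inequality gives $\PathLength(\eta|_{[0,w]})\geq\dist(\eta(0),\eta(w))$ and $\PathLength(\eta|_{[w,1]})\geq\dist(\eta(w),-\eta(0))$. Summing, and using $\dist(x,y)+\dist(y,-x)=\pi$ for every $y\in\mathbb{S}^{n-1}$, both inequalities must be equalities. So every subarc is minimizing, and the path is a monotone reparametrization of a minimizing geodesic, i.e.\ it lies in a half great circle. \aref{lem:Eta convex to great half circle} then yields convexity, and in particular planarity.

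\emph{Statement for all $u$.} If $\gamma$ is convex and $C^1$, the endpoint limits exist at every $u$, and the same monotonicity argument gives length $\pi$ for every $u$. Conversely, suppose equality holds for every $u$. Then $\gamma$ is convex by the previous part. At a corner $u$ of a convex curve, the one-sided tangents $\lim_{w\to0}\eta(w)=\gamma'_+(u)$ and $\lim_{w\to1}\eta(w)=-\gamma'_-(u)$ exist but are not antipodal; the angle between them is $\pi-\alpha$, where $\alpha>0$ is the exterior angle. By monotonicity the length is then $\pi-\alpha<\pi$, a contradiction. Hence no corners exist. A convex curve has one-sided tangents everywhere, so the absence of corners means the unit tangent is continuous; with regular parametrization, and after noting that the tangent direction is continuous, $\gamma$ is $C^1$ (up to reparametrization).

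The main obstacle is this last step: making the corner argument rigorous, and relating "no corners" to $C^1$ regularity of the given $W^{1,1}$ parametrization rather than of the image. I would first prove that the unit tangent extends continuously, and then interpret $C^1$ geometrically, using arc-length reparametrization if needed.
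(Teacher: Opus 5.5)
Your proposal is correct and follows essentially the same route as the paper. Antipodal endpoints of $\eta_{\gamma,u}$ give length at least $\pi$, equality forces a monotone path along a half great circle, the paper's half-great-circle lemma and the remark after it turn this into convexity, and at a corner the endpoint limits would not be antipodal, so equality at every $u$ excludes corners. Your caveat about $C^1$ is justified and is more careful than the paper's one-line ``using convexity, we conclude that $\gamma$ is $C^1$''. The length of $w\mapsto \GaussMap_\gamma(u,w)$ is invariant under reparametrization, so the last claim can only hold with $C^1$ understood geometrically, i.e.\ a continuous unit tangent, or equivalently the existence of a regular $C^1$ reparametrization.
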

\begin{proof}
	Let $\gamma \in W^{1,1}_{\mathrm{i,r}}(\Domain, \AmbSpace)$. 
    If $\PathLength(w\mapsto \GaussMap_\gamma(u,w))=\pi$ for $u \in U \subset \Domain$ with $\lambda(U)=1$, then the subset $\{u \in U \, | \, \gamma '(u) \text{ exists} \}$ also has measure $1$.
	Now fix $u \in \Domain$ such that $\gamma '(u)$ exists.
	Then $\eta_{\gamma ,u}$ continuously connects the two antipodal points $\gamma '(u)/\abs{\gamma '(u)}$ and $-\gamma '(u)/\abs{\gamma '(u)}$ on $\mathbb{S} ^n$. 
    Therefore,
	\[
	    \PathLength(w\mapsto \GaussMap_\gamma(u,w))
        = 
        \PathLength(\eta_{\gamma ,u}) \geq \pi
        ,
	\]
	with equality iff $\Image(\eta_{\gamma,u})$ is a great half circle and $\dist_{\mathbb{S}^{n-1}}(\eta (\cdot), \gamma'(u))$ is monotonically increasing.
	By \aref{lem:Eta convex to great half circle} and \aref{rmk:AngleEtaMonotonic} we conclude that, $\PathLength(w\mapsto \GaussMap_\gamma(u,w))=\pi$ for almost every $u \in \Domain$, iff $\gamma$ is convex.
	\\ 
    \noindent
    If equality holds for all $u$, we first conclude that $\gamma$ is convex.
    Furthermore, \aref{lem:Eta convex to great half circle} and \aref{rmk:AngleEtaMonotonic} imply $\eta_{\gamma,u}(1)= -\eta_{\gamma,u}(0)$.
    Hence, the one-sided differentials at $u$ coincide. 
    Using convexity, we conclude that $\gamma$ is $C^1$.
    If $\gamma$ is convex and $C^1$, the claim follows as a combination of \aref{eq:ComputationLimitEndpointEta}, \aref{lem:Eta convex to great half circle} and \aref{rmk:AngleEtaMonotonic}.
\end{proof}
We now prove the first part of \aref{thm: Approximate Fenchel Theorem}.
For given $\gamma\in W^{1,1}_{\mathrm{i,r}}, w\in (0,1)$ we define
\begin{equation}\label{eq:def rho}
	\rho_{\gamma,w}:\Domain \rightarrow \mathbb{S}^{n-1}
	,\;
	\rho_{\gamma,w}(u)
	\ceq 
	\GaussMap_\gamma(u,w)
	=
	\frac{\gamma(u+w)-\gamma(u)}{\abs{\gamma(u+w)-\gamma(u)}}
	.
\end{equation}
Note that, in contrast to $\eta_{\gamma,u}$, the function $\rho_{\gamma,w}$ does not necessarily connect two antipodal points.
Instead, $\Image(\rho_{\gamma,w})$ is a closed curve on $\mathbb{S}^n$. 
Therefore, the lower bound on $\PathLength(\rho_{\gamma,w})$ is more involved. 
In order to obtain said lower bound, we prove that $\Image (\rho_{\gamma,w})$ can not lie in an open hemisphere, mimicking the proof of Fenchel's theorem. 
Before we prove that, we state the following lemma as an analog to \aref{lem:Eta convex to great half circle}. 
It poses to be essential for characterizing convex curves as minimizers.\\
In order to do so, we use the following convention.
For $v\in \mathbb{S}^{n-1}$ and $f\in C([0,T],\mathbb{S}^{n-1})$, the function $\theta(x)\ceq \Angle{v}{f(x)}$ denotes the continuous lift of the angle function.
\begin{lemma}\label{lem:RhoGreatCircleMonotonicAngleGammaConvex}
    Let $\gamma\in W^{1,1}_{\mathrm{i,r}}$.
    Then $\gamma$ is convex iff
    $\Image(\rho_{\gamma,w})$ is a great circle for all $w\in (0,1)$ and $\rho_{\gamma,w}$ is a parametrization with monotonic angle function 
    $\theta \ceq \theta _{\gamma , w}:[0,1] \rightarrow\R, u\mapsto \Angle{\rho_{\gamma,w}(0)}{\rho_{\gamma,w}(u)}$ satisfying $\theta(1)\in \{\pm 2 \pi \}$.
\end{lemma}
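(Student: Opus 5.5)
We prove the two implications separately; as in the proof of \aref{lem:Eta convex to great half circle}, the converse direction reduces to the planar case.

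\emph{Convex $\Rightarrow$ great circle with monotonic angle.} Let $\gamma$ be convex, lying in a plane $E$, and fix $w\in(0,1)$. Every secant direction $\gamma(u+w)-\gamma(u)$ lies in the direction space of $E$, so $\Image(\rho_{\gamma,w})$ is contained in the great circle $\mathbb{S}^{n-1}\cap E$. Monotonicity of the angle follows from a standard fact about convex curves. Orient $\gamma$ positively and let $\theta$ be the continuous lift of the angle of $\rho_{\gamma,w}$. Then for $u_1<u_2$ the chords $[\gamma(u_1),\gamma(u_1+w)]$ and $[\gamma(u_2),\gamma(u_2+w)]$ have interlacing endpoints on the convex curve. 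Hence the direction of the second chord is obtained from the first by a rotation in the positive sense, by an angle in $[0,\pi]$ (if the endpoints coincide on a flat piece, the angle is $0$).

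To make this precise, we use the support-line/tangent-angle description of a convex curve. The one-sided tangent angle $\beta(t)$ of $\gamma$ is nondecreasing with total increase $2\pi$. The chord direction from $\gamma(u)$ to $\gamma(u+w)$ lies between $\beta(u^+)$ and $\beta((u+w)^-)$, namely as the average direction of $\gamma'$ over $[u,u+w]$. Moving $u$ forward adds tangent directions at the end and removes them at the beginning, and both changes rotate the mean direction positively. This is the same three-points-on-a-line argument as in \aref{rmk:AngleEtaMonotonic}.

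Since $u\mapsto \rho_{\gamma,w}(u)$ is a closed loop traversing the circle monotonically, $\theta(1)\in 2\pi\Z$. Because $\rho_{\gamma,w}(u+1)=\rho_{\gamma,w}(u)$ and, for $w\to 0$, the winding number equals that of the tangent, which is $\pm1$ for a simple closed convex curve, we get $\theta(1)=\pm 2\pi$. Continuity of the degree in $w$ transfers this value to all $w$, since the secants never vanish by injectivity.

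\emph{Great circle with monotonic angle $\Rightarrow$ convex.} Suppose the condition holds. First, planarity: if $\Image(\rho_{\gamma,w})$ is a great circle $C_w = \mathbb{S}^{n-1}\cap V_w$ for a $2$-plane $V_w$, then $V_w$ depends continuously on $w$. Letting $w\to0$ at points of differentiability shows that $\gamma'(u)\in V_w$ in the limit. More directly, $\gamma(u+w)-\gamma(u)\in V_w$ for all $u$, and summing over $u, u+w, u+2w,\dots$ (for rational $w$) shows that all differences $\gamma(u+kw)-\gamma(u)$ lie in $V_w$. Taking $w=1/N$ and using density and continuity, all secants lie in one plane $V$, so $\gamma$ is planar. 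We then argue by contradiction exactly as in \aref{lem:Eta convex to great half circle}.

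If $\gamma$ is planar but not convex, take a point $\gamma(u_0)$ at which there is no supporting line. Then there is a point of $\gamma$ where the tangent angle turns backwards, i.e.\ $\gamma$ has a locally concave arc. For small $w$, $\rho_{\gamma,w}$ approximates the unit tangent, uniformly in an $L^1$/averaged sense. So the angle $\theta_{\gamma,w}$ approximates the tangent angle, which is not monotone on the concave arc. This contradicts monotonicity of $\theta_{\gamma,w}$ for small $w$.

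The main obstacle is making this last approximation rigorous at low regularity ($\gamma\in W^{1,1}$ only). We would therefore rather use a secant-based argument: from nonconvexity, find $u$ and $w$ such that the chord from $\gamma(u)$ to $\gamma(u+w)$ has points of $\gamma$ on both sides. By the argument with three secants and the intermediate value theorem, the chord direction must then revisit a previous angle after exceeding it. This contradicts strict monotonicity combined with total variation exactly $2\pi$: a monotone lift with $\abs{\theta(1)}=2\pi$ cannot return to an earlier value without being constant in between.
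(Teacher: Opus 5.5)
\textbf{Forward direction and planarity.} Your forward direction is essentially the paper's. The claim that ``adding tangent directions at the end and removing them at the beginning rotates the chord positively'' is exactly $\theta_{\gamma,w}'(u)=\det(\rho_{\gamma,w}(u),\gamma'(u+w)-\gamma'(u))/\abs{\gamma(u+w)-\gamma(u)}\geq 0$. Its clean justification is the pair of supporting-line inequalities $\det(\gamma'(u),\rho_{\gamma,w}(u))\geq 0\geq\det(\gamma'(u+w),\rho_{\gamma,w}(u))$ for a positively oriented convex curve. Merely lying ``between $\beta(u^+)$ and $\beta((u+w)^-)$'' does not give these signs once the arc turns by more than $\pi$. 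Your degree argument for $\theta(1)=\pm2\pi$ addresses a point the paper passes over, since $\rho_{\gamma,w}(0)=\rho_{\gamma,w}(1)$ alone only gives $\theta(1)\in2\pi\Z$. In the planarity step, $w=1/N$ only yields $\gamma(u+k/N)-\gamma(u)\in V_{1/N}$, so you still need $V_{1/N}$ to be independent of $N$; the asserted continuity of $w\mapsto V_w$ is not proved. This is easily repaired. Telescoping puts the vectors $\gamma(u+1/N)-\gamma(u)$, which span $V_{1/N}$, into $V_{1/(NM)}$, whence $V_{1/N}=V_{1/(NM)}=V_{1/M}$. Alternatively, as the paper does, take one irrational $w$, whose multiples are dense modulo $1$.

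\textbf{The gap: convexity in the converse.} You sketch the paper's route ($\rho_{\gamma,w}\to\gamma'/\abs{\gamma'}$ as $w\to0$, so monotonicity passes to the tangent angle), but then abandon it. The ``secant-based argument'' you substitute does not work. As stated, a chord whose line has points of $\gamma$ on both sides exists for every convex curve (any chord of a circle), so it does not detect non-convexity. Even with a sharper formulation (the arc $\gamma([u,u+w])$ crossing its chord), nothing you write explains why $u\mapsto\rho_{\gamma,w}(u)$, for that fixed $w$, must revisit a direction. Nor does the argument of \aref{lem:Eta convex to great half circle} transfer: it concerns $\eta_{\gamma,z}$ and hemispheres, not monotonicity in $u$. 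So ``monotone angles $\Rightarrow$ convex'' remains unproved.

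\textbf{How to close it.} The obstacle you feared is not real, because no uniform approximation is needed.
\begin{enumerate}
\item Parametrize so that $\gamma'(0)\neq0$ exists and, reversing orientation if necessary, $\theta_{\gamma,w}$ is nondecreasing with values in $[0,2\pi]$.
\item By Helly's selection theorem, some $\theta_{\gamma,w_k}$ with $w_k\to0$ converge everywhere to a nondecreasing $\Theta$ with $\Theta(0)=0$ and $\Theta(1)=2\pi$.
\item The pointwise convergence $\rho_{\gamma,w}(u)\to\gamma'(u)/\abs{\gamma'(u)}$ at every point where $\gamma'(u)\neq0$ exists shows that, for a.e.\ $u$, the unit tangent at $u$ is the unit tangent at $0$ rotated by $\Theta(u)$. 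Extend $\Theta$ by $\Theta(t+1)=\Theta(t)+2\pi$.
\item Fix such a $u_0$, and let $\nu$ be the unit tangent at $u_0$ rotated by $\pi/2$. On $[u_0,u_0+1]$ the function $t\mapsto\inner{\gamma(t)-\gamma(u_0),\nu}$ has derivative $\abs{\gamma'(t)}\sin(\Theta(t)-\Theta(u_0))$. This is first $\geq0$ and then $\leq0$, because $\Theta(t)-\Theta(u_0)$ increases from $0$ to $2\pi$. Since the function vanishes at both ends, it is nonnegative.
\item Hence the tangent lines at a dense set of points support $\Image(\gamma)$, and the injective closed curve is the boundary of its convex hull.
\end{enumerate}
This is the ``standard argument'' the paper invokes after letting $w\searrow0$.
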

\begin{proof}
    If the statement holds true for $\gamma(\cdot)$, then it also holds true for $\gamma(0-\cdot)$.
    Hence, we may assume that the parametrization of the angle is always monotonically increasing.
    Let $\gamma\in W^{1,1}_{\mathrm{i,r}}$ be convex.
    Then $\gamma$ is planar. 
    In particular, this implies that $\Image(\rho_{\gamma,w})$ is contained in a great circle.
    From now on, we assume that $\Image(\gamma)\subset \R^2$.
    Applying a rotation, we may assume that $\frac{\gamma'(0)}{\abs{\gamma'(0)}}=e_1$.
    Recall that $\theta_{\gamma,w}$ is the continuous angle function, measured with respect to $\rho_{\gamma,w}(0)$.
    One computes that
    \[
        \frac{\dd}{\dd u} \theta_{\gamma,w}(u)
        =
        \det(\rho_{\gamma,w}(u), \rho_{\gamma,w}'(u))
        =
        \frac{
            \det(\rho_{\gamma,w}(u), \gamma'(u+w)-\gamma'(u))
        }
        {\abs{\Delta_{u+w,u}\gamma}} 
    \text{ almost everywhere.}
    \]
    It now suffices to show that $\det(\rho_{\gamma,w}(u), \gamma'(u+w)-\gamma'(u))\geq 0$ for almost every $u\in[0,1]$.
    Note that 
    \[
        \det(\rho_{\gamma,w}(u), \gamma'(u+w)-\gamma'(u))
        =
        \sin(
            \Angle{\frac{\gamma'(u+w)}{\abs{\gamma'(u+w)}}}{\rho_{\gamma,w}(u)}
        )
        -
        \sin(
            \Angle{\frac{\gamma'(u)}{\abs{\gamma'(u)}}}{\rho_{\gamma,w}(u)}
        )
    ,
    \]
    Since $\gamma$ is convex, we know that 
    \[
            -\pi 
        \leq 
            \Angle{\frac{\gamma'(u)}{\abs{\gamma'(u)}}}{\rho_{\gamma,w}(u)}
        \leq 
            0 
        \leq
            \Angle{\frac{\gamma'(u+w)}{\abs{\gamma'(u+w)}}}{\rho_{\gamma,w}(u)}
        \leq
            \pi
        \;(
            \text{mod }
            2\pi
        )
        \;
    .
    \]
    Hence, $\theta_{\gamma,w}'(u) \geq 0$ for almost all $u\in [0,1]$.
    Furthermore, $\rho_{\gamma,w}(0)=\rho_{\gamma,w}(1)$ implies that $\theta_{\gamma,w}(1)=2\pi$.
    \\
    Now suppose that $\Image(\rho_{\gamma,w})$ is a great circle and $\theta_{\gamma,w}$ is monotonically increasing for all $w$.
    We may assume that $\gamma(0)=0$ and that for a fixed, irrational $w\in [0,1]$ $\Image(\rho_{\gamma,w})=\mathbb{S}^{n-1} \cap \R^2$.
    Let $X=\{ w \N \mod 1\}$.
    A standard argument implies that $X\subset \Domain$ is dense.
    Since $\rho_{\gamma, w}(u)\in \R^2$ for all $u\in X$, by a telescopic sum, we conclude that $\gamma(w)\in \R^2$ for all $w\in X$.
    Due to the continuity of $\gamma$, we conclude that $\gamma$ is a planar curve.\\
    Moreover, a short computation shows that
    \[
        \lim_{w\searrow 0}\rho_{\gamma, w}(u)
        =
        \frac{\gamma'(u)}{\abs{\gamma'(u)}}
    \]
    for almost all $u\in [0,1]$. 
    Let $\Angle{e_1}{\cdot}:\mathbb{S}^1\rightarrow \R$ be the continuous lift angle function, measuring with respect to $e_1$.
    Thus, by continuity of $\Angle{e_1}{\cdot}$ on $\mathbb{S}^1$ and $\theta_{\gamma,w}(u)=\Angle{\rho_{\gamma,w}(0)}{e_1} + \Angle{e_1}{\rho_{\gamma,w}(u)}$,
    \[
    	\lim_{w \searrow 0} \theta_{\gamma, w} 
        = 
        \Angle{e_1}{\frac{\gamma'(\cdot)}{\abs{\gamma'(\cdot)}}} 
        \quad \text{almost everywhere.}
    \]
    Therefore, monotonicity of $\theta_{\gamma,w}$ for all $w$ implies monotonicity of $\Angle{e_1}{\frac{\gamma'(\cdot)}{\abs{\gamma'(\cdot)}}}$.
    A standard argument, relying on the supporting hyperplane theorem, now implies that $\gamma$ is convex.
\end{proof}
We are now equipped to prove the central statement of the approximate Fenchel theorem.
\begin{lemma}\label{lem:LengthOfGaussMapinU}
    Let $\gamma \in W_{\mathrm{i,r}}^{1,1}(\Domain, \R^n)$. 
    Then for all $w\in (0,1)$, the following inequality holds true
    \[
        \PathLength(u\mapsto \GaussMap_\gamma(u,w))
        \geq
        2\pi
    \]
    with equality for all $w$ iff $\gamma$ is a planar, convex curve.
\end{lemma}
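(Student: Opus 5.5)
Fix $w\in(0,1)$ and write $\rho=\rho_{\gamma,w}$, a closed curve on $\mathbb{S}^{n-1}$. Following the proof of Fenchel's theorem, the plan is to show that $\Image(\rho)$ is not contained in any open hemisphere. The classical fact that a closed rectifiable curve on $\mathbb{S}^{n-1}$ of length $<2\pi$ lies in an open hemisphere then gives $\PathLength(\rho)\geq 2\pi$. That fact can be proved by the midpoint argument: pick two points splitting the curve into arcs of equal length $<\pi$, and note that the curve stays in the open hemisphere centred at their spherical midpoint.

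To exclude the hemisphere, suppose $\inner{\rho(u),v}>0$ for all $u$ and some unit $v$. Then $\inner{\gamma(u+w)-\gamma(u),v}>0$ for every $u\in\Domain$, so $h(u)\ceq\inner{\gamma(u),v}$ satisfies $h(u+w)>h(u)$ for all $u$. Take $u_0$ with $h(u_0)=\max h$, which exists by continuity on the compact torus. Then $h(u_0+w)>h(u_0)$, a contradiction. This step is elementary. Injectivity of $\gamma$ is used only to make $\rho$ well defined, and $\rho\in W^{1,1}$ because $\abs{\gamma(u+w)-\gamma(u)}$ is bounded below for fixed $w$.

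For the equality case, first suppose $\gamma$ is convex. By \aref{lem:RhoGreatCircleMonotonicAngleGammaConvex}, $\rho$ runs monotonically once around a great circle with $\theta(1)=\pm2\pi$, so its length is exactly $2\pi$ for every $w$.

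The converse is the main obstacle. Suppose $\PathLength(\rho_{\gamma,w})=2\pi$ for all $w$. By the Fenchel rigidity argument, a closed curve of length exactly $2\pi$ not contained in an open hemisphere must either be a great circle traversed monotonically once, or a doubly traversed great semicircle (a digon of two arcs of length $\pi$ between antipodal points). I will carry out that dichotomy via the midpoint argument and its equality case. Running the same max argument on the closed hemisphere gives $h(u+w)\geq h(u)$ everywhere, hence $h(u+w)=h(u)$ at maximisers and at minimisers of $h$. I then want to exclude the degenerate digon case for generic $w$: if for a dense set of $w$ the image is a great circle with monotone angle, continuity in $w$ should upgrade this to all $w$, after which \aref{lem:RhoGreatCircleMonotonicAngleGammaConvex} yields convexity. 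To exclude the digon I would try the following. In that case $\rho$ takes antipodal values $\pm e$, forcing $\gamma(u+w)-\gamma(u)$ to be parallel to $e$ at two parameters. Combined with the length being exactly $\pi$ on each half, this forces the image into a half-great-circle with a turning point. I expect to show that this cannot happen for all $w$, for instance by letting $w\to0$, where $\rho_{\gamma,w}\to\gamma'/\abs{\gamma'}$ almost everywhere, and comparing with the monotonicity forced at neighbouring $w$. This is the step I expect to need the most care.
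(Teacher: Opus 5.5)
Your proof of the inequality is correct and follows the paper. Your maximum argument replaces the paper's identity $\int_\Domain\inner{\gamma(u+w)-\gamma(u),v}\dd u=0$, which is an inessential change. The direction ``convex $\Rightarrow$ length $2\pi$'' via \aref{lem:RhoGreatCircleMonotonicAngleGammaConvex} is also the paper's.

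\textbf{The gap is in the converse.} You rightly note that a closed curve of length $2\pi$ on $\mathbb{S}^{n-1}$ that avoids every open hemisphere need not be a once-traversed great circle. It can also be a union of two great semicircles with common antipodal endpoints $\pm e$, i.e.\ a lune; the doubly traversed semicircle is the special case where the two coincide. The paper's proof asserts without argument that equality forces a monotone great circle, so it skips this case as well. However, your proposal does not exclude it either. The dense-set-of-$w$ continuity step and the $w\to0$ comparison are only intentions. The fact you actually derive, $h(u+w)=h(u)$ at the extrema of $h$, is too weak to rule anything out.

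\textbf{The missing idea is the non-strict version of the averaging identity.} Suppose $\inner{\rho_{\gamma,w}(u),v}\geq0$ for all $u$. Then $u\mapsto\inner{\gamma(u+w)-\gamma(u),v}$ is continuous, nonnegative and has zero integral over $\Domain$, so it vanishes identically. (Equivalently, chain $h(u)\le h(u+w)\le h(u+2w)\le\cdots$ and use density or periodicity of the orbit.) Hence any closed hemisphere containing $\Image(\rho_{\gamma,w})$ contains it in its boundary great sphere.

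\begin{itemize}
\item[(a)] \emph{This kills the lune directly.} Take a lune whose semicircles pass through $a,b\perp e$ with $a\neq-b$. It lies in the closed hemisphere with pole $v=(a+b)/\abs{a+b}$. But the points $\cos t\,e+\sin t\,a$, $0<t<\pi$, satisfy $\inner{\cdot,v}=\sin t\,\inner{a,v}>0$. So a lune cannot occur for any single $w$, and no limit in $w$ is needed.
\item[(b)] \emph{Alternatively, you can avoid the classification altogether.} On a sphere of dimension at least two, a closed curve of length $\leq2\pi$ lies in a closed hemisphere, by the midpoint argument. The identity then pushes $\rho_{\gamma,w}$ into a great sphere of one dimension less. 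Iterating places it on a great circle. There, a lift $\theta$ of degree $0$ with total variation $2\pi$ would confine the image to a closed half circle. By the identity it would then lie in $\{\pm e\}$, making $\rho_{\gamma,w}$ constant, which is impossible. Hence the degree is $\pm1$, and $\PathLength(\rho_{\gamma,w})=2\pi=\abs{\theta(1)-\theta(0)}$ forces $\theta$ to be monotone.
\end{itemize}

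Either way, this holds for every $w$, so the hypothesis of \aref{lem:RhoGreatCircleMonotonicAngleGammaConvex} is met and that lemma yields convexity.
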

\begin{proof}
    Let $\gamma \in W_{\mathrm{i,r}}^{1,1}$ and $w \in (0,1)$. 
    Again we observe that $\rho_{\gamma,w}$ is a closed $W^{1,1}$-curve on $\mathbb{S}^{n-1}$.
    We claim that $\Image(\rho_{\gamma,w})$ is not contained in one open hemisphere.
    The proof of this property mimics the proof of Fenchel's theorem about the total curvature of curves on spheres.
    If the image is contained in an open hemisphere, there exists an $n\in \mathbb{S}^{n-1}$ such that
    \[
        \inner{\rho_{\gamma,w}(u), n}= \inner{\GaussMap_\gamma(u,w), n} >0
        \text{ for all }
        u\in \Domain.
    \]
    This implies that $\inner{\gamma(u+w)-\gamma(u), n} >0 $ for all $u\in \Domain$.
    Integrating now yields that
    \begin{align*}
        0&<
            \int_\Domain
                \inner{\gamma(u+w)-\gamma(u), n} 
            \dd u   
        =
                \inner{ \int_\Domain \gamma(u+w) \dd u,n}
            -  
                \inner{ \int_\Domain \gamma(u) \dd u,n}
        =
        0
        .
    \end{align*}
    Hence, $\rho_{\gamma,w}$ cannot be contained in an open hemisphere.\\
    A standard argument shows that the length of any curve $c:\Domain \rightarrow \mathbb{S}^n$, whose image is not contained in an open hemisphere, is at least $2 \pi$. 
    For completeness, we carry out the proof.
    If $n=2$, there is nothing left to show. 
    Therefore, we may assume that $n\geq 3$.
    Choose $T\in (0,1)$, such that the arcs $c \vert_{[0,T]}$ and $c\vert_{[T,1]}$ have the same length.
    Without loss of generality, we may assume that $c(0)=(a,0,b), c(T)=(-a,0,b)$, where $a\in \R, b\in \R^{n-2}$.
    Since $\Image(c)$ is not contained in an open hemisphere,
    we know that either $c \vert_{[0,T]}$ or $c\vert_{[T,1]}$ crosses the equator $\{x_1=0\}$.
    We assume that $c \vert_{[0,T]} \cap \{ x_1=0\} \neq \emptyset$ and choose $p \in \rho \vert_{[0,T]} \cap \{ x_1=0\}$.
    Let $R$ be a rotation around the $x_2$-axis with $R((a,0,b))=(-a,0,b)$.
    Therefore, $R(c(T))=c(0)$.
    We now define
    \[
        \Tilde{c}(t)
        =
        \begin{cases}
            c(t), t\in [0,T],\\
            Rc( (t-T) \frac{T-1}{1-T}), t\in [T,1]
        \end{cases}
        .
    \]
    $\Tilde{c}$ is a closed loop in $\mathbb{S}^n$ with
    \[
        \PathLength(\Tilde{c})= 2 \PathLength( c\vert_{[0,T]})
        =
        \PathLength(c)
        .
    \]
    Since $\Tilde{c}$ contains $p$ and $-p$, we know that $\PathLength(\Tilde{c}) \geq 2\pi$ with equality iff $\Tilde{c}$ is a parametrization of a great circle with monotonic angle function.
    This is the case, iff the same holds true for $c$. 
    Using \aref{lem:RhoGreatCircleMonotonicAngleGammaConvex}, we conclude that
    \[
        \PathLength(\rho_{\gamma ,w}) \geq 2\pi \quad \forall w \in \intervaloo{0,1}
    \]  
    with equality for all $w$ iff $\gamma$ is a planar and convex curve.
\end{proof}
\begin{remark}
    Since both statements, \aref{lem:LengthOfGaussMapinU} and \aref{lem:LengthOfGaussMapinW}, are estimates of the length,
    it does not matter whether one chooses the standard differentiation $\partial_u$ (respectively $\partial_w$) and the standard integration $\dd u$ (respectively $\dd w$)
    or the geometric differentiation by arc length $\partial_{s(u)}=\frac{1}{\abs{\gamma'(u)}}\partial_u$ (respectively $\partial_{s(w)}= \frac{1}{\abs{\gamma'(u+w)}}\partial_w$) and the integration w.r.t.\ arc length.
\end{remark}

%% file: RelatedFunctionals.tex
\section{Minimizing functionals depending on the direction of the secant}\label{sec:RelFunct}

In this section we minimize three types of functionals, naturally arising as generalized minorants of the generalized tangent-point energies.
The application to the specific minorants will be discussed in \aref{sec:MinMostTPE}. 
All estimates are consequences of the bounds before.
In order to make the bounds to as many functionals as possible, the functionals $\Funct_1$, $\Funct_2$ and $\Funct_3$ are defined as general as the used methods allow. 
Similar terms as the tangent point energies occur for example in fractional perimeter functionals or the fractional Willmore energies.\\
Furthermore, $\Funct_1$ is deliberately chosen, not to be invariant under reparametrizations, because the minimization implies parametrization by arc length.
The corresponding, invariant functional, can be recovered by simply restricting the domain to curves parametrized by arc length, because the bounds from \aref{sec:WirtingerAndLengthBounds} holds in $W^{1,1}_{\mathrm{i,r}}(\Domain, \R^n)$.
The downside of this general treatment is a more involved characterization of minimizing curves, which is outsourced to the appendix (see \aref{lem: constant derivative of GaussMap} and \aref{rmk: constant derivative of GaussMap}).
Moreover, for each functional $\Funct_1$, $\Funct_2$ and $\Funct_3$ its sharp lower bound is presented as a corollary of the corresponding bound, that still holds true, if one fixes a suitable variable in their definition.
This can be interpreted as minimization in $L^\infty$.\\
As an immediate consequence of \aref{lem:LengthOfGaussMapinW} and \aref{lem:LengthOfGaussMapinU}, we obtain the following.

\begin{lemma}
	\label{lem: convex f wu bound}
	Let $f: \intervalco{0,\infty}\rightarrow \intervalco{0,\infty}$ be convex and monotonically increasing and $\gamma \in W^{1,1}_{\mathrm{i,r}}(\Domain, \mathbb{R}^n)$. Then 
	\begin{equation}
		\label{eq: convex f w bound}
		\int_{\Domain} 
		f(\abs{ \partial_w \GaussMap_\gamma})
		\dd w \geq f(\pi) \quad \text{for almost every } u \in \Domain,
	\end{equation}
	and
	\begin{equation}
		\label{eq: convex f u bound}
		\int_{\Domain} 
			f(\abs{ \partial_u \GaussMap_\gamma})
		\dd u 
		\geq f(2\pi) 
		\quad \text{for all } w \in (0,1),
	\end{equation}
	with equality for the circle.
	If $f$ is strictly convex and \aref{eq: convex f w bound} is an equality for almost every $u \in \Domain$ or \aref{eq: convex f u bound} is an equality for all $w \in (0,1)$, then $\Image (\gamma)$ is a circle, parametrized by constant speed.
\end{lemma}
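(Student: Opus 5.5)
The inequalities follow from Jensen's inequality on the unit interval together with the length bounds of \aref{sec:WirtingerAndLengthBounds}. Fix $u\in\Domain$ such that $\gamma'(u)$ exists and \aref{lem:LengthOfGaussMapinW} applies; this holds for almost every $u$. Since $\Domain$ has measure one and $f$ is convex, Jensen gives
\[
\int_\Domain f(\abs{\partial_w\GaussMap_\gamma(u,w)})\dd w \geq f\Bigl(\int_\Domain \abs{\partial_w\GaussMap_\gamma(u,w)}\dd w\Bigr)=f\bigl(\PathLength(\eta_{\gamma,u})\bigr).
\]
\aref{lem:LengthOfGaussMapinW} gives $\PathLength(\eta_{\gamma,u})\geq\pi$, and $f$ is increasing, so the right-hand side is at least $f(\pi)$. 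The bound in $u$ is identical: fix $w\in(0,1)$, apply Jensen to $\rho_{\gamma,w}$, and use \aref{lem:LengthOfGaussMapinU}, which gives $\PathLength(\rho_{\gamma,w})\geq 2\pi$. If the integral of $f(\abs{\partial\GaussMap_\gamma})$ is infinite, there is nothing to prove. For the circle, take the unit circle of length one parametrized by constant speed. Then $\GaussMap_\gamma(u,w)$ is the unit vector at angle $2\pi u+\pi w+\pi/2$ (up to a fixed rotation), so $\abs{\partial_w\GaussMap_\gamma}\equiv\pi$ and $\abs{\partial_u\GaussMap_\gamma}\equiv2\pi$, and both inequalities are equalities.

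For rigidity, assume $f$ is strictly convex. Strict convexity implies $f$ is strictly increasing wherever it exceeds its minimum, and a convex increasing function that is strictly convex is strictly increasing on $(0,\infty)$. Equality in the chain therefore forces two things. First, the length bound must be an equality, because $f(\PathLength)\geq f(\pi)$ with $f$ strictly increasing on $[\pi,\infty)$. Second, Jensen must be an equality, which for strictly convex $f$ means the integrand is constant.

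In the $w$-case this gives, for almost every $u$, $\PathLength(\eta_{\gamma,u})=\pi$ and $\abs{\partial_w\GaussMap_\gamma(u,\cdot)}\equiv\pi$. In the $u$-case it gives, for all $w$, $\PathLength(\rho_{\gamma,w})=2\pi$ and $\abs{\partial_u\GaussMap_\gamma(\cdot,w)}\equiv2\pi$. The first part of each statement, via \aref{lem:LengthOfGaussMapinW} respectively \aref{lem:LengthOfGaussMapinU}, shows that $\gamma$ is planar and convex.

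The main obstacle is upgrading ``convex with constant $\abs{\partial\GaussMap_\gamma}$'' to ``circle parametrized by constant speed''. The paper defers this to the appendix as \aref{lem: constant derivative of GaussMap} and \aref{rmk: constant derivative of GaussMap}, and I would invoke those. If I had to argue directly, I would use the $u$-case: for convex $\gamma$ the angle $\theta_{\gamma,w}$ is monotone with total variation $2\pi$, so $\abs{\partial_u\GaussMap_\gamma}\equiv2\pi$ means $\theta_{\gamma,w}(u)=\theta_{\gamma,w}(0)+2\pi u$ for every $w$. Letting $w\searrow0$, as in the proof of \aref{lem:RhoGreatCircleMonotonicAngleGammaConvex}, the tangent angle of $\gamma$ is affine in $u$ with slope $2\pi$.

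That alone does not determine the speed, so I would also use a fixed positive $w$. The chord direction from $\gamma(u)$ to $\gamma(u+w)$ rotates uniformly in $u$, and the tangent direction rotates uniformly as well. Comparing, for instance, $w=1/2$ with the limit $w\to0$ should force $\abs{\gamma'}$ to be constant, and hence $\Image(\gamma)$ to be a circle. The $w$-case reduces to the same computation via \eqref{eq:DerivGaussMapW}: constancy of $\abs{\partial_w\GaussMap_\gamma}$ makes the angle of $\eta_{\gamma,u}$ affine in $w$. The resulting functional equation on the tangent angle and speed is the delicate step.
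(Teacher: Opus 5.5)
Your proposal is correct and follows the paper's proof. The paper likewise applies Jensen's inequality together with \aref{lem:LengthOfGaussMapinW} and \aref{lem:LengthOfGaussMapinU}, uses strict convexity (hence strict monotonicity) of $f$ to force $\abs{\partial_z \GaussMap_\gamma}\equiv c_z$ almost everywhere, and then appeals to \aref{lem: constant derivative of GaussMap}. Your explicit intermediate step, deducing from the equality cases of the length lemmas that $\gamma$ is planar and convex, is a useful addition, since that appendix lemma is stated for planar curves and its proof relies on convexity. The speculative ``direct'' argument in your last paragraph is not needed, and as written it is incomplete.
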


\begin{proof}
	Let $f$ and $\gamma$ be given as above. Let $z \in \{u,w\}$ and $c_w=\pi, c_u=2 \pi$. Combining Jensen's inequality with each of \aref{lem:LengthOfGaussMapinW} and \aref{lem:LengthOfGaussMapinU} immediately yields
	\[
		\int_{\Domain} 
			f(\abs{ \partial_z \GaussMap_\gamma}) 
		\dd z
		\geq 
		f\pars*{	
			\int_{\Domain} \abs{ \partial_z \GaussMap_\gamma} \dd z 
		} 
		\geq f(c_z) 
	\]
	for almost all $u \in \Domain$, and for all $w \in (0,1)$, respectively. 
	If $\Image (\gamma)$ is a circle, parametrized by constant speed, then $\Image  (w \mapsto \GaussMap _\gamma (u,w))$ is a great half circle, parametrized with the constant speed $\abs{ \partial_w \GaussMap_\gamma}=\pi$, for all $u \in \Domain$.
	Furthermore, $\Image (w \mapsto \GaussMap _\gamma (u,w))$ is a great circle, parametrized with the constant speed $\abs{ \partial_u \GaussMap_\gamma}=2\pi$, for all $w \in (0,1)$.
	Hence, 
	\[
		\int_{\Domain} 
			f(\abs{ \partial_z \GaussMap_\gamma}) 
		\dd z
	=
		\int_{\Domain} 
			f(c_z) 
		\dd z 
	= 
		f(c_z)
	.
	\]
	Now let $f$ be strictly convex and assume that \aref{eq: convex f w bound} or \aref{eq: convex f u bound} is an equality for almost every $u \in \Domain$ or every $w \in (0,1)$, respectively. 
	Since Jensen's inequality is an equality, iff the integrand is constant almost everywhere, we conclude that $\abs{ \partial_z \GaussMap_\gamma}=c_z$ for almost every $u \in \Domain$ and $w \in (0,1)$.
	By \aref{lem: constant derivative of GaussMap}, this implies that $\Image (\gamma)$ is a circle, parametrized by constant speed.
\end{proof}
If we consider the integral w.r.t.\ both variables, we obtain as a corollary the following theorem.

\begin{theorem}\label{thm:MinI3}
	Let $f: \intervalco{0,\infty}\rightarrow \intervalco{0,\infty}$ be convex and monotonically increasing. Let $z \in \{u,w\}$.
	Consider $\Funct_1:W^{1,1}_{\mathrm{i,r}}\rightarrow \R$ given by
	\[
		\Funct_1(\gamma)
	=
		\int_\Domain\int_\Domain
			f(\abs{ \partial_z \GaussMap_\gamma}) 
		\dd w \dd u
	.
	\]
	Then $\Funct_1 (\gamma) \geq f(c_z) $, where $c_w=\pi$ and $c_u=2\pi$, with equality for the circle.
	If $f$ is strictly convex, then equality holds, iff $\Image (\gamma)$ is a circle, parametrized by constant speed.
\end{theorem}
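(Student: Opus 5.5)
The proof integrates the pointwise bounds of \aref{lem: convex f wu bound} over the remaining variable via Fubini--Tonelli. Since the integrand $f(\abs{\partial_z\GaussMap_\gamma})$ is nonnegative and measurable on $\Domain\times\Domain$, the order of integration in $\Funct_1$ is irrelevant.

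\emph{Case $z=w$.} Write
\[
\Funct_1(\gamma)=\int_\Domain\Big(\int_\Domain f(\abs{\partial_w\GaussMap_\gamma(u,w)})\dd w\Big)\dd u .
\]
By \eqref{eq: convex f w bound}, the inner integral is at least $f(\pi)$ for almost every $u$. Integrating over $u\in\Domain$, which has measure one, gives $\Funct_1(\gamma)\geq f(\pi)$.

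\emph{Case $z=u$.} Integrate first in $u$ instead. By \eqref{eq: convex f u bound}, the inner integral is at least $f(2\pi)$ for every $w\in(0,1)$, so integrating in $w$ yields $\Funct_1(\gamma)\geq f(2\pi)$.

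\emph{Equality for circles.} For a circle parametrized with constant speed, the proof of \aref{lem: convex f wu bound} shows $\abs{\partial_z\GaussMap_\gamma}\equiv c_z$. Hence the integrand is constant and $\Funct_1(\gamma)=f(c_z)$.

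\emph{Rigidity for strictly convex $f$.} Suppose $\Funct_1(\gamma)=f(c_z)$. Then the nonnegative function
\[
g(\cdot)\ceq \int_\Domain f(\abs{\partial_z\GaussMap_\gamma})\dd z - f(c_z),
\]
viewed as a function of the other variable, has integral zero and hence vanishes almost everywhere. In the case $z=w$ this says \eqref{eq: convex f w bound} is an equality for almost every $u$, which is exactly the hypothesis of the rigidity part of \aref{lem: convex f wu bound}. That lemma then gives that $\Image(\gamma)$ is a circle parametrized by constant speed.

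The main obstacle is the case $z=u$, where the lemma requires equality in \eqref{eq: convex f u bound} for \emph{all} $w$, while integration only provides it for almost every $w$. Two routes are available.

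\emph{First route.} Bypass the lemma and run the Jensen argument directly on a.e.\ $w$. For almost every $w$, the chain
\[
\int f\geq f\Big(\int\abs{\partial_u\GaussMap_\gamma}\dd u\Big)\geq f(2\pi)
\]
is an equality. By strict convexity and strict monotonicity of $f$, this forces $\abs{\partial_u\GaussMap_\gamma}=2\pi$ for almost every $(u,w)$. This is the property actually used in the appendix result \aref{lem: constant derivative of GaussMap}, which then gives the circle. (Where $f$ is constant on $[0,2\pi]$, strict convexity excludes that.)

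\emph{Second route.} If \aref{lem: constant derivative of GaussMap} genuinely needs the property for all $w$, upgrade from almost every $w$ to every $w$. For this, use continuity in $w$ of $w\mapsto\PathLength(\rho_{\gamma,w})$ and of $w\mapsto\int_\Domain f(\abs{\partial_u\GaussMap_\gamma})\dd u$ in the relevant topology, together with lower semicontinuity of length under $L^1$/uniform convergence of $\rho_{\gamma,w}$. The a.e.\ identity then extends to every $w$.

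I would try the first route, since $\abs{\partial_u\GaussMap_\gamma}=c_u$ almost everywhere on $\Domain\times\Domain$ appears to be exactly the formulation in the appendix.
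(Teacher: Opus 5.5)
Your proposal is correct and follows the paper's argument. You obtain the theorem by integrating the bounds of \aref{lem: convex f wu bound} over the remaining variable. Rigidity comes from equality in Jensen's inequality forcing $\abs{\partial_z \GaussMap_\gamma}=c_z$ almost everywhere on $\Domain\times\Domain$, after which \aref{lem: constant derivative of GaussMap} applies. Your first route for $z=u$ is exactly what the proof of \aref{lem: convex f wu bound} actually uses, and it correctly handles a point the paper passes over: integrating in $w$ only gives equality for almost every $w$, not for all $w$.
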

For the next bound, we have to restrict the domain to arc length parametrized curves, in order to apply \aref{lem: circle max of Delta gamma |^r}. 
On the other hand, in contrast to \aref{lem: convex f wu bound} we can therefore establish uniqueness of the circle as a minimizer for an arbitrary but fixed $w \in (0,1)$.

\begin{lemma}
	\label{lem:MinI4}
	Let $f,g:\intervalco{0,\infty} \rightarrow \intervalco{0,\infty}$ be strictly monotonically increasing. 
	Let $w \in (0,1)$ and $\gamma \in W^{1,\infty}_{\mathrm{i,a}}$. 
	Suppose that there exists $\theta>1$, such that $f^{1/\theta}$ is convex and $g^{1/(\theta-1)}$ is concave. 
	Then
	\[
	\int_\Domain
		f(\abs{\partial_u \GaussMap_\gamma})
		\frac{1}{g(\abs{\gamma(u+w)-\gamma(u)}^2)}
	\dd u
	\geq 
		f(2\pi) \, 
		\frac{1}{g\left(\pi^{-2} \, 
		\sin (\pi w)^2 \right)} \,
		,
	\]
	where equality holds iff $\Image (\gamma)$ is a circle. 
\end{lemma}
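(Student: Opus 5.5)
We apply the reverse Hölder inequality \aref{lem:RevHoelder} with exponent $r=\theta>1$ on $\Omega=\Domain$. We take $F(u)\ceq f(\abs{\partial_u\GaussMap_\gamma(u,w)})$ and $G(u)\ceq g(\abs{\gamma(u+w)-\gamma(u)}^2)^{-1}$; the latter is finite and nonzero because $\gamma$ is injective, so $\abs{\gamma(u+w)-\gamma(u)}>0$. The lemma then gives
\[
\int_\Domain F\,G\dd u
\geq
\pars*{\int_\Domain f(\abs{\partial_u\GaussMap_\gamma})^{1/\theta}\dd u}^{\theta}
\pars*{\int_\Domain g(\abs{\gamma(u+w)-\gamma(u)}^2)^{\frac{1}{\theta-1}}\dd u}^{-(\theta-1)} .
\]
This splits the problem into a lower bound for the first factor and an upper bound for the integral inside the second factor.

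\emph{First factor.} The function $f^{1/\theta}$ is convex, monotonically increasing and nonnegative. Hence \aref{lem: convex f wu bound}, i.e.\ Jensen combined with \aref{lem:LengthOfGaussMapinU}, yields
\[
\int_\Domain f^{1/\theta}(\abs{\partial_u\GaussMap_\gamma})\dd u\geq f^{1/\theta}(2\pi)
\quad\text{for every } w\in(0,1),
\]
and the first factor is at least $f(2\pi)$.

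\emph{Second factor.} The function $g^{1/(\theta-1)}$ is concave and strictly increasing on $[0,\tfrac12]$; squared secant lengths of a unit-speed curve on $\Domain$ are at most $1/4$, so they lie in this range. By \aref{lem: circle max of Delta gamma |^r},
\[
\int_\Domain g^{\frac{1}{\theta-1}}\pars{\abs{\gamma(u+w)-\gamma(u)}^2}\dd u\leq g^{\frac{1}{\theta-1}}\pars*{\frac{\sin(\pi w)^2}{\pi^2}},
\]
with equality iff $\Image(\gamma)$ is a circle. Raising this to the negative power $-(\theta-1)$ reverses the inequality and gives the factor $1/g(\pi^{-2}\sin(\pi w)^2)$. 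Multiplying the two bounds gives the claimed inequality.

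\emph{Equality.} If $\Image(\gamma)$ is a circle, then since $\gamma$ is parametrized by arc length, $\abs{\partial_u\GaussMap_\gamma}\equiv2\pi$ and $\abs{\gamma(u+w)-\gamma(u)}\equiv\sin(\pi w)/\pi$. Both integrands are then constant and equality holds directly. Conversely, suppose equality holds. Both factors are positive and each satisfies a one-sided bound, so the second-factor bound must itself be an equality. The equality clause of \aref{lem: circle max of Delta gamma |^r} then forces $\Image(\gamma)$ to be a circle for this fixed $w$. The step needing the most care is this equality case. The second factor must carry the uniqueness, because the first-factor bound alone only characterizes convex curves, and only when it holds for all $w$. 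One also has to check that the degenerate situations (a zero or infinite factor) cannot produce a spurious equality. Strict monotonicity of $g$ excludes these, and positivity of $f(2\pi)$ follows from the strict monotonicity of $f$.
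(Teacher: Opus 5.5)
Your proposal is correct and follows the paper's proof. It applies the reverse Hölder inequality (\aref{lem:RevHoelder}) with exponent $\theta$ to $f$ and $1/g$. It then bounds the first factor via \aref{lem: convex f wu bound} for $f^{1/\theta}$ and the second via \aref{lem: circle max of Delta gamma |^r} for $g^{1/(\theta-1)}$, with uniqueness carried by the latter. Your extra checks fill in details the paper leaves implicit: that the squared secant lengths lie in $[0,\tfrac14]$, that both factors are positive and finite, and that circles attain equality.
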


\begin{proof}
	Let $f,g, \theta, w$ be given as above and $\gamma \in W^{1,\infty}_{\mathrm{i,a}}$. 
	Applying \aref{lem:RevHoelder} to $\theta,f,1/g$ we conclude that
	\begin{align*}
		&\int_\Domain
			f(\abs{\partial_u \GaussMap_\gamma})
			\frac{1}{g(\abs{\gamma(u+w)-\gamma(u)}^2)}
		\dd u
	\\
	&\geq
		\left(
			\int_\Domain
				f(\abs{\partial_u \GaussMap_\gamma})^{\frac{1}{\theta}}
			\dd u
		\right)^\theta
		\left(
			\int_\Domain
				g(
					\abs{
						\gamma(u+w)-\gamma(u)
					}^2
				)^{\frac{1}{\theta-1}}
			\dd u
		\right)^{-(\theta-1)}
	\end{align*}
	Applying \aref{lem: convex f wu bound} to $f^{1/\theta}$ and \aref{lem: circle max of Delta gamma |^r} to $g^{1/(\theta -1)}$, noting that $x \mapsto x^{-(\theta -1)}$ is monotonically decreasing, we obtain the desired bound. 
	Uniqueness of the circle as a minimizer follows from  \aref{lem: circle max of Delta gamma |^r}.
\end{proof}
Integrating with respect to $w$ yields the following theorem as a corollary.	

\begin{theorem}\label{thm:MinI4}
    Let $f,g:\intervalco{0,\infty} \rightarrow \intervalco{0,\infty}$ be strictly monotonically increasing. 
	Suppose that there exists $\theta>1$, such that $f^{1/\theta}$ is convex and $g^{1/(\theta-1)}$ is concave. 
	Consider the functional
    $\Funct_2: W^{1,\infty}_{\mathrm{i,a}}\rightarrow \R$ given by
    \[
        \Funct_2(\gamma)
        =
        \int_\Domain
        \int_\Domain
            f(\abs{\partial_u \GaussMap_\gamma})
            \frac{1}{g(\abs{\gamma(u+w)-\gamma(u)}^2)}
        \dd u
        \dd w
    ,
    \]
    Then 
    \[
    \Funct _2 (\gamma) \geq f(2 \pi) \int_0^1 \frac{1}{g\left(\pi^{-2} \, \sin (\pi w)^2 \right)} \dd w
    \,,
    \]
	where equality holds, iff $\Image (\gamma)$ is a circle.
\end{theorem}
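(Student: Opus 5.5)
The plan is to integrate the pointwise-in-$w$ bound of \aref{lem:MinI4} over $w\in\Domain$. For each fixed $w\in(0,1)$, \aref{lem:MinI4} gives
\[
\int_\Domain f(\abs{\partial_u \GaussMap_\gamma(u,w)})\,\frac{1}{g(\abs{\gamma(u+w)-\gamma(u)}^2)}\dd u
\geq f(2\pi)\,\frac{1}{g\left(\pi^{-2}\sin(\pi w)^2\right)}
\]
for every $\gamma\in W^{1,\infty}_{\mathrm{i,a}}$. The set $\{0\}\subset\Domain$ is a null set, so it can be ignored. The integrand of $\Funct_2$ is nonnegative and measurable on $\Domain\times\Domain$, because $\GaussMap_\gamma$ is locally Lipschitz away from the diagonal for arc-length parametrized injective $\gamma$. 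Tonelli's theorem therefore lets us write $\Funct_2(\gamma)$ as the iterated integral, first in $u$ and then in $w$. Integrating the displayed bound in $w$ gives the claimed lower bound, where the right-hand side may be $+\infty$; in that case the inequality is trivially consistent.

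For the equality cases, first take $\Image(\gamma)$ to be a circle. Since $\gamma$ is arc length parametrized, \aref{lem:MinI4} gives equality for every $w$, and integrating yields equality in $\Funct_2$.

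Conversely, suppose $\Funct_2(\gamma)$ equals the lower bound and that this bound is finite; finiteness is needed for the subtraction below. Set
\[
D(w)\ceq \int_\Domain f(\abs{\partial_u\GaussMap_\gamma})\,g(\abs{\gamma(u+w)-\gamma(u)}^2)^{-1}\dd u - f(2\pi)\,g\left(\pi^{-2}\sin(\pi w)^2\right)^{-1}.
\]
Then $D\geq 0$ and $\int_0^1 D(w)\dd w=0$, so $D(w)=0$ for almost every $w$. Choosing any single such $w\in(0,1)$, the equality statement of \aref{lem:MinI4} gives that $\Image(\gamma)$ is a circle.

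The main obstacle is the degenerate case where $\int_0^1 g(\pi^{-2}\sin(\pi w)^2)^{-1}\dd w=\infty$, for example when $g(0)=0$ and $g$ vanishes fast at $0$. There "equality" means both sides are infinite, and uniqueness then fails to be meaningful. I would handle this by noting that the statement implicitly concerns the finite case, or by interpreting it as such. If finiteness must be proven, one would instead use that the circle realizes the bound and restrict attention to it.
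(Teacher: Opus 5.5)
Your proposal is correct and follows the paper's route: the paper obtains this theorem by integrating the pointwise-in-$w$ bound of \aref{lem:MinI4} over $w$. Your measurability/Tonelli remarks and the ``$D(w)=0$ for a.e.\ $w$, then apply \aref{lem:MinI4} at one such $w$'' argument just make the equality case explicit. Your caveat about the degenerate case is also legitimate and is something the paper glosses over. If $\int_0^1 g(\pi^{-2}\sin(\pi w)^2)^{-1}\dd w=\infty$, then every curve has $\Funct_2=\infty$, so the uniqueness claim is only meaningful when the bound is finite, as it is in the paper's application with $p<2q+1$. Your closing suggestion about ``proving finiteness'' cannot work, since finiteness is false in that case, and should simply be dropped.
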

For the following lemma, we again do not need to restrict ourselves to arc length parametrized curves.

\begin{lemma}\label{lem:MinI5}
	Let $\varepsilon >0$, $g \in L^{2+\varepsilon}((-1,1), \intervalco{0, \infty})$. 
	Let $\gamma \in  W^{1,1}_{\mathrm{i,r}}$. Then for almost every $u \in \Domain$
	\[
	\int_\Domain
	g(\langle t_\gamma (u), \GaussMap_\gamma \rangle ) \, \abs{\partial_w \GaussMap_\gamma}
	\dd w 
	\geq 
	\pi \, \int_0^1 g(\cos(\pi w)) 
	\dd w \, ,
	\]
	where equality holds for almost every $u \in \Domain$, iff $\gamma$ is convex. Here $t_\gamma (u) \coloneq \gamma '(u) / \abs{\gamma '(u)}$.
\end{lemma}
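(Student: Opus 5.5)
Fix $u\in\Domain$ such that $\gamma'(u)$ exists, which holds for almost every $u$. By \aref{eq:ComputationLimitEndpointEta}, the curve $\eta_{\gamma,u}$ extends continuously to $[0,1]$ and runs from $t_\gamma(u)$ to $-t_\gamma(u)$. The plan is to treat the integral as a line integral along $\eta=\eta_{\gamma,u}$ and to compare it with the line integral along the great half circle. Let $\alpha(w)\ceq \dist[\mathbb{S}^{n-1}][t_\gamma(u)][\eta(w)]\in[0,\pi]$, so that $\inner{t_\gamma(u),\eta(w)}=\cos\alpha(w)$. The function $\alpha$ is continuous, satisfies $\alpha(0)=0$ and $\alpha(1)=\pi$, and is $1$-Lipschitz with respect to arc length along $\eta$. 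Hence $\abs{\alpha'(w)}\leq\abs{\partial_w\GaussMap_\gamma(u,w)}$ almost everywhere. This uses that $\eta$ is locally absolutely continuous on $(0,1)$, which holds because $\gamma\in W^{1,1}$ and $\abs{\gamma(u+w)-\gamma(u)}$ is bounded away from $0$ on compact subintervals.

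The resulting estimate is
\[
\int_\Domain g(\inner{t_\gamma(u),\GaussMap_\gamma})\abs{\partial_w\GaussMap_\gamma}\dd w
\geq \int_0^1 g(\cos\alpha(w))\abs{\alpha'(w)}\dd w
\geq \int_0^\pi g(\cos s)\dd s
=\pi\int_0^1 g(\cos(\pi w))\dd w .
\]
The second inequality is the area/coarea formula for the absolutely continuous map $\alpha$, namely $\int_0^1 G(\alpha)\abs{\alpha'}\dd w=\int_\R G(s)\,\#\alpha^{-1}(s)\dd s$ with $G=g\circ\cos\geq0$, combined with the fact that $\#\alpha^{-1}(s)\geq1$ for every $s\in(0,\pi)$ by the intermediate value theorem. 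Rigorously, the formula is applied on $[\delta,1-\delta]$ and one then lets $\delta\to0$ by monotone convergence. The integrability assumption $g\in L^{2+\varepsilon}$ is not needed for the inequality, since everything is nonnegative. I expect it to matter only to ensure the right-hand side is finite: $\int_0^\pi g(\cos s)\dd s=\int_{-1}^1 g(x)(1-x^2)^{-1/2}\dd x$, which is finite by Hölder's inequality because $(1-x^2)^{-1/2}\in L^{r}$ for $r<2$.

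For the equality case, suppose first that $\gamma$ is convex. By \aref{lem:Eta convex to great half circle} and \aref{rmk:AngleEtaMonotonic}, for almost every $u$ the curve $\eta$ runs along a great half circle with monotone spherical distance to $t_\gamma(u)$. Then $\abs{\partial_w\GaussMap_\gamma}=\alpha'$ and $\alpha$ is monotone, so both inequalities become equalities. Conversely, suppose equality holds for almost every $u$. Then $\#\alpha^{-1}(s)=1$ for almost every $s$ where $g(\cos s)>0$, and $\abs{\alpha'}=\abs{\eta'}$ almost everywhere where $g(\cos\alpha)>0$. The latter means that $\eta$ moves only radially away from $t_\gamma(u)$, that is, along a geodesic meridian. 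If $g>0$ almost everywhere, this forces $\eta$ to be a monotonically traversed great half circle. Then \aref{lem:Eta convex to great half circle} yields convexity of $\gamma$.

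The main obstacle is this converse when $g$ is allowed to vanish on a set of positive measure. In that case the equality carries no information on those parts of the path. As stated, the "iff" presumably needs $g>0$ almost everywhere; otherwise equality gives only the local geodesic structure where $g\neq0$. I would first try to show that, in this case, meridian pieces in different directions cannot be glued consistently for almost every $u$. If that fails, I would add the hypothesis that $g$ is positive almost everywhere, which is the situation in the intended application to tangent-point minorants.
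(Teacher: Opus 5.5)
Your proof of the inequality is correct. It is a different organization of what is essentially the same computation as the paper's.

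\textbf{Comparison with the paper's route.} The paper uses a calibration. It sets $K(x)=\int_x^1 g(y)(1-y^2)^{-1/2}\,dy$; this is the only place where $g\in L^{2+\varepsilon}$ enters, namely to make $K$ finite. With $h(v)=K(\langle t_\gamma(u),v\rangle)$ it writes $h(-t_\gamma(u))-h(t_\gamma(u))=\int_0^1 Dh(\eta)\eta'\,dw$. It then bounds the integrand by Cauchy--Schwarz, using $|P^\perp_{\eta}t_\gamma(u)|=|P^\perp_{t_\gamma(u)}\eta|$.

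Since $K(\cos s)=\int_0^s g(\cos\sigma)\,d\sigma$, one has $h\circ\eta=\tilde G\circ\alpha$ with your $\alpha$ and $\tilde G(s)=\int_0^s g(\cos\sigma)\,d\sigma$. Moreover $Dh(\eta)\eta'=g(\cos\alpha)\,\alpha'$. So the paper telescopes the exact derivative of $\tilde G\circ\alpha$ and then uses $\alpha'\le|\eta'|$. You use $|\alpha'|\le|\eta'|$ first and replace the telescoping by the area formula plus the intermediate value theorem.

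The paper's version is shorter. Yours handles the endpoints cleanly through the truncation to $[\delta,1-\delta]$; the paper passes over the fact that $Dh$ blows up at $\pm t_\gamma(u)$. Yours also shows correctly that integrability of $g$ is needed only for finiteness of the right-hand side, and it makes the equality analysis transparent.

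\textbf{The equality case as stated.} Your suspicion is right, and it is a defect of the statement rather than of your method. The function $g\equiv 0$ is admissible and gives $0=0$ for every $\gamma$. So the ``iff'' is false as stated, and no gluing argument can repair it in general. The paper's proof has the same hole: its assertion that equality forces $-P^\perp_{\eta}t_\gamma(u)$ to be a positive multiple of $\eta'$ for a.e.\ $w$ ignores the weight $g(\langle t_\gamma(u),\eta\rangle)$. This is harmless in the only application, where $g(x)=(1-x^2)^{\beta/2}$.

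\textbf{Your remedy is not quite enough.} Assuming $g>0$ almost everywhere does not suffice for your argument when $n\ge 3$.
\begin{itemize}
\item Suppose $g(\cos s_0)=0$ for a single $s_0\in(0,\pi)$. Then $\alpha$ may stay equal to $s_0$ on a $w$-interval of positive length. Meanwhile $\eta$ moves along the latitude sphere $\{v:\langle t_\gamma(u),v\rangle=\cos s_0\}$ at zero cost.
\item Concretely, take a meridian arc from $t_\gamma(u)$ to this latitude, then a latitude excursion, then a different meridian arc down to $-t_\gamma(u)$. This path attains equality without lying in a great half circle.
\item Thus $\{w: g(\cos\alpha(w))=0\}$ can have positive measure although $\{g=0\}$ is null. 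Your step ``this forces $\eta$ to be a monotonically traversed great half circle'' therefore fails pointwise in $u$.
\end{itemize}

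\textbf{Fix.} Assume instead $g>0$ on all of $(-1,1)$.
\begin{itemize}
\item Then $|\eta'|=|\alpha'|$ a.e.\ on $\{0<\alpha<\pi\}$. On $\{\alpha\in\{0,\pi\}\}$ both derivatives vanish a.e., because $\eta$ is constant there.
\item Multiplicity one for a.e.\ $s\in(0,\pi)$ makes $\alpha$ nondecreasing. Indeed, if $\alpha(w_1)>\alpha(w_2)$ for some $w_1<w_2$, every level between them is attained at least three times.
\item Hence $\eta$ has length $\int_0^1\alpha'\,dw=\pi$ between antipodal points, so it is a monotonically traversed great half circle.
\end{itemize}
Then \aref{lem:Eta convex to great half circle} gives convexity.
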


\begin{proof}
	Let $g$ and $\gamma$ be given as above. We fix a $u \in \Domain$ such that $\gamma '(u)$ exists, by assumption it does not vanish. 
	Referring to \aref{eq:def eta} and abbreviating $\eta= \eta_{\gamma ,u}$, we have $\eta (0)=t_\gamma (u)$ and $\eta (1)=- t_\gamma (u)$.
	\\
	Let $K(x)\ceq \int_{x}^1 g(y) \, (1-y^2)^{-\frac{1}{2}} \dd y$ be a primitive of $x\mapsto - g(x) \, (1-x^2)^{-\frac{1}{2}}$.
	Since $g \in L^{2+\varepsilon}$ and $x \mapsto (1-x^2)^{-\frac{1}{2}} \in L^{(2+\varepsilon)^\ast}$, where $(2+\varepsilon)^\ast <2$ denotes the Hölder conjugate of $2+ \varepsilon$, the primitive is well-defined as a consequence of Hölder's inequality.
	We define $h(v)\ceq K(\inner{t_\gamma (u), v})$, $h: \mathbb{S}^{n-1} \rightarrow \R$.
	Then by substituting $y=\cos(\pi w)$ we compute
	\begin{align*}
		h(-t_\gamma (u))-h(t_\gamma (u))
	=&
		K(-1)-K(1)
		=
		\int_{-1}^1 
			g(y) \, 
			(1-y^2)^{-\frac{1}{2}} 
		\dd y
	\\
	=&
		\int_0^1 
			g(\cos(\pi w)) 
			(1-\cos( \pi w)^2)^{-\frac{1}{2}}
			\, \sin( \pi w) \, \pi
		\dd w
	\\
	=&
	\pi \, 
		\int_0^1 
			g(\cos(\pi w)) 
		\dd w
	\end{align*}
	On the other hand one computes 
	$
		Dh(v) w
		=
		- g(\inner{t_\gamma (u), v}) \, 
		(1- \inner{t_\gamma (u), v}^2)^{-\frac{1}{2}}
		\inner{t_\gamma (u), w}	
	$.	
	Hence,
	\begin{align*}
		h(-t_\gamma (u))-h(t_\gamma (u))
		=&
		\int_{0}^{1}
		Dh(\eta)(\eta')
		\dd w
		\\
		=&
		\int_0^1 
		g(\inner{t_\gamma (u), \eta (w)}) \,
		\abs{P_{\gamma'(u)}^\perp \eta (w)}^{-1}
		\inner{- t_\gamma (u), \eta'(w)
		}
		\dd w
		\\
		=&
		\int_0^1 
		g(\inner{t_\gamma (u), \eta (w)}) \,
		\abs{P_{\gamma'(u)}^\perp \eta (w)}^{-1}
		\inner{-P_{\eta (w)} ^\perp t_\gamma (u), \eta'(w)
		}
		\dd w
		\\
		\leq&
		\int_0^1 
		g(\inner{t_\gamma (u), \eta(w)}) \,
		\abs{\eta'(w)}
		\dd w
		.
	\end{align*}
	In the above, equality holds, iff $-P_{\eta}\gamma'(u)$ is a positive multiple of $ \eta '$ for almost every $w$.
	This is only the case, if $\Image (\eta)$ is a great half circle and $\dist[\mathbb{S}^{n-1}][\eta(0)][\eta(w)]$ is monotonically increasing in $w$.
	Invoking \aref{lem:Eta convex to great half circle} and \aref{rmk:AngleEtaMonotonic}, equality holds for a.e. $u \in \Domain$, iff $\gamma$ is convex.
\end{proof}
By integrating with respect to $u$, we obtain as a corollary the following theorem.

\begin{theorem}\label{thm:MinI5}
	Let $\varepsilon >0$, $g \in L^{2+\varepsilon}((-1,1), \intervalco{0, \infty})$. Consider the functional  $\Funct_3: W^{1,1}_{\mathrm{i,r}}\rightarrow \R$ given by
	\[
	\Funct_3(\gamma)
	=
	\int_\Domain
	\int_\Domain
	g(\langle t_\gamma (u), \GaussMap_\gamma \rangle ) \, \abs{\partial_w \GaussMap_\gamma}
	\dd w \dd u
	\]
	Then 
	\[
	\Funct_3 (\gamma) \geq \pi \, \int_0^1 g(\cos(\pi w)) 
	\dd w \, ,
	\]
	where equality holds, iff $\gamma$ is convex. 
	Here $t_\gamma (u)  \coloneq \gamma '(u) / \abs{\gamma '(u)}$.
\end{theorem}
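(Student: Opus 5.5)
The plan is to obtain \aref{thm:MinI5} by integrating the pointwise estimate of \aref{lem:MinI5} over $u\in\Domain$.

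First, fix $\gamma\in W^{1,1}_{\mathrm{i,r}}$. Since $\gamma$ is absolutely continuous and regular, $\gamma'(u)$ exists and is nonzero for almost every $u$. Consequently $t_\gamma(u)$ is defined almost everywhere, and the integrand of $\Funct_3$ is a nonnegative measurable function on $\Domain\times\Domain$. By Tonelli's theorem we may write
\[
	\Funct_3(\gamma)=\int_\Domain \Big(\int_\Domain g(\inner{t_\gamma(u),\GaussMap_\gamma(u,w)})\,\abs{\partial_w\GaussMap_\gamma(u,w)}\dd w\Big)\dd u ,
\]
where the value $+\infty$ is allowed. By \aref{lem:MinI5}, the inner integral is bounded below by the constant $C_g\ceq\pi\int_0^1 g(\cos(\pi w))\dd w$ for almost every $u$. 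This constant is finite: $g\in L^{2+\varepsilon}$ and the substitution $y=\cos(\pi w)$ rewrite it as $\int_{-1}^1 g(y)(1-y^2)^{-1/2}\dd y$, which is finite by Hölder's inequality. Integrating over $\Domain$, which has measure $1$, gives $\Funct_3(\gamma)\geq C_g$.

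For the equality case, first suppose $\gamma$ is convex. Then \aref{lem:MinI5} gives equality of the inner integral for almost every $u$, and integrating yields $\Funct_3(\gamma)=C_g$. Conversely, suppose $\Funct_3(\gamma)=C_g<\infty$. Set $F(u)$ to be the inner integral. Then $F(u)-C_g\geq 0$ almost everywhere and $\int_\Domain (F(u)-C_g)\dd u=0$, so $F(u)=C_g$ for almost every $u$. The equality characterization in \aref{lem:MinI5} then shows that $\gamma$ is convex.

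The only delicate point is measurability of the integrand, needed for Tonelli. I would argue it as follows. The map $(u,w)\mapsto\GaussMap_\gamma(u,w)$ is continuous away from $w\in\{0,1\}$. Its partial derivative $\partial_w\GaussMap_\gamma$ is given almost everywhere by the explicit formula \eqref{eq:DerivGaussMapW} in terms of $\gamma'(u+w)$, so it is measurable. Likewise $t_\gamma$ is measurable as a quotient of measurable functions. Composing with the Borel function $g$ then gives a measurable integrand, and everything else reduces directly to the previous lemma.
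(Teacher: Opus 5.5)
Your proposal is correct and follows the paper's route exactly: the paper obtains \aref{thm:MinI5} as a one-line corollary of \aref{lem:MinI5} by integrating over $u$. Your Tonelli and measurability remarks, together with the observation that the nonnegative excess $F-C_g$ has zero integral and therefore vanishes almost everywhere, just make that deduction explicit. Note that both your argument and the paper's inherit the equality characterization from \aref{lem:MinI5}, which tacitly needs $g$ not to vanish: for $g\equiv 0$, every curve attains equality.
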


%% file: MinTPE.tex
\section{Minimizing most tangent-point energies}\label{sec:MinMostTPE}

The goal of this chapter is proving that $\TPE^{(p,q)}$ from \aref{eq:DefTPpq} is uniquely minimized by round circles for wide ranges of $p$ and $q$.
In order to do so, we will use the relation between the unit secant $\GaussMap _{\gamma}$ and the integrand of $\TPE^{(p,q)}$ (see \aref{eq:DerivGaussMapW}, \aref{eq:DerivGaussMapU}). 
More concretely, the strategy is as follows. 
For fixed $p$ and $q$ -upon restriction to arc length parametrized curves- we will first present a suitable minorant for $\TPE^{(p,q)}$, touching it at the circle, 
i.e. a functional $H$, satisfying
\[
	 H \leq \TPE ^{(p,q)} \text{ on } W^{1,\infty}_{\mathrm{i,a}}
	 \text{ and }
	 H(\gamma _C)=\TP^{(p,q)}(\gamma _C)
	 .
\]
In the above, $\gamma_C$ denotes an arbitrary circle with circumference $1$, parametrized by arc length.\\
The minorant $H$ will be a special case of $\Funct_1$, if $p=2q$, of $\Funct_2$, if $p>2q$ and of $\Funct_3$, if $p=q+1$.
Invoking our findings from \aref{sec:RelFunct}, we then conclude that such $H$ are minimnized by circles. 
By the standard argument
\begin{equation}
	\label{eq: minorant argument}
	\TPE ^{(p,q)} (\gamma _C) = H(\gamma _C) \leq 
	H(\gamma ) \leq \TPE ^{(p,q)} (\gamma ) \quad \forall \gamma \in W^{1, \infty}_{\mathrm{i,a}}
	,
\end{equation} 
$\TPE^{(p,q)}$ is then also minimized by the circle among arc length parametrized curves. Lastly, since the tangent-point energies are invariant under reparametrizations and $\TP^{(p,q)}(\lambda \gamma)= \lambda^{q+2-p}\TP^{(p,q)}(\gamma)$, this result extends to all regular curves with fixed length. This step will be ommited in the following theorems.
For $p=q+1$ some additional effort is needed, in order to show uniqueness of the circle as a global minimizer, as $\Funct_3$ is minimized by any convex curve. \\
We start by reestablishing the already known results for $p=2q$.

\subsection{The geometric case $p=2q$}
\label{subsection:The geometric case $p=2q$}
This choice of parameters is called "geometric", since the generalized tangent-point energy simplifies to
\[
    \TPE^{(2q,q)}(\gamma)
    =
    \iint_{\Domain \times \Domain}
    \frac{2^q}{\text{r}_\text{TP}(\gamma)(x,y)^q}
    \abs{\gamma'(x)}
    \abs{\gamma'(y)}
    \dd(x,y) 
    = 2^q \, \TP ^{(q)}(\gamma)
    .
\]
It is known that the minimizers $\TPE^{(2q,q)}$ are the circles for $q\geq1$.
The proof occured in \cite{volkmann1}, as a byproduct of a monotonicity formula.
It is also pointed out by the author, that Blatt found a more straightforward way to prove \aref{thm: circle min p=2q}.
Using our techniques, we were able to slightly improve the required regularity.
\begin{theorem}
\label{thm: circle min p=2q}
    Let $q \geq 1$. 
    Then $\TP^{(2q,q)}(\gamma) \geq \pi ^q$ for every $\gamma \in W_{\mathrm{i,a}}^{1,\infty}$.
    Furthermore, if $q>1$ equality holds iff $\Image( \gamma)$ is a circle and if $q=1$ equality holds iff $\gamma$ is convex.
\end{theorem}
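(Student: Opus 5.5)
The plan is to write $\TP^{(2q,q)}$ as a special case of $\Funct_1$ with $z=w$ and $f(x)=x^q$. For $\gamma\in W^{1,\infty}_{\mathrm{i,a}}$ we have $\abs{\gamma'}\equiv 1$. Hence \aref{eq:DerivGaussMapW} gives
\[
\abs{\partial_w \GaussMap_\gamma(u,w)}=\frac{\abs{P^\perp_{\GaussMap_\gamma(u,w)}\gamma'(u+w)}}{\abs{\gamma(u+w)-\gamma(u)}}.
\]
We also need the symmetry of the tangent-point integrand. Writing $\Delta=\gamma(u+w)-\gamma(u)$, one has $\abs{P^\perp_{\gamma'(u+w)}\Delta}=\abs{\Delta}\,\abs{P^\perp_{\Delta/\abs{\Delta}}\gamma'(u+w)}$, since both sides equal $\abs{\Delta}$ times the sine of the angle between $\Delta$ and the unit vector $\gamma'(u+w)$. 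Consequently
\[
\frac{\abs{P^\perp_{\gamma'(u+w)}\Delta}^q}{\abs{\Delta}^{2q}}=\abs{\partial_w\GaussMap_\gamma(u,w)}^q .
\]

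The integrand of \aref{eq:DefTPpq} uses the projection at the base point $u$, not at $u+w$. To fix this, substitute $(u,w)\mapsto(u+w,-w)$ on $\Domain\times\Domain$. This map is measure preserving, and it swaps the roles of the two points while leaving $\abs{\Delta}$ unchanged. Therefore
\[
\TP^{(2q,q)}(\gamma)=\int_\Domain\int_\Domain \abs{\partial_w\GaussMap_\gamma(u,w)}^q\dd w\dd u=\Funct_1(\gamma)\quad\text{with } f(x)=x^q,\ z=w .
\]
Here $f$ is convex, increasing and nonnegative on $[0,\infty)$, so \aref{thm:MinI3} yields $\TP^{(2q,q)}(\gamma)\ge f(\pi)=\pi^q$. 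Equality holds for the circle. For $q>1$, $f$ is strictly convex, so \aref{thm:MinI3} gives equality iff $\Image(\gamma)$ is a circle.

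The case $q=1$ needs a separate argument, since \aref{thm:MinI3} gives no uniqueness information there. We argue directly with \aref{lem:LengthOfGaussMapinW}: $\TP^{(2,1)}(\gamma)=\int_\Domain \PathLength(w\mapsto\GaussMap_\gamma(u,w))\dd u\ge\pi$. Since each inner length is at least $\pi$ for a.e.\ $u$, the integral equals $\pi$ iff the length equals $\pi$ for a.e.\ $u$. By \aref{lem:LengthOfGaussMapinW}, this holds iff $\gamma$ is convex.

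The main obstacle is the bookkeeping step identifying the integrand with $\abs{\partial_w\GaussMap_\gamma}^q$. This requires verifying the projection identity and justifying the change of variables on the torus. A secondary point is that the formula \aref{eq:DerivGaussMapW} for $\partial_w\GaussMap_\gamma$ only holds a.e. This is enough, since $\gamma$ is Lipschitz and the quotient is locally absolutely continuous away from $w\in\{0,1\}$.
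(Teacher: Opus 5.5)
Your proof is correct and is essentially the argument the paper gives first. Blatt's identity $\abs{\partial_w\GaussMap_\gamma}=\pars{2\mathrm{r}_{\mathrm{TP}}}^{-1}$ turns $\TP^{(2q,q)}$ into $\Funct_1$ with $z=w$ and $f(x)=x^q$, and then \aref{thm:MinI3} closes the case $q>1$ while \aref{lem:LengthOfGaussMapinW} closes the case $q=1$. Your symmetrization $(u,w)\mapsto(u+w,-w)$ only makes explicit the base-point swap the paper leaves implicit.

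The paper also writes out an alternative proof via the minorant $G^{(2q,q)}$, which is built from $\partial_u\GaussMap_\gamma$ and the closed-curve bound of \aref{lem:LengthOfGaussMapinU}. It does so because that route is the one that extends to $p>2q$.
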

The original proof relies on Blatt's observation that $\abs{\partial_w \GaussMap_\gamma}(u,w)=\pars*{2\text{r}_{\text{TP}}(\gamma)(u+w,u)}^{-1}$.
By the before theorems, this can be seen as an immediate consequence of \aref{thm:MinI3} with the choice $z=w$, thus relying on the length bound in \aref{lem:LengthOfGaussMapinW}. \\
Since $x\mapsto x^q$ is convex for $q\geq 1$ and strictly convex for $q>1$, we immediately conclude that
\[
    \TPE^{(2q,q)}(\gamma)
    \geq
    \pi^q
	\text{ for all }
	\gamma\in W^{1, 1}_{\mathrm{i,r}}\cap \{ \ArcLength(\gamma)=1\}.
\]
For $q>1$, the unique minimizer is the standard circle (up to similarities) and for $q=1$ all planar, convex embeddings attain the minimium.
\\
We now present an alternative proof for \aref{thm: circle min p=2q}, relying instead on the length bound on $u\mapsto \GaussMap_\gamma(u,w)$ (\aref{lem:LengthOfGaussMapinU}).
This also turns out to be the right approach for most $p>2q$. 
To this end, let $\gamma\in W^{1,\infty}_{\mathrm{i,a}}(\Domain, \R^n)$ and $w\in (0,1)$. We define
\begin{equation*}
    G^{(p,q)}_w(\gamma)
    \ceq
    \int_{\Domain}
        \left(
            \frac{1}{2}
            \abs{\frac{\partial}{\partial u} \GaussMap _\gamma (u,w)}
        \right)^q
            \abs{\gamma(u+w)-\gamma(u)}^{2q-p}
    \dd u
	\text{ and }
	G^{p,q}(\gamma)= \int_0^1 G_w^{(p,q)}(\gamma) \dd w
\end{equation*}
We observe, that for $q=1$, $G^{(p,q)}(\gamma)=\TP^{(p,q)}(\gamma)$ for any convex $\gamma$.

\begin{lemma}
\label{lem: G minorant for TP}
	Let $q \geq1$, $p>0$ and $\gamma \in W^{1,\infty}_{i,a}$. Then
	$G^{(p,q)}(\gamma) \leq \TP^{(p,q)}(\gamma) $, with equality if $\Image (\gamma)$ is a circle.
\end{lemma}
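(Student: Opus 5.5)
The plan is to compare the integrands of $G^{(p,q)}$ and $\TP^{(p,q)}$ pointwise, using only the triangle inequality and the convexity of $x\mapsto x^q$, and then to symmetrize in $(u,w)$. Fix $\gamma\in W^{1,\infty}_{\mathrm{i,a}}$ and write $\Delta\ceq\gamma(u+w)-\gamma(u)$ and $\GaussMap=\GaussMap_\gamma(u,w)=\Delta/\abs{\Delta}$. Since $\abs{\gamma'}\equiv 1$, the integrand of $\TP^{(p,q)}$ is $\abs{P^\perp_{\gamma'(u)}\Delta}^q\abs{\Delta}^{-p}$. The first step is the elementary identity, valid for unit vectors,
\[
\abs{P^\perp_{\gamma'(u)}\GaussMap}=\abs{P^\perp_{\GaussMap}\gamma'(u)} .
\]
Both sides equal the sine of the angle between $\gamma'(u)$ and $\GaussMap$. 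This identity rewrites the $\TP$ integrand as $b^q\abs{\Delta}^{q-p}$ with $b\ceq\abs{P^\perp_{\GaussMap}\gamma'(u)}$.

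Next I use \aref{eq:DerivGaussMapU} and the triangle inequality:
\[
\tfrac12\abs{\partial_u\GaussMap}\leq \tfrac{1}{2\abs{\Delta}}\pars{a+b},
\qquad a\ceq\abs{P^\perp_{\GaussMap}\gamma'(u+w)} .
\]
Convexity of $x\mapsto x^q$ for $q\geq1$ gives $\pars{\frac{a+b}{2}}^q\leq\frac12(a^q+b^q)$. Multiplying by $\abs{\Delta}^{2q-p}$, the integrand of $G^{(p,q)}$ is therefore bounded by
\[
\tfrac12\pars{a^q+b^q}\abs{\Delta}^{q-p} .
\]

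The $b$-term integrates, over $(u,w)\in\Domain\times(0,1)$, to exactly $\TP^{(p,q)}(\gamma)$. For the $a$-term, the same sine identity gives
\[
a^q\abs{\Delta}^{q-p}=\abs{P^\perp_{\gamma'(u+w)}\pars{\gamma(u)-\gamma(u+w)}}^q\abs{\gamma(u)-\gamma(u+w)}^{-p}.
\]
This is the $\TP$ integrand evaluated at base point $u+w$ with offset $1-w\equiv-w$. The change of variables $(u,w)\mapsto(u+w,1-w)$ is measure preserving on $\Domain\times(0,1)$, so the $a$-term also integrates to $\TP^{(p,q)}(\gamma)$. Adding the two halves yields $G^{(p,q)}\leq\TP^{(p,q)}$. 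The only point needing care is measurability and finiteness. Both sides are nonnegative, so Tonelli's theorem justifies the substitution, and the inequality is trivially true if $\TP^{(p,q)}(\gamma)=\infty$.

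For equality on the circle, I check that both inequalities are equalities. The convexity step is an equality when $a=b$, and the triangle inequality is an equality when $P^\perp_\GaussMap\gamma'(u+w)$ and $-P^\perp_\GaussMap\gamma'(u)$ point in the same direction. On a round circle both hold: the chord makes equal angles with the tangents at its two endpoints, by reflection symmetry across the perpendicular bisector of the chord, and the normal components of the two tangents point to opposite sides of the chord. The step I expect to be the main (though mild) obstacle is getting the signs right in the triangle inequality step and correctly identifying the $a$-term as the $\TP$ integrand under the $(u,w)\mapsto(u+w,1-w)$ symmetry.
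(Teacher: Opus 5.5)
Your proposal is correct and is essentially the paper's proof run in the opposite direction. The paper starts from $\TP^{(p,q)}$, splits it into the two symmetric halves (projection onto the normal space at $\gamma'(u)$ versus at $\gamma'(u+w)$), and then uses convexity of $x\mapsto x^q$, the identity $\abs{P^\perp_{\gamma'}\GaussMap_\gamma}=\abs{P^\perp_{\GaussMap_\gamma}\gamma'}$ and the triangle inequality to reach $\abs{\partial_u\GaussMap_\gamma}$ via \aref{eq:DerivGaussMapU}. You start from the integrand of $G^{(p,q)}$ and make the symmetrization explicit through the measure-preserving involution $(u,w)\mapsto(u+w,1-w)$ and Tonelli, and your equality argument for the circle, $P^\perp_{\GaussMap_\gamma}\gamma'(u+w)=-P^\perp_{\GaussMap_\gamma}\gamma'(u)$, is the same as the paper's.
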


\begin{proof}
	Let $\gamma \in W^{1,\infty}_{\mathrm{i,a}}$.
	If $\TP^{(p,q)}(\gamma )= \infty$, there is nothing to show.
	Hence, we may  assume that $\TP ^{(p,q)}(\gamma )< \infty$. 
	Using \aref{eq:DerivGaussMapU}, convexity and a triangle inequality, we compute the following.
		\begin{align*}
			&\TP ^{(p,q)}(\gamma) 
			=
			\int _\Domain \int _\Domain 
				\fdfrac{\abs{P^\perp_{\gamma'(u)}(\gamma(u+w)-\gamma(u))}^q}
				{\abs{\gamma(u+w)-\gamma(u)}^p}
			\dd w \dd u
			\\
			=&
				\frac{1}{2} 
				\int _\Domain \int _\Domain 
					\fdfrac{\abs{P^\perp_{\gamma'(u+w)}(\gamma(u+w)-\gamma(u))}^q}
					{\abs{\gamma(u+w)-\gamma(u)}^p}
				\dd w \dd u
			+
				\frac{1}{2} 
				\int _\Domain \int _\Domain 
					\fdfrac{\abs{P^\perp_{\gamma'(u)}(\gamma(u+w)-\gamma(u))}^q}
					{\abs{\gamma(u+w)-\gamma(u)}^p}
				\dd w \dd u
			\\
			=&
			\frac{1}{2}
			\int _\Domain \int _\Domain 
			\left[ 
				\fdfrac{\abs{P^\perp_{\gamma'(u+w)}(\gamma(u+w)-\gamma(u))}^q}
				{\abs{\gamma(u+w)-\gamma(u)}^{2q}}
			+ 
				\fdfrac{\abs{P^\perp_{\gamma'(u)}(\gamma(u+w)-\gamma(u))}^q}
				{\abs{\gamma(u+w)-\gamma(u)}^{2q}}
			\right] 
			\abs{\gamma (u+w) - \gamma (u)}^{2q-p}
			\dd w \dd u
			\\
			\geq&
				\int _\Domain \int _\Domain 
				\left[ 
					\frac{1}{2}  	
					\fdfrac{\abs{P^\perp_{\GaussMap_\gamma (u+w,u)}\gamma'(u+w)}}{\abs{\gamma(u+w)-\gamma(u)}}
				\, + \, 
					\frac{1}{2}  	
					\fdfrac{\abs{P^\perp_{\GaussMap _\gamma (u+w,u)}\gamma '(u)}}{\abs{\gamma(u+w)-\gamma(u)}}
				\right] ^q
			 	\abs{\gamma (u+w) - \gamma (u) }^{2q-p}
				\dd w \dd u
			\\
			\geq&
				\int _\Domain \int _\Domain 
				\left[ 
					\frac{1}{2}  	
					\fdfrac{\abs{P^\perp_{\GaussMap _\gamma (u+w,u)}(\gamma'(u+w)- \gamma (u))}}
					{\abs{\gamma(u+w)-\gamma(u)}}
				\right] ^q
				\abs{\gamma (u+w) - \gamma (u) }^{2q-p}
				\dd w \dd u
			\\
			=&
				\int _\Domain \int _\Domain 
				\left(
					\frac{1}{2}
					\abs{\frac{\partial}{\partial u} \GaussMap (u,w)}
				\right)^q
				\abs{\gamma(u+w)-\gamma(u)}^{2q-p}
				\dd w \dd u
			=G^{(p,q)}(\gamma)
		\end{align*}
	If $\Image (\gamma)$ is a cirlce, both inequalities are equalities, 
	because $P^\perp_{\GaussMap _{\gamma} (u+w,u)}\gamma '(u+w)=-P^\perp_{\GaussMap _{\gamma} (u+w,u)}\gamma'(u)$ for all $u,w \in \Domain$.
\end{proof}
Being a special case of $\Funct_1$, the minorant $G^{(2q,q)}$ is minimized by the circle.
\begin{lemma}
\label{lem:gMinP=2q} 
    Let $\gamma \in W^{1,\infty}_{\mathrm{i,a}}$ and $q\geq 1$. 
    Then $G_w^{(2q,q)}(\gamma) \geq \pi^q$ for every $w \in (0,1)$ and $G^{(2q,q)}(\gamma) \geq \pi^q$.
    Furthermore, $G_w^{(2,1)}(\gamma)=\pi$ for all $w$ iff $\gamma $ is convex and for $q>1$, $G_w^{(2q,q)}(\gamma)=\pi^q$ for all $w$ iff $\Image(\gamma)$ is a circle.
\end{lemma}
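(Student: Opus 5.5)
For $p=2q$ the weight $\abs{\gamma(u+w)-\gamma(u)}^{2q-p}$ equals one, so
\[
G_w^{(2q,q)}(\gamma)=\int_\Domain f\pars*{\abs{\partial_u \GaussMap_\gamma(u,w)}}\dd u ,
\qquad f(x)\ceq \pars*{\tfrac{x}{2}}^q .
\]
The function $f$ is convex and monotonically increasing on $\intervalco{0,\infty}$, and strictly convex if $q>1$. The plan is to apply \aref{lem: convex f wu bound}, i.e.\ inequality \eqref{eq: convex f u bound}, for each fixed $w\in(0,1)$. This gives $G_w^{(2q,q)}(\gamma)\geq f(2\pi)=\pi^q$. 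Integrating over $w\in(0,1)$ then yields $G^{(2q,q)}(\gamma)\geq \pi^q$.

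\emph{Case $q>1$.} By \aref{lem: convex f wu bound}, equality in \eqref{eq: convex f u bound} for all $w$ forces $\Image(\gamma)$ to be a circle parametrized by constant speed. Conversely, for a circle $\abs{\partial_u\GaussMap_\gamma}\equiv 2\pi$, so $G_w^{(2q,q)}(\gamma)=\pi^q$ for every $w$, as already noted in the proof of that lemma.

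\emph{Case $q=1$.} Here $G_w^{(2,1)}(\gamma)=\tfrac12\PathLength(u\mapsto\GaussMap_\gamma(u,w))$, so the bound is simply \aref{lem:LengthOfGaussMapinU} divided by two. The equality statement, that equality holds for all $w$ iff $\gamma$ is convex, is exactly the characterization in \aref{lem:LengthOfGaussMapinU}. If $\gamma$ is convex, \aref{lem:RhoGreatCircleMonotonicAngleGammaConvex} shows that $\rho_{\gamma,w}$ traverses a great circle once with monotone angle, so its length is exactly $2\pi$. The converse direction is the content of \aref{lem:LengthOfGaussMapinU}.

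The only delicate point is the rigidity for $q>1$. It rests on \aref{lem: constant derivative of GaussMap} in the appendix, which asserts that $\abs{\partial_u\GaussMap_\gamma}\equiv 2\pi$ forces a round circle. Here we may simply cite it through \aref{lem: convex f wu bound}. Since $\gamma\in W^{1,\infty}_{\mathrm{i,a}}\subset W^{1,1}_{\mathrm{i,r}}$, all hypotheses of the cited lemmas are met.
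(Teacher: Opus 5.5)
Your proposal is correct and follows the paper's proof. For $q=1$ you reduce to \aref{lem:LengthOfGaussMapinU}; for $q>1$ you apply \aref{lem: convex f wu bound}, whose rigidity rests on \aref{lem: constant derivative of GaussMap}; then you integrate in $w$. Your choice $f(x)=(x/2)^q$ is also slightly cleaner than the paper's $f(x)=x^q$, since it gives $f(2\pi)=\pi^q$ directly without tracking the factor $2^{-q}$.
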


\begin{proof}
	  We have
	  \[
	  G^{(2q,q)}_w(\gamma)=
	  \int_{\Domain}
	  \left(
	  \frac{1}{2}
	  \abs{\frac{\partial}{\partial u} \GaussMap (u,w)}
	  \right)^q
	  \dd u
	  \]
      For $q=1$, we may simply refer to \aref{lem:LengthOfGaussMapinU} in order to conclude that $G^{(2,1)}_w$ is minimal iff $\gamma$ is convex. 
	  For $q>1$, we refer to \aref{lem: convex f wu bound} with the choice $f(x)=x^q$ to see that $G^{(2q,q)}_w$ is uniquely minimized by the circle. 
	  Integrating with respect to $w$ yields the result for $G^{(2q,q)}$.
\end{proof}
\aref{thm: circle min p=2q} now follows from combining \aref{eq: minorant argument} with \aref{lem: G minorant for TP} and \aref{lem:gMinP=2q}.

%% file: GlobaleExtremavonTP.tex
\subsection{The case $p>2q$}
\label{section: The case p>2q}
The integrand of $G^{(2q,p)}_w$ consists of the two factors $\abs{\frac{\partial}{\partial u} \GaussMap _\gamma (u,w)}^q$ and $\abs{\gamma (u+w) - \gamma (u)}^{2q-p}$. 
If $p>2q$, then the power of the second factor is negative. 
Hence, upon integrating any of them separately with respect to $u$, the integrals are minimized by the circle (see \aref{lem:gMinP=2q} and~\cite{ABRAMS2003381}). 
However, in order to establish $G^{(p,q)}$ as a special case of $\Funct _2$, a suitable negative power of the second factor has to be a concave function. 
This yields the bound $p \leq4q-2$.
\begin{lemma}
\label{lem: TP_w min p >= 2q}
    Let $\gamma \in W^{1,\infty}_{\mathrm{i,a}}$, $q>1$, $p \in \intervaloc{2q, 4q-2}$ and $w \in (0,1)$ be arbitrary. 
    Then 
    \[ 
        G^{(p,q)}_w(\gamma) \geq \pi^{p-q} \sin(\pi w)^{2q-p} 
    \]
    and if $p<2q+1$, then
    \[
        G^{(p,q)}(\gamma) \geq \pi^{p-q} \int_0^1 \sin(\pi w)^{2q-p} \dd w 
        ,
    \]
    with equality iff $\Image(\gamma)$ is a circle
\end{lemma}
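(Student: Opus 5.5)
The plan is to realize $G^{(p,q)}_w$ as an instance of the integrand of $\Funct_2$ and then invoke \aref{lem:MinI4}. Write
\[
G^{(p,q)}_w(\gamma)=\int_\Domain f(\abs{\partial_u\GaussMap_\gamma(u,w)})\,\frac{1}{g(\abs{\gamma(u+w)-\gamma(u)}^2)}\dd u,
\qquad f(x)=\pars*{\tfrac{x}{2}}^q,\quad g(y)=y^{\frac{p-2q}{2}}.
\]
Both maps are strictly increasing on $\intervalco{0,\infty}$: for $f$ since $q>1$, and for $g$ since $p>2q$.

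The first step is to choose the exponent $\theta>1$ demanded by \aref{lem:MinI4}. We need $f^{1/\theta}(x)=2^{-q/\theta}x^{q/\theta}$ to be convex, i.e.\ $\theta\le q$. We also need $g^{1/(\theta-1)}(y)=y^{\frac{p-2q}{2(\theta-1)}}$ to be concave, i.e.\ $\frac{p-2q}{2(\theta-1)}\le 1$, i.e.\ $\theta\ge 1+\frac{p-2q}{2}$. Such a $\theta\in\intervaloc{1,q}$ exists precisely when $1+\frac{p-2q}{2}\le q$, which is $p\le 4q-2$. This is the source of the upper bound in the hypothesis.

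The cleanest choice is $\theta=q$. Then $f^{1/q}(x)=x/2$ is linear, so \aref{lem: convex f wu bound} reduces to the length bound \aref{lem:LengthOfGaussMapinU}. Meanwhile $g^{1/(q-1)}$ is a power $y^{\alpha}$ with $\alpha=\frac{p-2q}{2(q-1)}\in\intervaloc{0,1}$, which is concave and strictly increasing, so \aref{lem: circle max of Delta gamma |^r} applies on $[0,\tfrac12]$. (Note $\abs{\gamma(u+w)-\gamma(u)}^2\le \tfrac14$ for arc length parametrized curves of length $1$.) \aref{lem:MinI4} then yields
\[
G^{(p,q)}_w(\gamma)\ge f(2\pi)\,g\pars*{\pi^{-2}\sin(\pi w)^2}^{-1}=\pi^q\,\pi^{p-2q}\sin(\pi w)^{2q-p}=\pi^{p-q}\sin(\pi w)^{2q-p},
\]
with equality iff $\Image(\gamma)$ is a circle.

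The integrated statement follows by integrating in $w$, as in \aref{thm:MinI4}. The restriction $p<2q+1$ ensures $2q-p>-1$, so $\int_0^1\sin(\pi w)^{2q-p}\dd w<\infty$ and the bound is nontrivial.

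For the equality case of the integrated bound: if equality holds, then the nonnegative function $w\mapsto G^{(p,q)}_w(\gamma)-\pi^{p-q}\sin(\pi w)^{2q-p}$ vanishes for almost every $w$. Picking one such $w$ and using the pointwise rigidity from \aref{lem:MinI4}, we conclude that $\Image(\gamma)$ is a circle. Conversely, the circle attains equality for every $w$.

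The main obstacle is the rigidity step inside \aref{lem:MinI4}. Its equality case rests on the strict Jensen step in \aref{lem: circle max of Delta gamma |^r}, which needs strict concavity or at least strict monotonicity of $g^{1/(\theta-1)}$. At the endpoint $p=4q-2$ we get $\alpha=1$, so this map is linear but still strictly increasing, and the Wirtinger equality case from \aref{thm: Wirtinger type inequality} still forces a circle. I would check this endpoint explicitly. I would also check that the equality clause of \aref{lem:RevHoelder} causes no conflict, since for the circle both factors are constant in $u$.
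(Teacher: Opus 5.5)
Your proposal is correct and follows the paper's proof. The paper also applies \aref{lem:MinI4} with $f(x)=(x/2)^q$ and $g(y)=y^{(p-2q)/2}$, and although it writes $\theta=1/q$, its exponent is effectively $\theta=q$ in the lemma's convention, so that $f^{1/q}$ is linear and $g^{1/(q-1)}$ has exponent $\frac{p-2q}{2q-2}\in\intervaloc{0,1}$. Your remarks on why $p\le 4q-2$ is needed, on the endpoint $p=4q-2$ (strict monotonicity plus the Wirtinger equality case suffices), and on the almost-every-$w$ argument for equality in the integrated bound supply details the paper leaves implicit.
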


\begin{proof}
	We observe, that the second statement follows from the first one.
    Let $\gamma, q, p, w$ be as above.
    With the notation from \aref{lem:MinI4} set $f(x)=(\frac{1}{2}x)^q$, $g(x)=x^{(p-2q)/2}$ and $\theta = 1/q$. 
    Since $q>0$ and $p-2q>0$, $f$ and $g$ are strictly monotonically increasing. 
    Moreover, $f^\theta$ is convex and $g^{1/(\theta-1)}=(\cdot)^{(2q-p)/(2-2q)}$ is concave, as $\frac{p-2q}{2q-2} \in \intervaloc{0,1}$. 
    Now \aref{lem:MinI4} yields that
    \[
        G^{(p,q)}_w(\gamma) 
        \geq 
            f(2 \pi)   
            \frac{1}{g(\pi^{-2} 
            \sin (\pi w)^2)}
        = 
            \pi^q \, 
            \left( \frac{\sin(\pi w)}{\pi}\right)^{2q-p}
        ,
    \]
    with equality iff $\Image( \gamma )$ is a circle.
\end{proof}
Combining \aref{lem: TP_w min p >= 2q} with \aref{lem: G minorant for TP}, again by the standard argument \aref{eq: minorant argument} we conclude that circles minimize most $\TP^{(p,q)}$ in this regime.
We want to point out, that the Hilbert case ($q=2$, $p \in (4,5)$) is fully included in the parameter range of \aref{thm: circle min p geq 2q}.
\\
As mentioned before, we are not able to prove that all energies with $p\in \intervalco{2q, 2q+1}, q\geq 1$ are minimized by round circles.
Nonetheless, we are able to prove that, for the remaining parameters of this regime, the energies are minimized by circles among all convex curves.
In order to use similar techniques as before, we have to prove a somewhat ``degenerate'' case, where the energies are maximized by circles.

\begin{lemma}\label{lem:TPMaxByCircle}
    Let $\gamma \in W^{1,\infty}_{\mathrm{i,a}}$ be convex, $0\leq q\leq 1$ and 
    $w\in \intervaloo{0,1}$ be arbitrary. 
    Then $G^{(q,q)}_w(\gamma) \leq\sin(\pi w)^q$ with equality iff $\Image(\gamma)$ is a circle.
\end{lemma}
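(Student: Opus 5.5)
The plan is to reduce the estimate to a bound on the first moment of the tangent differences, using concavity of $x\mapsto x^q$ and the BV Wirtinger-type estimate of \aref{lem: max | Delta f| on S^1}. For $q=0$ both sides equal $1$, so I assume $q\in\intervaloc{0,1}$ from now on. By \aref{eq:DerivGaussMapU}, the integrand of $G^{(q,q)}_w$ is $h(u)^q$, where
\[
h(u)\ceq \frac12 \abs{\partial_u\GaussMap_\gamma(u,w)}\,\abs{\gamma(u+w)-\gamma(u)}
=\frac12\abs{P^\perp_{\GaussMap_\gamma(u,w)}\pars{\gamma'(u+w)-\gamma'(u)}}.
\]

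\textbf{Step 1 (Jensen).} Since $x\mapsto x^q$ is concave on $\intervalco{0,\infty}$ for $q\in\intervaloc{0,1}$ and $\Domain$ has measure one, Jensen gives
\[
G^{(q,q)}_w(\gamma)=\int_\Domain h^q\dd u\le\pars*{\int_\Domain h\dd u}^q .
\]
It therefore suffices to prove $\int_\Domain h\dd u\le \sin(\pi w)$.

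\textbf{Step 2 (drop the projection).} Since $\abs{P^\perp_v x}\le\abs{x}$, we have
\[
\int_\Domain h\dd u\le\frac12\int_\Domain\abs{t(u+w)-t(u)}\dd u ,
\]
where $t=\gamma'\colon\Domain\to\mathbb{S}^1$ (after placing $\gamma$ in $\R^2$, which is possible since convex curves are planar). For a convex curve parametrized by arc length, the unit tangent admits a monotonic angle function $\theta_t$ with total variation $2\pi$, by the supporting hyperplane characterization used in \aref{lem:RhoGreatCircleMonotonicAngleGammaConvex}. Then \aref{lem: max | Delta f| on S^1} yields $\int_\Domain\abs{t(u+w)-t(u)}\dd u\le 2\sin(\pi w)$, which closes the inequality.

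\textbf{Main obstacle.} The delicate point is the applicability of \aref{lem: max | Delta f| on S^1}. That lemma assumes $f$ injective, whereas the tangent of a convex curve is constant along straight segments and is only defined almost everywhere. I would first check that its proof uses only two facts: monotonicity of the angle function with range of length $2\pi$, which allows approximation by monotone $W^{1,1}$ angle functions, and \aref{lem:WirtingerMapsOnSphere}. Injectivity appears not to be used, so the lemma should apply verbatim to the monotone BV lift $\theta_t$, taken as a representative defined everywhere.

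\textbf{Equality.} If $\Image(\gamma)$ is a circle, then $t$ has constant speed $2\pi$ and $P^\perp_{\GaussMap_\gamma}(\gamma'(u+w)-\gamma'(u))=\gamma'(u+w)-\gamma'(u)$, since for a circle the difference of tangents is orthogonal to the chord. Thus all three inequalities become equalities. Conversely, equality in the chain forces equality in \aref{lem: max | Delta f| on S^1}. Hence $t$ is parametrized with constant angular speed $2\pi$, so the curvature of $\gamma$ is constant and $\Image(\gamma)$ is a circle.
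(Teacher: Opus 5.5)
Your inequality chain is the paper's own argument (the paper drops the projection first and then applies Jensen together with \aref{lem: max | Delta f| on S^1} to $t=\gamma'$; swapping the order is immaterial), and that part is fine. The gap is in the converse of the equality statement. You infer from equality in \aref{lem: max | Delta f| on S^1} that $t$ has constant speed $2\pi$, but that equality clause is false for rational $w$. Take a non-circular ellipse centred at $0$, parametrized by arc length with $\PathLength(\gamma)=1$. Central symmetry gives $\gamma(u+\frac12)=-\gamma(u)$, hence $t(u+\frac12)=-t(u)$ and $\int_\Domain\abs{t(u+\frac12)-t(u)}\dd u=2=2\sin(\frac{\pi}{2})$, although $\abs{t'}$ is not constant. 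Curves with $b$-fold rotational symmetry behave the same way at $w=a/b$. So ``hence $t$ has constant speed'' does not follow. Your converse also never uses the condition that actually carries the rigidity, namely equality in the projection step, which you only invoke in the easy direction. The paper's proof at least lists it: $\GaussMap_\gamma(u,w)\perp\gamma'(u+w)-\gamma'(u)$.

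Here is how to close the gap. Equality in your chain forces $h=\frac12\abs{t(u+w)-t(u)}$ a.e., i.e.\ $\inner{\gamma(u+w)-\gamma(u), t(u+w)-t(u)}=0$ a.e. Hence the Lipschitz function $u\mapsto\abs{\gamma(u+w)-\gamma(u)}^2$ is constant. For the step on $\mathbb{S}^1$, fix the orientation and write $t=(\cos\theta,\sin\theta)$ with $\theta$ nondecreasing and $\theta(u+1)=\theta(u)+2\pi$. Then $\Theta(u)\ceq\theta(u+w)-\theta(u)\in\intervalcc{0,2\pi}$, $\int_\Domain\Theta\dd u=2\pi w$ and $\abs{t(u+w)-t(u)}=2\sin(\Theta(u)/2)$. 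Strict concavity of $s\mapsto2\sin(s/2)$ on $\intervalcc{0,2\pi}$ therefore gives your bound directly, with equality iff $\Theta\equiv2\pi w$ a.e. No injectivity hypothesis is needed, which also settles your ``main obstacle''. In the equality case, $t(u+w)=R\,t(u)$ a.e.\ for the rotation $R$ by the angle $2\pi w$. Since $I-R$ is invertible, $\gamma(u+w)-x_0=R(\gamma(u)-x_0)$ for some $x_0$, and so $\abs{\gamma(u+w)-\gamma(u)}=2\sin(\pi w)\abs{\gamma(u)-x_0}$. Constancy of the left-hand side makes $\abs{\gamma-x_0}$ constant, so $\Image(\gamma)$ is a circle. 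Finally, your observation that both sides equal $1$ at $q=0$ means the ``iff circle'' clause actually fails there, since every convex curve gives equality. You should say so explicitly instead of setting $q=0$ aside.
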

\begin{proof}
    Let $q,w, \gamma$ be given.
    Combining \aref{lem: max | Delta f| on S^1} and Jensen's inequality for concave functions, we compute the following.
    \begin{align*}
   G^{(q,q)}_w(\gamma)
    &=
        \int_{\Domain}
            (
                \frac{1}{2}
                \abs{P^\perp_{\GaussMap_\gamma}(\gamma'(u+w)-\gamma'(u))}
            )^q
        \dd u
    \leq
    \frac{1}{2^q}\int_{\Domain}
        \abs{\gamma'(u+w)-\gamma'(u)}^q
    \dd u
    \\
    &\leq
        \sin(\pi w)^q
    .
    \end{align*}
    In the above, equality holds, iff $\GaussMap_\gamma(u,w) \perp \gamma'(u+w)-\gamma'(u)$ for all $u \in \Domain$ and $\abs{\gamma ''}=2\pi$.
    Hence, equality holds iff $\Image(\gamma)$ is a circle.
\end{proof}

\begin{lemma}\label{lem:G MinConvexCurvesP>2Q}
    Let $\gamma \in W^{1,\infty}_{\mathrm{i,a}}$ be convex, $q\geq 1, p>2q $ and $w \in (0,1)$ be arbitrary.
    Then 
    \[
        G^{(p,q)}_w(\gamma) 
        \geq 
            \pi^{p-q}\sin(\pi w)^{2q-p}
    \]
    and if $p<2q+1$, then
    \[
        G^{(p,q)}(\gamma)
        \geq 
        \pi^{p-q}
        \int_0^1 
            \sin(\pi w)^{2q-p} 
        \dd w
    ,
    \]
    with equality iff $\Image(\gamma)$ is a circle.
\end{lemma}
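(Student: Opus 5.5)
We follow the proof of \aref{lem: TP_w min p >= 2q}, with \aref{lem:TPMaxByCircle} replacing the Jensen-type lower bound on $\int_\Domain \abs{\partial_u\GaussMap_\gamma}^{q'}\dd u$. We factor the integrand of $G^{(p,q)}_w$ so that one factor is the integrand of $G^{(q',q')}_w$ for some $q'\le 1$, which is \emph{maximized} by the circle. The other factor is a negative power of $\abs{\Delta\gamma}$, controlled via \aref{lem: circle max of Delta gamma |^r}. The reverse Hölder inequality \aref{lem:RevHoelder} then allows an upper bound for one factor to produce a lower bound for $G^{(p,q)}_w$.

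\textbf{Step 1 (choice of exponents).} Write $a(u)=\tfrac12\abs{\partial_u\GaussMap_\gamma(u,w)}$ and $d(u)=\abs{\gamma(u+w)-\gamma(u)}$, so that $G^{(p,q)}_w=\int_\Domain a^q d^{2q-p}\dd u$. We apply \aref{lem:RevHoelder} with some $r>1$ in the form
\[
\int_\Domain a^q d^{2q-p}\dd u \;\ge\; \Big(\int_\Domain a^{q/r} d^{(2q-p)/r}\dd u\Big)^{r}\cdot(\ldots),
\]
or, more conveniently, with $f=a^q d^{2q-p}$ and $g=a^{-q}d^{p-2q}$ arranged so that the two resulting integrals are $\int a^{q'}\dd u$ and $\int d^{s}\dd u$. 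A natural choice is to let the $a$-factor carry a power $a^{q'}$ with $q'\le 1$ appearing with a \emph{negative} outer exponent, and the $d$-factor appear as $\int d^{s}\dd u$ with $0<s\le 2$, also with a negative outer exponent.

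Concretely, we write $1=a^{q\alpha}d^{\beta}\cdot a^{-q\alpha}d^{-\beta}$ and use Hölder on $\int 1\,\dd u=1$:
\[
1\le \Big(\int a^q d^{2q-p}\Big)^{1/t}\Big(\int a^{-q t'/t} d^{-(2q-p)t'/t}\Big)^{1/t'}.
\]
This requires negative powers of $a$, which is bad. Instead we use a three-factor Hölder inequality on
\[
\int a^{q'}\dd u=\int \big(a^q d^{2q-p}\big)^{q'/q}\, d^{(p-2q)q'/q}\dd u
\le \Big(\int a^qd^{2q-p}\Big)^{q'/q}\Big(\int d^{(p-2q)q'/(q-q')}\Big)^{(q-q')/q},
\]
valid for $q'<q$. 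This gives
\[
G_w\ge \Big(\int a^{q'}\Big)^{q/q'}\Big(\int d^{s}\Big)^{-(q-q')/q'},\qquad s=\frac{(p-2q)q'}{q-q'}.
\]
For this to be useful we would need a \emph{lower} bound on $\int a^{q'}$. For $q'\le1$, however, \aref{lem:TPMaxByCircle} only provides an upper bound. So we switch the roles instead.

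\textbf{Step 2 (the actual route).} We use $q'=1$, for which convexity gives an exact identity rather than an inequality. For convex $\gamma$, \aref{lem:RhoGreatCircleMonotonicAngleGammaConvex} shows that $\rho_{\gamma,w}$ runs monotonically once around a great circle, so $\int a\,\dd u=\pi$ exactly. If $q>1$, the display above with $q'=1$ gives
\[
G_w\ge \pi^q\Big(\int d^{s}\Big)^{-(q-1)},\qquad s=\frac{p-2q}{q-1}.
\]
This is acceptable only when $s\le2$, i.e.\ $p\le 4q-2$ (the already treated regime). To remove the restriction we use convexity again through $d$. For convex arc-length curves, $\int d\,\dd u$ is controlled as follows: by Cauchy's formula, $d(u)\le$ the length of the chord, and $\int d\,\dd u\le \tfrac1\pi\sin(\pi w)$ holds by \aref{lem: circle max of Delta gamma |^r} with $g=\sqrt{\cdot}$. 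The remaining case $s>2$, together with $q=1$, is handled by instead placing the power on $a$: take $q'\in(0,1]$ in the Hölder step above so that $s=1$, i.e.\ $q'=q/(p-2q+1)$, which lies in $(0,1]$ exactly when $p\ge 3q-1$. Note the Hölder step leaves $\int a^{q'}$ needing a lower bound. For convex curves, $\int a^{q'}\ge$ is obtained from Jensen only when $q'\ge1$, so the genuine tool becomes the combination: Jensen applied to $\int a^{q'}$ when $q'\ge1$, and for $q'<1$ a reverse Jensen combined with the exact identity $\int a=\pi$ together with the maximality in \aref{lem:TPMaxByCircle}.

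\textbf{Main obstacle.} The delicate point is matching exponents so that every auxiliary inequality points in the right direction for all $p\in(2q,\infty)$ and $q\ge1$. My first attempt would be $q'=1$ plus the exact identity $\int a=\pi$ from convexity, reducing everything to the maximization of $\int d^s$ by the circle for convex curves. For $s>2$, where \aref{lem: circle max of Delta gamma |^r} fails, I would attempt to prove that maximality directly for convex curves, via monotonicity of the secant angle (\aref{rmk:AngleEtaMonotonic}) and \aref{lem: max | Delta f| on S^1} applied to the tangent indicatrix, writing $d(u)=\abs{\int_u^{u+w}\gamma'}$. Equality then forces equality in the $d$-bound, hence a circle. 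Integrating in $w$ (which requires $p<2q+1$ for integrability of $\sin^{2q-p}$) gives the second claim.
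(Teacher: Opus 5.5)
There is a genuine gap. Your opening paragraph names the right idea, but the argument you actually carry out splits the integrand into separate powers of $a$ and $d$, and neither branch closes.

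In the branch $q'=1$ you need, whenever $p>4q-2$, that the circle maximizes $\int_\Domain d^s\dd u$ among convex arc-length curves for $s=\frac{p-2q}{q-1}>2$. This is false in general. By \cite{exner2007critical} (cf.\ \aref{rmk:LimitsOfOurMethods}), the circle is not even a local maximizer of $\int_\Domain\abs{\gamma(u+\frac12)-\gamma(u)}^s\dd u$ for $s>\frac52$. Small smooth perturbations of a circle are convex, so restricting to convex curves does not rescue the inequality. Since $s$ is unbounded as $q\searrow 1$ even when $p<2q+1$, this branch fails inside the range of the lemma. For large $p$ it fails for every $q$.

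For $q=1$, which is exactly the case used in \aref{thm:NonlocalWillmoreMinByCircle}, the branch $q'=1$ does not exist at all. Your fallback requires a \emph{lower} bound on $\int_\Domain a^{q'}\dd u$ with $q'<1$. Concave Jensen and \aref{lem:TPMaxByCircle} both give upper bounds, and no such lower bound holds. For convex curves collapsing onto a doubly covered segment, $a$ concentrates on a set of vanishing measure while $\int_\Domain a\dd u=\pi$, so $\int_\Domain a^{q'}\dd u\to 0$.

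The fix is the factorization you announced at the start. Trade the chord length for $ad=\frac12\abs{P^\perp_{\GaussMap_\gamma}(\gamma'(u+w)-\gamma'(u))}$, which is the integrand of $G^{(s,s)}_w$, by writing $a^qd^{2q-p}=a^{p-q}(ad)^{2q-p}$. Then \aref{lem:RevHoelder} with $r=p-q>1$ gives
\[
G^{(p,q)}_w(\gamma)\geq\Big(\int_\Domain a\dd u\Big)^{p-q}\Big(\int_\Domain (ad)^{s}\dd u\Big)^{-(p-q-1)},\qquad s=\frac{p-2q}{p-q-1}\in\intervaloc{0,1},
\]
for \emph{every} $p>2q$ and $q\geq 1$. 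Now $\int_\Domain a\dd u\geq\pi$ by \aref{lem:LengthOfGaussMapinU}. Also $\int_\Domain (ad)^s\dd u\leq\sin(\pi w)^s$ by \aref{lem:TPMaxByCircle}, which is where convexity enters, via the $BV$-Wirtinger bound for the tangent indicatrix. Together these give $\pi^{p-q}\sin(\pi w)^{2q-p}$, with equality only for circles.

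Because the exponent on $ad$ automatically lies in $\intervaloc{0,1}$, you never need a restriction like $p\leq 4q-2$ or a chord inequality with exponent above $2$. Since $2q-p<0$, you must still handle possible zeros of $ad$. The paper does this by truncating from below with $\max\{\cdot,1/n\}$ and passing to the limit with monotone convergence.
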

\begin{proof}
	 Let $p,q,w, \gamma$ be as above. 
	We observe, that the second statement follows from the first one.
	Recall that we have
	\[
		G^{(p,q)}_w(\gamma) 
		=
		\int_\Domain
		\pars*{
			\frac{
				\abs{P^\perp_{\GaussMap_\gamma}(\gamma'(u+w)-\gamma'(u))}
			}{2\abs{\gamma(u+w)-\gamma(u)}}
		}^q
		\abs{\gamma (u+w) - \gamma (u)}^{2q-p}
		\dd u
		=: \int_\Domain h(u) \dd u
		.
	\]
	From the given $w$, we deduce, that there is a $c>0$ such that $c\geq \abs{\gamma(u+w)-\gamma(u)}\geq c^{-1}$ for all $u$.
	Hence, there holds that $G_w^{(p,q)}(\gamma)< \infty$.
	\\
	We define
	\[
	f(u) \ceq \frac{\abs{P^\perp_{\GaussMap_\gamma}(\gamma'(u+w)-\gamma'(u))}}{2\abs{\gamma(u+w)-\gamma(u)}} \quad \text{and} \quad 
	g(u) \ceq \abs{P^\perp_{\GaussMap_\gamma}(\gamma'(u+w)-\gamma'(u))} 
	\]
	Let further $\alpha=p-q>0$ and $\beta=2q-p<0$. Notice that if $g \neq0$, then $h$ is equal to $f^\alpha g^\beta$. 
    Thus, if we define for $n \in \N$ the function $g_n \ceq \max \{g, 1/n\} \leq 2$, then the sequence $(f^\alpha g_n^\beta)_{n \in \N}$ is monotonically increasing and converges pointwise to $h$. 
    Therefore, by monotone convergence
	\[
	G_w^{(p,q)}(\gamma)= \lim_{n \to \infty} 
	\int_\Domain f^\alpha g_n^\beta \dd u .
	\]
	Now, we may apply \aref{lem:RevHoelder} to $\int f^\alpha g_n^\beta \dd u$, in order to bound the right hand side from below, yielding  
	\begin{align*}
	G_w^{(p,q)}(\gamma) \geq&
	\lim_{n \to \infty}  \left( \left(\int_\Domain f \dd u \right)^{\alpha} \cdot \left( \int_\Domain g_n^{\frac{\beta}{1-\alpha}} \dd u \right)^{1- \alpha} \right)
	\end{align*}
	Notice, that $\int_\Domain g^{\frac{\beta}{1-\alpha}} \dd u \neq0$, because otherwise $g \equiv 0$ and thus also $\abs{\partial _u \GaussMap _\gamma }=  g / \abs{\gamma (\cdot +w) - \gamma (\cdot)}\equiv 0$, which can not be true for an embedded closed curve. 
    Therefore, by continuity of $x \mapsto x^{1-\alpha}$ on $\intervaloo{0, \infty}$ and dominated convergence we get
    \begin{align*}
		G_w^{(p,q)}(\gamma) \geq&
	\left(\int_\Domain f \dd u \right)^{\alpha} \cdot \left( \lim_{n \to \infty} \int_\Domain g_n^{\frac{\beta}{1-\alpha}} \dd u \right)^{1- \alpha} 
	\\
	=&
	\left(\int_\Domain f \dd u \right)^{\alpha} \cdot \left( \int_\Domain g^{\frac{\beta}{1-\alpha}} \dd u \right)^{1- \alpha} 
	\\
	=&
	\left(\int_\Domain \frac{1}{2} \abs{\partial _u \GaussMap _\gamma } \dd u \right)^{\alpha} \cdot \left( \int_\Domain \abs{P^\perp_{\GaussMap_\gamma}(\gamma'(u+w)-\gamma'(u))} ^{\frac{\beta}{1-\alpha}} \dd u \right)^{1- \alpha} .
	\end{align*}
	Since $\frac{\beta}{1 - \alpha} \in \intervaloc{0,1}$ we conclude by \aref{lem:TPMaxByCircle} and \aref{lem:LengthOfGaussMapinU} that
	\(
	G^{(p,q)}_w(\gamma) 
	\geq
	\pi^{p-q}
	\sin(\pi w)^{2q-p},
	\)
	with equality iff $\Image(\gamma)$ is a circle.
\end{proof}
Using \aref{lem: TP_w min p >= 2q} with \aref{lem: G minorant for TP} and \aref{lem:G MinConvexCurvesP>2Q}, we can finally state the main result of this section.
\begin{theorem}
\label{thm: circle min p geq 2q}
    Let $\gamma \in W^{1,\infty}_{\mathrm{i,a}}$ and $q>1, p \in \intervaloc{2q, 4q-2} \cap (2q,2q+1)$. Then 
    \[ 
    \TP^{(p,q)}(\gamma ) \geq \pi^{p-q} \int_{0}^{1} \sin(\pi w)^{2q-p} \dd w \, ,
    \]
    with equality iff $\Image( \gamma )$ is a circle.
    Furthermore, the inequality holds true for $q\geq1, p\in \intervaloo{2q,2q+1}$ among all convex curves out of $W^{1,\infty}_{\mathrm{i,a}}$ with equality iff $\Image(\gamma)$ is a circle.
\end{theorem}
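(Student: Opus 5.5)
The plan is to combine the minorant $G^{(p,q)}$ from \aref{lem: G minorant for TP} with the lower bounds for $G^{(p,q)}$ established in \aref{lem: TP_w min p >= 2q} and \aref{lem:G MinConvexCurvesP>2Q}. The combination goes through the standard chain \aref{eq: minorant argument}.

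First, let $\gamma\in W^{1,\infty}_{\mathrm{i,a}}$. If $\TP^{(p,q)}(\gamma)=\infty$ there is nothing to prove. Otherwise \aref{lem: G minorant for TP} gives $G^{(p,q)}(\gamma)\le \TP^{(p,q)}(\gamma)$, and this is an equality when $\Image(\gamma)$ is a circle.

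In the first case, $q>1$ and $p\in(2q,\min\{4q-2,2q+1\})$. Here $p<2q+1$, so the integrated form of \aref{lem: TP_w min p >= 2q} applies and gives
\[
G^{(p,q)}(\gamma)\ge \pi^{p-q}\int_0^1\sin(\pi w)^{2q-p}\dd w .
\]
Equality holds iff $\Image(\gamma)$ is a circle. Since $2q-p>-1$, the right-hand side is finite.

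In the second case, $q\ge1$, $p\in(2q,2q+1)$ and $\gamma$ is convex. Here I would use \aref{lem:G MinConvexCurvesP>2Q} in place of the previous lemma and obtain the same bound, again with equality iff $\Image(\gamma)$ is a circle.

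In both cases chaining the two inequalities gives $\TP^{(p,q)}(\gamma)\ge\pi^{p-q}\int_0^1\sin(\pi w)^{2q-p}\dd w$. For a circle $\gamma_C$ of length $1$ parametrized by arc length, both lemmas are equalities, so $\TP^{(p,q)}(\gamma_C)$ equals the bound.

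The main point needing care is the converse of the equality statement. Suppose $\TP^{(p,q)}(\gamma)$ equals the bound. Then $\pi^{p-q}\int_0^1\sin(\pi w)^{2q-p}\dd w\le G^{(p,q)}(\gamma)\le \TP^{(p,q)}(\gamma)$, and the outer terms coincide, so $G^{(p,q)}(\gamma)$ also attains the bound.

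Next I must pass from the integrated equality to a pointwise statement in $w$. The integrand bound $G^{(p,q)}_w(\gamma)\ge \pi^{p-q}\sin(\pi w)^{2q-p}$ holds for every $w$. Equality of the integrals therefore forces $G^{(p,q)}_w(\gamma)=\pi^{p-q}\sin(\pi w)^{2q-p}$ for almost every $w$, since both sides are integrable. Fixing any single such $w$, the equality clause of the fixed-$w$ lemma (\aref{lem: TP_w min p >= 2q}, resp.\ \aref{lem:G MinConvexCurvesP>2Q}) yields that $\Image(\gamma)$ is a circle. Here I rely on those lemmas characterizing equality for each fixed $w$, which is how they are stated; in \aref{lem: TP_w min p >= 2q} this characterization traces back to \aref{lem:MinI4}.
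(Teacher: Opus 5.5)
Your proposal is correct and follows the paper's argument. The paper chains the minorant $G^{(p,q)}\le \TP^{(p,q)}$ with the lower bounds for $G^{(p,q)}$: for all arc-length curves when $p\le 4q-2$, and for convex curves on the whole range $p\in(2q,2q+1)$. Your passage from equality of the $w$-integrals to equality for almost every $w$ spells out a step the paper leaves implicit.

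One small slip: for $1<q<3/2$ the first range is $(2q,4q-2]$, which is closed at $4q-2$, so it is not the open interval ending at $\min\{4q-2,2q+1\}$. The same argument covers $p=4q-2$ verbatim.
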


%% file: LowerLimit.tex
\subsection{The lower limit case $p=q+1$}\label{section: The lower limit case p=q+1}
\noindent
Let's consider a rather unexpected parameter combination, namely $q \geq 1, p=q+1$. 
We refer to it as the \textit{lower limit case}, because for $p<q+1$ there is no minimizer among convex, embedded curves.
This can be seen, when a family of convex curves converges to a double transversed line segment, as its energy tends to zero.
We show that $\TPE^{(q+1,q)}$ is uniquely minimized by the circle for $q>1$.
In the last section, we will use this result to minimize the energy for $p \geq \max \{2q-2,q+1\}$.
\\
The lower limit case may not be particularly interesting from the viewpoint of geometric knot theory, because these energies are not self repulsive in the ordinary sense. 
However, the tangent-point energies still seem to exhibit some kind of repulsion in a weak sense for $p\in\intervaloo{q+1,q+2}, q\geq 1$. 
Even though the energy does not diverge for orthogonal intersections, as one can compute for a cross-section, the energy still penalizes tangential intersections of sufficiently high order.
We refer to this behavior as \textit{mild repulsion}. 
Unfortunately, a detailed investigation of these mildly repulsive energies would exceed the scope of this work, but we give a definition for its characterizing properties.

\begin{definition}
	Let $\gamma:\Domain\rightarrow \R^n$ be a continuous curve and $s >1$.
	We say, that $\gamma$ has a \textit{touching point of order s} at $u\in \Domain$, if there exists $v \neq u \in \Domain$, such that 
	\begin{itemize}
		\item $\gamma (u)=\gamma(v)$ 
		\item $\gamma '(u), \gamma '(v)$ exist and $\gamma '(u)=\gamma '(v) \neq 0$
		\item $\limsup_{x\rightarrow u}\frac{\abs{P^\top_{\gamma'(u)}(\gamma (x)- \gamma(u))}}{\abs{x-u}^s} <\infty \quad \text{ and } \quad  \limsup_{x\rightarrow v}\frac{\abs{P^\top_{\gamma'(u)}(\gamma(x)- \gamma(v))}}{\abs{x-v}^s} <\infty$
	\end{itemize}
    Let $X$ be a subspace of the space of continuous functions, equipped with a topology $\tau$ and $\mathcal{K}_X\ceq X \cap \{ \gamma \text{ an embedding }\}$.
	We say that a functional $F:\mathcal{K}_X\rightarrow \R$ is \textit{mildly s-repuslive} with respect to $\tau$, if $F(\gamma_n)\nearrow \infty$ for all sequences $(\gamma_n)\subset \mathcal{K}_X$, converging to an immersion with a touching point of order s with respect to $\tau$.	
\end{definition}
\begin{remark}
	If $\gamma \in C^{1, \alpha}$ for $ \alpha \in \intervaloc{0,1}$, then every tangential intersection point, i.e. $u,v$ with $\gamma (u)=\gamma (v)$ and $\gamma '(u)=\gamma '(v)$, is at least of order $1+ \alpha$.
\end{remark}
Furthermore, for planar curves, they are closely related to other non-local, geometric functionals arising from the study of the perimeter or the so-called \textit{fractional perimeter} of planar domains. For example, if $\gamma$ parametrizes the boundary of a set $\Omega \subset \mathbb{R}^n$, $n=2$, then the energy $F^{(p,q)}(\gamma)$, introduced below, for $p=3$, $q=2$ coincides with the functional 
	\[
	\int_{\partial \Omega \times \partial \Omega} 
	    \frac{\abs{\inner{ n(x),y-x\rangle \langle x-y, n(y) } }}{\norm{x-y}^{n+1}} 
    \dd \sigma (x) \dd \sigma (y) 
	\]
considered in~\cite{steinerberger2022inequality} and later in~\cite{bushling2024singular}.
Similar terms occurred in~\cite{o2015mobius} and~\cite{o2018regularized}.
Our methods provide an alternative proof for~\cite{steinerberger2022inequality} in the case $n=2$, slightly improving the initially required regularity.
On the other hand, the techniques used in~\cite{bushling2024singular} are more general than ours.
Nonetheless, our techniques can be used as geometric intuition.
\\
The proof that circles minimize the energy in the lower limit case is broadly similar to the one from the previous section. 
Again we present a suitable minorant, which this time is related to the functional $\Funct _3$ from \aref{thm:MinI5}. 
To this end, let $\gamma \in W^{1, \infty}_{\mathrm{i,a}}(\Domain, \mathbb{R}^n)$ and $u \in \Domain$. 
We define $\alpha=p-q, \beta=2q-p$ and
\begin{align*}
        F_u^{(p,q)} (\gamma )
        \ceq&
            \int_{0}^{1}
                    \left( 
                        \frac{
                            \abs{
                                P^\perp_{\gamma ^\prime (u)} (\gamma(u+w)- \gamma (u))
                            }}
                        {\abs{ \gamma(u+w)-\gamma(u)}} \right)^\beta  
                        \left( 
                        \frac{
                        	\abs{
                        		P^\perp_{\gamma ^\prime (u+w)} (\gamma(u+w)-\gamma(u))
                        	}
                        }
                        {\abs{ \gamma(u+w)-\gamma(u)}^2} 
                        \right)^\alpha
                        \dd w 
        \\
        =&
           \int_{0}^{1}
           \abs{P^\perp _{\gamma ^\prime (u)} \GaussMap_\gamma (u,w)}^\beta
           \abs{\frac{\partial}{\partial w} \GaussMap_\gamma (u,w)}^\alpha
           \dd w
        ,
        \\
    F^{(p,q)}(\gamma)
        \ceq&\int_{\Domain} F_u^{(p,q)} (\gamma) \dd u
    ,
    \end{align*}
where we set $F_u^{(p,q)}(\gamma)=\infty$, if $\gamma ^\prime (u)$ does not exist.
\begin{lemma}\label{lem: F^pq TP^pq relation}
    Let $q>0, p\geq0$ and $\gamma \in W^{1,\infty}_{\mathrm{i,a}}$.
    If $p=q$ or $p=2q$, then $\TP^{(p,q)} \equiv F^{(p,q)}$. 
    If $p \in \intervaloo{q,2q}$, then $F^{(p,q)}(\gamma) \leq \TP^{(p,q)}(\gamma)$.
    Furthermore, among convex curves, equality holds if and only if $\Image(\gamma)$ is a circle.
\end{lemma}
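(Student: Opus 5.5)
The plan is to compare the integrands of $\TP^{(p,q)}$ and $F^{(p,q)}$ pointwise and use a weighted AM--GM inequality followed by a symmetrization in $(u,w)$. Fix $\gamma\in W^{1,\infty}_{\mathrm{i,a}}$ and abbreviate, for $u,w$ where $\gamma'(u),\gamma'(u+w)$ exist,
\[
d\ceq\abs{\gamma(u+w)-\gamma(u)},\qquad a\ceq \frac{\abs{P^\perp_{\gamma'(u)}(\gamma(u+w)-\gamma(u))}}{d},\qquad b\ceq \frac{\abs{P^\perp_{\gamma'(u+w)}(\gamma(u+w)-\gamma(u))}}{d}.
\]
Since $\abs{\gamma'}=1$, the integrand of $\TP^{(p,q)}$ is $a^q d^{q-p}=a^q d^{-\alpha}$. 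The integrand of $F^{(p,q)}$ is $a^\beta (b/d)^\alpha=a^\beta b^\alpha d^{-\alpha}$, where $\alpha=p-q$, $\beta=2q-p$ and $\alpha+\beta=q$.

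The key symmetry comes from the substitution $(u,w)\mapsto(u+w,1-w)$. This is measure preserving on $\Domain\times\Domain$, leaves $d$ invariant and swaps $a$ and $b$. Hence
\[
\iint b^q d^{-\alpha}\dd w\dd u=\iint a^q d^{-\alpha}\dd w\dd u=\TP^{(p,q)}(\gamma).
\]
For $p=q$ we have $\alpha=0$, so the two integrands coincide. For $p=2q$ we have $\beta=0$, and $F^{(p,q)}=\iint b^q d^{-q}$, which equals $\TP^{(p,q)}$ by the symmetry.

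For $p\in\intervaloo{q,2q}$ both $\alpha,\beta>0$, and the weighted Young/AM--GM inequality gives
\[
a^\beta b^\alpha\le \tfrac{\beta}{q}a^q+\tfrac{\alpha}{q}b^q,
\]
with equality iff $a=b$. Multiplying by $d^{-\alpha}$, integrating and using the symmetry yields
\[
F^{(p,q)}(\gamma)\le \tfrac{\beta}{q}\TP^{(p,q)}(\gamma)+\tfrac{\alpha}{q}\TP^{(p,q)}(\gamma)=\TP^{(p,q)}(\gamma).
\]
If $\TP^{(p,q)}(\gamma)=\infty$ there is nothing to show. For a circle the chord makes equal angles with both tangents, so $a=b$ everywhere and equality holds.

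The main obstacle is the converse among convex curves. Equality with finite energy forces $a=b$ for a.e.\ $(u,w)$. Here $a$ and $b$ are the sines of the angles between the chord and the two tangents. For a convex planar curve these angles lie in $[0,\pi]$, so $a=b$ means the angles are equal or supplementary. I would first rule out the supplementary alternative using convexity: it would force the chord to be parallel to a tangent, contradicting strict positivity of the angles except in degenerate segment situations, which would be handled separately. This yields the geometric statement that the circle through $\gamma(u)$ tangent to $\gamma'(u)$ and passing through $\gamma(u+w)$ is also tangent to $\gamma'(u+w)$.

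Then I fix a good $u$ (where $\gamma'(u)$ exists and the equality holds for a.e.\ $w$) and consider the pencil of circles through $\gamma(u)$ tangent to $\gamma'(u)$. Away from $\gamma(u)$ this pencil defines a smooth, locally Lipschitz unit direction field $V$, namely the tangent of the pencil circle through the given point. The curve $w\mapsto\gamma(u+w)$ is Lipschitz with $\gamma'(u+w)=\pm V(\gamma(u+w))$ for a.e.\ $w$. The sign is fixed by orientation and continuity. Uniqueness for this ODE, applied to absolutely continuous solutions on compact subintervals of $(0,1)$ where the curve stays away from $\gamma(u)$ by injectivity, shows that $\gamma(u+\cdot)$ stays on a single pencil circle. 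Letting the subintervals exhaust $(0,1)$ gives that $\Image(\gamma)$ is a circle.
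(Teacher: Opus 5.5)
Your inequality argument is correct and matches the paper: the weighted Young inequality $a^\beta b^\alpha\le\frac{\beta}{q}a^q+\frac{\alpha}{q}b^q$, combined with the measure-preserving swap $(u,w)\mapsto(u+w,1-w)$, which exchanges $a$ and $b$ and fixes $d$. The paper uses this swap only tacitly, also for $p=2q$. You are right to restrict the equality discussion to finite energy, and this restriction is actually needed: for $q>2$ and $p\in\intervalco{q+2,2q}$ a square has $F^{(p,q)}=\TP^{(p,q)}=\infty$.

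For the rigidity step you take a different route from the paper. The paper turns \aref{eq: Circle property} into the inscribed-angle property (P) for almost all pairs and then concludes with \aref{thm:UmlaufwinkelsatzProjektorEig} and \aref{lem:UmlaufWinkelSatzScharfBeiKreis}. Your pencil-of-circles argument from a single base point is more self-contained, but as written it has a gap.

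\textbf{The gap: excluding the supplementary alternative.} The reason you give is false. Orient the curve positively and let $\phi_1,\phi_2\in[0,\pi]$ be the angle from $\gamma'(u)$ to the chord and the angle from the chord to $\gamma'(u+w)$. Then $a=\sin\phi_1$, $b=\sin\phi_2$, and $\phi_1+\phi_2$ is the turning of the tangent between $u$ and $u+w$. So $\phi_1+\phi_2=\pi$ means $\gamma'(u+w)=-\gamma'(u)$ with an arbitrary chord direction; it does not mean the chord is parallel to a tangent.
\begin{itemize}
\item This case is compatible with convexity and can fill a set of positive measure. On a stadium, take $u$ on one flat side and $u+w$ on the other; then $a=b=h/d$, with $h$ the distance between the sides.
\item Consequently, one good $u$ genuinely does not suffice. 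Normalize $\gamma(u)=0$, $\gamma'(u)=e_1$. Follow the right half of the circle of radius $r_0$ centred at $r_0e_2$ up to $A=2r_0e_2$. Then follow the segment from $A$ to $B=(-s,2r_0)$, with $0<s<2r_0$. Finally return to $0$ along the circle through $0$ that is tangent to $e_1$ and passes through $B$.
\item This curve is convex, and for this $u$ it satisfies $a(u,w)=b(u,w)$ for a.e.\ $w$: the pencil type holds on both arcs and the antiparallel type on the segment. Yet it is not a circle.
\end{itemize}
The paper's appendix passes from equal projections to equal angles without comment, so it does not settle this point either.

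\textbf{A repair.} You must use that the hypothesis holds for a.e.\ $u$, by first excluding flat pieces.
\begin{itemize}
\item Suppose a good $u$ lies inside a maximal segment $\gamma([s_0,s_1])$. Let $w_0$ be the first $w$ at which the tangent has turned by $\pi$.
\item For $w<w_0$ only $\phi_1=\phi_2$ is possible. Your argument therefore keeps $\gamma(u+w)$ on the tangent line at $\gamma(u)$ for all $w<w_0$.
\item A corner of a convex Jordan curve turns by less than $\pi$, so $u+w_0>s_1$. On $(s_1,u+w_0)$ the tangent is then not $\pm e_1$. This contradicts $\gamma'\parallel e_1$ a.e.\ along a line.
\item Without flat pieces, the set $\{w:\gamma'(u+w)=-\gamma'(u)\}$ has at most one point for every $u$, and your argument then runs on all of $(0,1)$.
\end{itemize}

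\textbf{A smaller point on the sign.} The sign in $\gamma'=\pm V$ cannot be fixed ``by continuity'', since $\gamma'$ is only in $L^\infty$. It also does not need to be fixed. With $\gamma(u)=0$ and $\gamma'(u)=e_1$, the pencil circles are the level sets of $\Phi(x,y)=y/(x^2+y^2)$, and $V\perp\nabla\Phi$. Hence $\frac{\dd}{\dd w}\Phi(\gamma(u+w))=0$ a.e.\ for either sign. Since $\Phi\circ\gamma(u+\cdot)$ is locally Lipschitz on $(0,1)$, it is constant.
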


\begin{proof} 
If $p=q$ then $\alpha =0$ and if $p=2q$ then $\beta =0$, so the claim follows immediately.
Hence, let $q>0, p \in \intervaloo{q,2q}$ and $\gamma\in W^{1,\infty}_{\mathrm{i,a}}$ be arbitrary.
Since $\alpha=p-q, \beta=2q-p$, we conclude that $\alpha , \beta >0$. 
Applying Young's inequality, with $q_1 = \frac{\alpha + \beta}{\alpha}$ and $p_1=q_1^\prime$ its Hölder conjugate, yields
\begin{equation*}
    a^\alpha \, b ^\beta \leq \frac{1}{q_1} a^{\alpha + \beta} + \frac{1}{p_1} b^{\alpha + \beta} \,
    \text{for all }
    a,b\geq 0 .
\end{equation*}
Let $\gamma \in W^{1,1}_{\mathrm{i,r}}$ be arbitrary.
Applying the above inequality to 
$a=\abs{P^\perp_{\gamma ^\prime (u+w)}(\gamma(u+w)-\gamma(u))}$ and 
$b=\abs{P^\perp_{\gamma ^\prime (u)} (\gamma(u+w)-\gamma(u))}$, yields
\begin{align*}
&F^{(p,q)}(\gamma )
=
    \int_\Domain
        \int_{0}^{1}
           \abs{P^\perp _{\gamma ^\prime (u)} \GaussMap_\gamma (u,w)}^\beta
           \abs{\frac{\partial}{\partial w} \GaussMap_\gamma (u,w)}^\alpha
        \dd w
    \dd u
\\
    &\leq
        \int_{\Domain} 
            \int_{0}^{1} 
                \frac{1}{q_1}
                    \left( 
                        \fdfrac{
                            \abs{
                                P^\perp_{\gamma ^\prime (u+w)} (\gamma(u+w)-\gamma(u))
                            }^{\alpha+ \beta}
                        }
                        {
                            \abs{ 
                                \gamma(u+w)-\gamma(u)
                            }^{2 \alpha + \beta}
                        } \right) 
                + 
                 \fdfrac{1}{p_1}
                    \left( 
                        \frac{
                            \abs{
                                P^\perp_{\gamma ^\prime (u)} (\gamma(u+w)-\gamma(u)) 
                            }^{\alpha+ \beta}}
                        {\abs{ \gamma(u+w)-\gamma(u)}^{2 \alpha + \beta}} 
                    \right) 
        \dd w \dd u
    \\
    &=
        \TPE^{(2\alpha+\beta, \alpha+\beta)}(\gamma)
    =
        \TPE^{(p,q)}(\gamma)
\end{align*}
In the above, equality holds, if and only if 
\begin{equation}
    \label{eq: Circle property}
        \abs{
            P^\perp_{\gamma ^\prime (u+w)} (\gamma(u+w)-\gamma(u))
        }
        =
        \abs{
            P^\perp_{\gamma ^\prime (u)} (\gamma(u+w)-\gamma(u))
         }
        \text{ for a.e. } u \in \Domain, \; w \in (0,1)
    .
\end{equation}
Using \aref{thm:UmlaufwinkelsatzProjektorEig}, we conclude that, among all convex curves, \aref{eq: Circle property} is true iff $\Image(\gamma)$ is a circle.
\end{proof}

\begin{remark}
    Since for $p\in \intervalco{0,2q+1}\setminus [q,2q]$, the reverse inequality of \aref{lem: F^pq TP^pq relation} holds true,
    the family $F^{(p,q)}$ is not suited to determine whether these tangent-point energies are minimized by the circle.
    Hence, if the circle maximizes $F^{(p,q)}$ in this regime of parameters, then it is the unique global maximizer of $\TPE^{(p,q)}$.
\end{remark}
We now show similar bounds for $F_u^{(q+1,q)}$, as we did for $G_w^{(p,q)}$ in \aref{lem: TP_w min p >= 2q}. 
Here, the unique property of the lower limit case is that $\alpha=1$.
This enables us to view $F^{(q+1,q)}$ as a special case of the functional $\Funct_3$ from \aref{thm:MinI5}.

\begin{lemma}\label{lem: Minimizer of S p=q+1}
    Let $\gamma \in W^{1,\infty}_{\mathrm{i,a}}$ and $q >0$. 
    Then 
    \[
        F_u^{(q+1,q)} (\gamma) \geq \pi \int_0^1 \sin (\pi w)^{q-1} \dd w
        \text{ for almost every } u \in \Domain,
    \]
    with equality for almost every $u$ iff $\gamma$ is convex.
    Furthermore,
    $F^{(q+1,q)}(\gamma)\geq \pi \int_0^1 \sin (\pi w)^{q-1} \dd w$, where equality holds iff $\gamma$ is convex.
\end{lemma}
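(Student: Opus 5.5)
The plan is to recognize $F_u^{(q+1,q)}$ as the integrand of $\Funct_3$ and then invoke \aref{lem:MinI5} and \aref{thm:MinI5}. For $p=q+1$ we have $\alpha=1$ and $\beta=q-1$, so
\[
F_u^{(q+1,q)}(\gamma)=\int_0^1 \abs{P^\perp_{\gamma'(u)}\GaussMap_\gamma(u,w)}^{q-1}\abs{\partial_w \GaussMap_\gamma(u,w)}\dd w .
\]
Since $\gamma$ is parametrized by arc length, $\gamma'(u)=t_\gamma(u)$ wherever it exists, which is almost everywhere. Because $\GaussMap_\gamma$ is a unit vector, $\abs{P^\perp_{t_\gamma(u)}\GaussMap_\gamma}^2=1-\inner{t_\gamma(u),\GaussMap_\gamma}^2$. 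Hence the integrand equals $g(\inner{t_\gamma(u),\GaussMap_\gamma})\abs{\partial_w\GaussMap_\gamma}$ with $g(x)\ceq(1-x^2)^{(q-1)/2}$.

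Next we check the hypotheses of \aref{lem:MinI5}. Since $q>0$, the exponent $(q-1)/2$ exceeds $-1/2$. For $q\geq 1$, $g$ is bounded and nonnegative, so $g\in L^{2+\varepsilon}((-1,1))$ trivially. For $q\in(0,1)$, $g$ blows up at $\pm1$ like $(1-\abs{x})^{(q-1)/2}$, so $g\in L^{r}$ only for $r<2/(1-q)$. This may be smaller than $2+\varepsilon$ when $q$ is small, and that is the main obstacle.

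To get around it, we note that in the proof of \aref{lem:MinI5} the integrability hypothesis is used only to make the primitive $K(x)=\int_x^1 g(y)(1-y^2)^{-1/2}\dd y$ finite. Here this is explicit: $g(y)(1-y^2)^{-1/2}=(1-y^2)^{q/2-1}$, which is integrable on $(-1,1)$ exactly when $q>0$. The same argument as in \aref{lem:MinI5} therefore goes through verbatim. We compute $h(-t_\gamma(u))-h(t_\gamma(u))$ via the chain rule along $\eta_{\gamma,u}$, which is legitimate for a.e. $u$ since $\eta_{\gamma,u}$ is absolutely continuous and connects $t_\gamma(u)$ to $-t_\gamma(u)$ by \eqref{eq:ComputationLimitEndpointEta}. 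We then apply Cauchy–Schwarz to $\inner{-P^\perp_{\eta}t_\gamma(u),\eta'}$. Near the endpoints, where $K$ may have an infinite derivative, we integrate over $[\delta,1-\delta]$ and pass to the limit using continuity of $K$.

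The value of the bound comes from substituting $y=\cos(\pi w)$:
\[
\pi\int_0^1 g(\cos(\pi w))\dd w=\pi\int_0^1\sin(\pi w)^{q-1}\dd w ,
\]
which is the claimed constant. The equality characterization is inherited directly. Equality for a.e. $u$ holds iff $\eta_{\gamma,u}$ is a monotonically traversed great half circle for a.e. $u$, which by \aref{lem:Eta convex to great half circle} and \aref{rmk:AngleEtaMonotonic} is equivalent to convexity of $\gamma$. We should also check that the Cauchy–Schwarz step is tight only in this case, also when $g$ vanishes at interior points; since $g>0$ on $(-1,1)$, no degeneracy arises.

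Finally, integrating the pointwise bound over $u\in\Domain$ gives $F^{(q+1,q)}(\gamma)\geq\pi\int_0^1\sin(\pi w)^{q-1}\dd w$, exactly as in \aref{thm:MinI5}. Equality in the integrated inequality forces equality for a.e. $u$, because the pointwise inequality holds a.e., and hence forces convexity. Conversely, a convex curve attains equality for a.e. $u$, and so it attains equality in the integral.
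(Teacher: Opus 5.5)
Your proof is correct and is essentially the paper's: it rewrites $F_u^{(q+1,q)}$ as the integrand of $\Funct_3$ with $g(x)=(1-x^2)^{(q-1)/2}$, runs the argument of \aref{lem:MinI5} for a.e.\ $u$, and integrates in $u$. The one place you diverge is unnecessary: the $L^{2+\varepsilon}$ hypothesis is no obstacle for small $q$, because $\varepsilon>0$ is yours to choose and $2/(1-q)>2$, so e.g.\ $\varepsilon=q/2$ works since $(2+\tfrac{q}{2})(1-q)<2$ (this is exactly the paper's choice $\varepsilon=(\beta+1)/2$), although your observation that only integrability of $(1-y^2)^{q/2-1}$ is used, together with the truncation to $[\delta,1-\delta]$, is a valid and slightly more careful way to run the same argument.
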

\begin{proof}
	We observe, that the second statement follows from the first one.
    Let $\gamma \in W^{1,\infty}_{\mathrm{i,a}}$. 
    Recall that
    \[
        F_u^{(q+1,q)}(\gamma)
        =
        \int_{0}^{1}
            \abs{P^\perp _{\gamma ^\prime (u)} \GaussMap_\gamma (u,w)}^\beta
            \abs{\frac{\partial}{\partial w} \GaussMap_\gamma (u,w)}
        \dd w \, ,
    \]
    where $q>0$ implies that $\beta >-1$.
    In order to apply \aref{lem:MinI5}, we set $g(x)=(1-x^2)^{\beta/2}$ and $\varepsilon= \frac{\beta +1}{2}$. 
    Since $\frac{\beta}{2} \cdot (2+\varepsilon)>\beta - \frac{\varepsilon}{2}=\frac{3}{4}\beta -\frac{1}{4}>-1$, we have that $g \in L^{2+ \varepsilon}((-1,1),\intervalco{0, \infty})$. 
    Moreover,
    \[
        g(\inner{ t_\gamma (u), \GaussMap _\gamma (u,w)} )
    =
        (
            1-\inner{ \gamma ^\prime (u), \GaussMap _\gamma (u,w)}^2
        )^{\frac{\beta}{2}}
    =
    \abs{P^\perp _{\gamma ^\prime (u)} \GaussMap _\gamma (u,w)}^\beta
    .
    \] 
    Therefore, \aref{lem:MinI5} implies that
    \[
        F_u^{(q+1,q)}(\gamma) 
    \geq 
        \pi \int_0^1 g(\cos (\pi w)) \dd w
    =
        \pi \int_0^1 (1-\cos (\pi w)^2)^{\frac{\beta}{2}} \dd w
    =
        \pi \int_0^1 \sin (\pi w)^{q-1} \dd w
    ,
    \]
    with equality for almost every $u \in \Domain$ iff $\gamma$ is convex.
\end{proof}
Similarly as before, we combine \aref{lem: F^pq TP^pq relation} with \aref{lem: Minimizer of S p=q+1}, in order to conclude the following.

\begin{theorem}\label{thm: circle min p=q+1}
    Let $\gamma \in W^{1,\infty}_{\mathrm{i,a}}$ and $q >1$. 
    Then 
    \[
    \TP^{(q+1,q)}(\gamma) \geq \pi \int_0^1 \sin (\pi w)^{q-1} \dd w,
    \]
    with equality iff $\Image( \gamma)$ is a circle.
\end{theorem}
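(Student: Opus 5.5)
The plan is to combine the minorant inequality of \aref{lem: F^pq TP^pq relation} with the lower bound of \aref{lem: Minimizer of S p=q+1}, via the standard argument \aref{eq: minorant argument}. For $q>1$ we have $p=q+1\in\intervaloo{q,2q}$. Hence \aref{lem: F^pq TP^pq relation} yields $F^{(q+1,q)}(\gamma)\leq \TP^{(q+1,q)}(\gamma)$ for every $\gamma\in W^{1,\infty}_{\mathrm{i,a}}$. Moreover, \aref{lem: Minimizer of S p=q+1} gives $F^{(q+1,q)}(\gamma)\geq \pi\int_0^1\sin(\pi w)^{q-1}\dd w$. Chaining the two inequalities proves the lower bound. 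For a circle $\gamma_C$ both inequalities are equalities: the circle is convex, so \aref{lem: Minimizer of S p=q+1} is sharp, and the Young step is sharp because of the symmetry \aref{eq: Circle property}. This also matches the value $L^{q+2-p}\pi^{p-q}\int_0^1\sin(\pi w)^{2q-p}\dd w$ from \aref{thm: main theorem TP} with $L=1$.

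For uniqueness, suppose $\TP^{(q+1,q)}(\gamma)=\pi\int_0^1\sin(\pi w)^{q-1}\dd w$. Then both inequalities in the chain are equalities. Equality in \aref{lem: Minimizer of S p=q+1} forces $\gamma$ to be convex. Equality in the Young step of \aref{lem: F^pq TP^pq relation} forces \aref{eq: Circle property}, i.e.\ $\abs{P^\perp_{\gamma'(u+w)}(\gamma(u+w)-\gamma(u))}=\abs{P^\perp_{\gamma'(u)}(\gamma(u+w)-\gamma(u))}$ for a.e.\ $(u,w)$. Here we need strictness of Young's inequality: with $\alpha=1$ and $\beta=q-1>0$ the exponents $q_1=q$ and $p_1=q'$ are both finite, so $a^\alpha b^\beta=\frac{1}{q_1}a^{q}+\frac{1}{p_1}b^{q}$ holds iff $a=b$. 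This is exactly where $q>1$ is needed; for $q=1$ we have $\beta=0$ and the Young step is vacuous.

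Finally, the second part of \aref{lem: F^pq TP^pq relation}, which rests on the appendix result \aref{thm:UmlaufwinkelsatzProjektorEig}, states that among convex curves \aref{eq: Circle property} holds iff $\Image(\gamma)$ is a circle. Since $\gamma$ is already known to be convex, we conclude that $\Image(\gamma)$ is a circle.

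The main obstacle is making sure the equality analysis in \aref{lem: F^pq TP^pq relation} applies to merely Lipschitz arc-length curves, i.e.\ that a.e.\ equality in the integrated Young inequality gives \aref{eq: Circle property} almost everywhere. This follows because the integrand difference is nonnegative and has zero integral, once finiteness of $\TP^{(q+1,q)}(\gamma)$ is noted, which holds at the minimum. One also has to check that convexity from the $F$-step is available before invoking the characterization; both facts come from the same equality case, so the ordering works.
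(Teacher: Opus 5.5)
Your proposal is correct and follows the paper's argument: you chain $F^{(q+1,q)}\leq \TP^{(q+1,q)}$ (available because $q+1\in(q,2q)$ when $q>1$) with the lower bound for $F^{(q+1,q)}$. Uniqueness then follows because equality in the $F$-bound forces convexity, and equality in the Young step, among convex curves, forces $\Image(\gamma)$ to be a circle. Your remarks on strictness of Young's inequality and on passing from integrated equality to almost-everywhere equality make explicit what the paper leaves implicit.
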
 
Note that, in order to apply the second part of \aref{lem: F^pq TP^pq relation}, we need $q+1 \in (q,2q)$ and thus $q > 1$.
Furthermore, \aref{lem: Minimizer of S p=q+1} rules out any non-convex curve as a minimizer and \aref{lem: F^pq TP^pq relation} rules out any convex curve, of which the image is not a circle. 
The combination of both yields the uniqueness of the circle as a minimizer of $\TP^{(q+1,q)}$, as claimed above.

\subsection{The case $p<2q$}\label{section: p<2q}
In this section, we investigate the global minimizer of the tangent-point energies for $p<2q$. 
Unfortunately, for $p \in (q+1,2q)$, neither $G_w$ nor $F_u$ are minimized by the circle.
However, similarly to $p >2q$, where we fell back to the geometric case $p=2q$, we minimize $\TPE^{(p,q)}$ for some $p \in (q+1,2q)$ by falling back to the case $p=q+1$. 
Our approach, has the somewhat counterintuitive drawback that it does only work for $p$ being slightly smaller than $2q$.

	\begin{theorem}\label{thm: circle minimizer for p>=2q-2}
		Let $\gamma \in W^{1,\infty}_{\mathrm{i,a}}$ and $q>1, p \in \intervalco{2q-2,2q} \cap (q+1,2q)$. 
		Then
		\[
		\TP^{(p,q)}(\gamma)
		\geq 
		\pi ^{p-q} 
		\int _0^1 
		\sin (\pi w)^{2q-p} 
		\dd w \, ,
		\]
		with equality iff $\Image ( \gamma )$ is a circle.
	\end{theorem}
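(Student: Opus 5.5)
The plan is to fall back to the lower limit case $p=q+1$, in the same way that \aref{lem: TP_w min p >= 2q} falls back to the geometric case. The fallback uses the reverse Hölder inequality \aref{lem:RevHoelder}, splitting off a power of the chord length that Wirtinger (\aref{lem: circle max of Delta gamma |^r}) controls. I write $\Delta(u,w)\ceq\abs{\gamma(u+w)-\gamma(u)}$. I also write $A(u,w)$ for the integrand of $\TP^{(q+1,q)}$, or rather of its minorant $F^{(q+1,q)}$, which is $\abs{P^\perp_{\gamma'(u)}\GaussMap_\gamma}^{q-1}\abs{\partial_w\GaussMap_\gamma}$. Then pointwise
\[
\frac{\abs{P^\perp_{\gamma'(u)}(\gamma(u+w)-\gamma(u))}^q}{\Delta^p}
= \frac{\abs{P^\perp_{\gamma'(u)}(\gamma(u+w)-\gamma(u))}^q}{\Delta^{q+1}}\,\Delta^{-(p-q-1)},
\]
with $p-q-1\in\intervaloo{0,q-1}$.

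First I would apply \aref{lem:RevHoelder} on $\Domain\times\Domain$ with some exponent $r>1$ to the product of these two factors. The first factor then appears as $\bigl(\iint(\cdot)^{1/r}\bigr)^r$ and the second as $\bigl(\iint \Delta^{(p-q-1)/(r-1)}\bigr)^{-(r-1)}$. The aim is to choose $r$ so that three things hold at once.

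(a) The first factor, taken to the power $1/r$, is again of the type $\Funct_3$, so that \aref{thm:MinI5} or \aref{lem:MinI5} applies. This is possible only if the power of $\abs{\partial_w\GaussMap_\gamma}$ stays $1$. So I would not take $A^{1/r}$ naively. Instead I would first use \aref{lem: F^pq TP^pq relation} with the Young splitting adapted to the exponents, and redistribute so that the part raised to $1/r$ carries $\abs{\partial_w\GaussMap_\gamma}^1$ times a power $\abs{P^\perp_{\gamma'(u)}\GaussMap_\gamma}^{\beta'}$ with $\beta'>-1$.

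(b) The exponent $x\mapsto x^{(p-q-1)/(2(r-1))}$ is concave, so that \aref{lem: circle max of Delta gamma |^r} bounds $\int_\Domain \Delta^{(p-q-1)/(r-1)}\dd u\le (\sin(\pi w)/\pi)^{(p-q-1)/(r-1)}$ for each $w$. Integrating in $w$ then gives the circle value.

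(c) The equality case of \aref{lem:RevHoelder} is attained at the circle, meaning the ratio of the two factors is constant there. This forces the Hölder split to be taken for fixed $w$ (as in \aref{lem:MinI4}), not on the full product space. Integrating in $w$ afterwards, the constants must reassemble into $\pi^{p-q}\int_0^1\sin(\pi w)^{2q-p}\dd w$.

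Conditions (a) and (b) together should reproduce exactly the restriction $2q-p\le 2$, i.e.\ $p\ge 2q-2$. This is the analogue of the bound $p\le 4q-2$ in \aref{lem: TP_w min p >= 2q}, and I expect the concavity exponent in (b) to equal $(2q-p)/2\le1$ at the optimal $r$.

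For uniqueness, equality forces equality in \aref{lem: circle max of Delta gamma |^r}, hence $\Image(\gamma)$ is a circle. Alternatively, equality in \aref{lem:MinI5} forces convexity, and then \aref{lem: F^pq TP^pq relation} forces a circle. The standard minorant argument \aref{eq: minorant argument} concludes.

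The main obstacle is step (a)/(c). The $w$-variable bound from \aref{lem:MinI5} holds for fixed $u$, while the Wirtinger bound holds for fixed $w$, so the reverse Hölder step must be organised so that both can be used. Moreover, the circle must give equality in every inequality simultaneously. A naive reverse Hölder in $w$ alone makes the equality case $\theta'\sin^{\beta/\alpha}\theta\equiv\mathrm{const}$ along $\eta_{\gamma,u}$, which is not the circle. What I would try first is to work at fixed $w$ in the $u$-variable with the Gauss-map length from \aref{lem:LengthOfGaussMapinU}, combined with the Young/$F$-splitting. Only if that fails would I fall back to a full double-integral Hölder with weights chosen by matching the circle.
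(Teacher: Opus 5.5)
Your outline has the right ingredients: a Young splitting, a reverse H\"older/Young step, a lower-limit functional, and the Wirtinger-type bound with exponent $2q-p$. There is a genuine gap, though. The decisive step, namely the actual splitting and the way the reverse step is organised, is never carried out, and the route you say you would try first fails.

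\emph{Why the fixed-$w$ route fails.} The route through $\partial_u\GaussMap_\gamma$ and \aref{lem:LengthOfGaussMapinU} is the $G^{(p,q)}_w$-route. For $p<2q$ it leaves the chord factor $\abs{\gamma(u+w)-\gamma(u)}^{2q-p}$ with a \emph{positive} power. A reverse H\"older step then produces $\bigl(\int_\Domain\abs{\gamma(u+w)-\gamma(u)}^{-s}\dd u\bigr)^{-(r-1)}$ with $s>0$, and the circle bounds this from above, not from below.

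\emph{Why your reason for fixed $w$ is wrong.} At the circle every factor depends on $w$ only. With matched exponents, both factors of the H\"older pairing are constant multiples of $\sin(\pi w)^{2q-p}$. So the equality case of \aref{lem:RevHoelder} on all of $\Domain\times\Domain$ is attained at the circle. The fixed-$u$ bound of \aref{lem:MinI5} and the fixed-$w$ Wirtinger bound are then simply integrated separately.

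\emph{Which splittings cannot work.} The split of the $F^{(p,q)}$-lemma (exponents $p-q$ and $2q-p$) leaves no chord factor at all, so only a non-sharp Jensen bound remains. Nor can you redistribute after raising $\abs{P^\perp_{\gamma'(u)}(\gamma(u+w)-\gamma(u))}^q\abs{\gamma(u+w)-\gamma(u)}^{-q-1}$ to the power $1/r$. That always leaves a residual factor $\abs{\gamma(u+w)-\gamma(u)}^{1-1/r}$, so the result is not of $\Funct_3$-type.

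The missing idea, which is what the paper does, is to split \emph{before} the reverse step, with exponents forced by the circle. Use the symmetry $(u,u+w)\leftrightarrow(u+w,u)$ and Young with weights $\frac{2q-p}{2q-p+1}$ and $\frac{1}{2q-p+1}$. Put the exponent $a\coloneqq\frac{q}{2q-p+1}$ on $\abs{P^\perp_{\gamma'(u+w)}(\gamma(u+w)-\gamma(u))}$ and $q-a$ on $\abs{P^\perp_{\gamma'(u)}(\gamma(u+w)-\gamma(u))}$. This gives
\[
\TP^{(p,q)}(\gamma)\geq\int_\Domain\int_0^1\bigl(\abs{\partial_w\GaussMap_\gamma}\,\abs{P^\perp_{\gamma'(u)}\GaussMap_\gamma}^{2q-p}\bigr)^{a}\,\abs{\gamma(u+w)-\gamma(u)}^{\mu}\dd w\dd u,
\qquad \mu\coloneqq a+q-p<0.
\]
Next apply either the pointwise reverse Young inequality \aref{lem:RevYoung}, with a power of $\pi$ inserted so that equality holds pointwise at the circle (the paper's choice), or \aref{lem:RevHoelder} on $\Domain\times\Domain$ with $r=a$. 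Both give
\[
\TP^{(p,q)}(\gamma)\geq\bigl(F^{(q'+1,q')}(\gamma)\bigr)^{a}\bigl(\textstyle\iint\abs{\gamma(u+w)-\gamma(u)}^{2q-p}\bigr)^{1-a},\qquad q'=2q-p+1.
\]
So the fallback is the lower-limit case for $q'$, not for $q$. Its lemma bounds the first factor by $\pi\int_0^1\sin(\pi w)^{2q-p}\dd w$. The Wirtinger lemma with the concave power $(2q-p)/2$ bounds the second factor. The exponents reassemble to exactly $\pi^{p-q}\int_0^1\sin(\pi w)^{2q-p}\dd w$. The constraints $a>1$, $q-a>0$ and $2q-p\le 2$ are precisely $p>q+1$, $p<2q$ and $p\ge 2q-2$. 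Uniqueness follows from the equality case of the Wirtinger lemma.

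Your predicted concavity exponent $(2q-p)/2$ is correct. Without this explicit choice, however, the proposal does not show that a working splitting exists, so it does not yet prove the theorem.
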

	
	\begin{proof}
		Define 
		\[
		\sigma
		\ceq 
		(2q-p)  
		\frac {q}{2q-p+1} 
		\; 
		,   
		\quad
		\mu 
		\ceq (2q-p)  
		\frac{q-p+1}{2q-p+1}
		\]
		A short computation yields $\sigma + \mu = 2q-p \in \intervaloc{0,2}$, $\frac{\sigma }{\sigma + \mu } + \sigma= q$ 
		and $\fdfrac{2\sigma}{\sigma + \mu} + \sigma - \mu =p$.
		Furthermore, we know that $\sigma >0$ and $\mu<0$. 
		Let 
		\[
		q_1
		\ceq 1 + \frac{1}{\sigma + \mu} 
		\text{, } 
		p_1
		\ceq 1+ \sigma + \mu 
		\text{, }
		q_2
		\ceq \frac{\sigma + \mu}{\sigma } 
		\text{ and }
		p_2
		\ceq \frac{\sigma+ \mu}{\mu} 
		\]
		One computes $p_1,q_1>1$, $q_2\in (0,1)$, $p_2<0$ and $\frac{1}{p_i}+ \frac{1}{q_i}=1$.
		Note that $\sigma  q_1= \frac{\sigma}{\sigma + \mu} p_1=q$. 
		Let $\gamma \in W^{1,1}_{\mathrm{i,a}}$.
		For the sake of short notation, we abbreviate $\Delta_{u,u+w}\gamma\ceq \gamma(u+w)-\gamma(u)$.
		Using Young's inequality, one obtains
		\begin{align*}
			\TP^{(p,q)}(\gamma)
			&= 
			\frac{1}{q_1} 
			\int _{\Domain} 
			\int _{0}^{1}
			\frac{
				\abs{
					P^\perp _{\gamma ^\prime (u)} \Delta_{u+w,u} \gamma
				}^q
			}
			{
				\abs{\Delta_{u+w,u} \gamma }
				^p
			}
			\dd w 
			\dd u
			+  
			\frac{1}{p_1}
			\int _{\Domain} 
			\int _{0}^{1}
			\frac{
				\abs{
					P^\perp _{\gamma ^\prime (u+w)} \Delta_{u+w,u} \gamma
				}
				^q
			}
			{
				\abs{\Delta_{u+w,u} \gamma }^p
			}
			\dd w  
			\dd u
			\\
			&=
			\int _{\Domain} 
			\int _{0}^{1}
			\frac{
				\frac{1}{q_1} 
				\abs{
					P^\perp _{\gamma ^\prime (u)} \Delta_{u+w,u} \gamma
				}^{\sigma q_1}  
				+ 
				\frac{1}{p_1} 
				\abs{
					P^\perp _{\gamma ^\prime (u+w)} \Delta_{u+w,u} \gamma
				}^{\frac{\sigma}{\sigma + \mu} p_1}
			}
			{
				\abs{\Delta \gamma }^p
			}
			\dd w 
			\dd u
			\\
			&\geq
			\int _{\Domain}
			\int _{0}^{1} 
			\frac{
				\abs{
					P^\perp _{\gamma ^\prime (u+w)} \Delta_{u+w,u} \gamma
				}^\frac{\sigma}{\sigma + \mu}  
				\abs{
					P^\perp _{\gamma ^\prime (u)} \Delta_{u+w,u} \gamma
				}^\sigma
			}
			{
				\abs{
					\Delta_{u+w,u} \gamma  
				}^p
			} 
			\dd w  
			\dd u
			\\
			&=
			\int _{\Domain} 
			\int _{0}^{1}  
			\left(
			\frac{
				\abs{
					P^\perp _{\gamma ^\prime (u+w)} \Delta_{u+w,u} \gamma
				}
			}
			{
				\abs{\Delta_{u+w,u} \gamma}^2
			}
			\right)^\frac{\sigma}{\sigma + \mu}  
			\left(
			\frac{
				\abs{
					P^\perp _{\gamma ^\prime (u)} \Delta_{u+w,u} \gamma
				}
			}
			{
				\abs{\Delta_{u+w,u} \gamma  }
			}
			\right)^\sigma 
			\abs{
				\Delta_{u+w,u} \gamma 
			}^\mu 
			\dd w  
			\dd u
		\end{align*}
		Here, Young's inequality is an equality for the circle, because $\abs{P^\perp _{\gamma ^\prime (u)} \Delta_{u+w,u} \gamma}=\abs{P^\perp _{\gamma ^\prime (u+w)} \Delta_{u+w,u} \gamma}$.
		Using the inverse Young inequality (i.e. \aref{lem:RevYoung}), we further estimate $\TP^{(p,q)}(\gamma)$ as follows.
		\begin{align*}
			&\TP^{(p,q)}(\gamma) 
			\geq
			\int _{\Domain} 
			\int _{0}^{1}  
			\left(
			\frac{
				\abs{
					P^\perp _{\gamma ^\prime (u+w)} \Delta_{u+w,u} \gamma
				}
			}
			{
				\abs{\Delta_{u+w,u} \gamma }^2
			}
			\right)^\frac{\sigma}{\sigma + \mu}  
			\left(
			\frac{
				\abs{
					P^\perp _{\gamma ^\prime (u)} \Delta_{u+w,u} \gamma
				}
			}
			{
				\abs{
					\Delta_{u+w,u} \gamma  
				}
			}
			\right)^\sigma 
			\abs{
				\Delta_{u+w,u} \gamma 
			}^\mu 
			\dd w  
			\dd u
			\\
			&=
			\pi ^{
				-\mu  
				\frac{\sigma + \mu +1}{\sigma + \mu}
			}
			\int _{\Domain} 
			\int _{0}^{1}  
			\left(
			\frac{
				\abs{
					P^\perp _{\gamma ^\prime (u+w)} \Delta_{u+w,u} \gamma
				}
			}
			{
				\abs{
					\Delta_{u+w,u} \gamma 
				}^2
			}
			\right)^\frac{\sigma}{\sigma + \mu}  
			\left(
			\frac{
				\abs{
					P^\perp _{\gamma ^\prime (u)} \Delta_{u+w,u} \gamma
				}
			}
			{
				\abs{
					\Delta_{u+w,u} \gamma  
				}
			}
			\right)^\sigma 
			\left(
			\abs{\Delta_{u+w,u} \gamma }  
			\pi ^\frac{\sigma + \mu +1}{\sigma + \mu} 
			\right)^\mu 
			\dd w  
			\dd u 
			\\
			&\geq
			\pi ^{
				-\mu  
				\frac{\sigma + \mu +1}{\sigma + \mu}
			} 
			\int _{\Domain} 
			\int _{0}^{1} 
			\frac{1}{q_2}  
			\left[ 
			\left(
			\frac{
				\abs{
					P^\perp _{\gamma ^\prime (u+w)} \Delta_{u+w,u} \gamma
				}
			}
			{
				\abs{
					\Delta_{u+w,u} \gamma 
				}^2
			}
			\right)^\frac{\sigma}{\sigma + \mu}  
			\left(
			\frac{
				\abs{
					P^\perp _{\gamma ^\prime (u)} \Delta_{u+w,u} \gamma
				}
			}
			{
				\abs{
					\Delta_{u+w,u} \gamma  
				}
			}
			\right)^\sigma 
			\right]^{q_2} 
			\dd w
			\dd u
			\\
			&\quad+  
			\pi ^{
				-\mu  
				\frac{\sigma + \mu +1}{\sigma + \mu}
			} 
			\int _{\Domain} 
			\int _{0}^{1}
			\frac{1}{p_2} 
			\left( 
			\abs{
				\Delta_{u+w,u} \gamma 
			}  
			\pi ^\frac{\sigma + \mu +1}{\sigma + \mu} 
			\right)^{\mu \cdot p_2} 
			\dd w 
			\dd u
			\\
			&=
			\pi ^{
				- \mu 
				\frac{\sigma + \mu +1}{\sigma + \mu}
			} 
			\frac{1}{q_2}
			\int _{\Domain} 
			\int _{0}^{1} 
			\left(
			\frac{
				\abs{
					P^\perp _{\gamma ^\prime (u+w)} \Delta_{u+w,u} \gamma
				}
			}
			{
				\abs{
					\Delta_{u+w,u} \gamma 
				}^2
			}
			\right) 
			\left(
			\frac{
				\abs{
					P^\perp _{\gamma ^\prime (u)} \Delta_{u+w,u} \gamma
				}
			}
			{
				\abs{
					\Delta_{u+w,u} \gamma  
				}
			}
			\right)^{\sigma + \mu} 
			\dd w 
			\dd u
			\\
			&\quad
			+
			\pi ^{
				[
				(\sigma + \mu) - \mu
				]  
				\frac{\sigma + \mu +1}{\sigma + \mu}} 
			\frac{1}{p_2}
			\int _{\Domain} 
			\int _{0}^{1}
			\abs{
				\Delta_{u+w,u} \gamma 
			}^{\sigma + \mu} 
			\dd w 
			\dd u 
			\\
		\end{align*}
		By the choice of $\sigma$ and $\mu$, again equality is attained for the circle.\\
		Since the first integral is equal to $F^{(\sigma +\mu +2, \sigma + \mu +1)}(\gamma)$, we apply \aref{lem: Minimizer of S p=q+1} and conclude that it is uniquely minimized by the circle. For the second integral, since $\sigma + \mu \in \intervaloc{0,2}$, we use \aref{lem: circle max of Delta gamma |^r} and conclude that it is uniquely maximized by the circle.
		Finally, $p_1>0$ and $p_2<0$ imply that $\TPE^{(p,q)}$ is uniquely minimized by the circle.
\end{proof}	
\begin{remark}\label{rmk:LimitsOfOurMethods}
	Note that the range of $p$ in \aref{thm: circle min p geq 2q} and \aref{thm: circle minimizer for p>=2q-2} can be extended to $p\in \intervaloo{2q-c^*, 2q}\cap (q+1,2q)$ and $p\in \intervalco{2q, 2q+c^*(q-1)}\cap \intervaloo{2q,2q+1}$ respectively, 
	where $c^*$ is the maximal exponent $r$ such that \aref{thm: Wirtinger type inequality}, respectively \aref{lem: circle max of Delta gamma |^r}, hold true.
	By~\cite{ABRAMS2003381} and~\cite{exner2007critical}, we know that $c^*\in \intervalcc{2,2.5}$.
\end{remark}
\section{Minimizing Fractional Willmore energies}\label{sec:NonlocalWillmore}
We also want to highlight the close relation between $\TP^{(p,q)}$ and the fractional Willmore energies.
For $s\in (0,1), p\geq 1$ and a sufficiently nice, open set $E\subset \R^n$, they are given by 
\[
    \mathcal{W}_{s,p}(E)
    =
        \int_{\partial E}
            \abs{H_{\partial E, s}(x)}^p
        \dd \Hmeasure^{n-1}(x)
    ,
\]
where 
\[
    H_{\partial E,s}(x)
    = 
    \pm c_s
    \int_{\partial E}
        \frac{\inner{n(y), x-y}}{\abs{x-y}^{n+1+s}}
    \dd \Hmeasure^{n-1}(y)
\]
is the nonlocal mean curvature, introduced by~\cite{CaffarelliSavinRoquejoffreNonLocalMinSurfaces}.
The sign in front of the integral depends on whether the normal is pointing inward or outward.
In recent years, such nonlocal energies have become an active field of research.
Especially, fractional Willmore and bending energies have been investigated.
For example, in~\cite{blatt2025fractional}, the authors investigated regularizing effects of such energies and proved a number of structural theorems.
More recently, Giacomin and Schikorra investigated the scale-invariant case for convex subsets in $\R^2$ and were able to show that in this setting, minimizers exist (see~\cite{giacomin2024convex}).
Since in this case, the boundary of the set is a convex curve, we are able to show that these energies are minimized by round circles. 
In order to do so, we observe that the following holds true for convex curves $\gamma\in W^{1,\infty}_{\mathrm{i,a}}(\Domain, \R^2)$.
\begin{align*}
    H_{\partial E,s}(x)
    &=
    \pm c_s \int_\Domain
        \frac{
            \inner{n(\gamma(y)), \gamma(x)-\gamma(y)}
        }{\abs{\gamma(y)- \gamma(x)}^{2+s}}
    \dd y
    =
    c_s
    \int_\Domain
        \frac{
            \abs{P^\perp_{\gamma'(u+w)}(\gamma(u+w)-\gamma(u))}
        }{\abs{\gamma(u+w)-\gamma(u)}^{2+s}}
    \dd w
    \\
    &=
    c_s
    \int_\Domain
        \abs{\partial_w \GaussMap_\gamma}\abs{\gamma(u+w)-\gamma(u)}^{-s}
    \dd w
\end{align*}
It now becomes clear, that $(\TP^{(2+s,1)})^p$ is a suitable lower and $\TP^{(p(2+s),p)}$ a suitable upper bound for $\mathcal{W}_{s,p}$ among the class of convex curves.
\begin{proof}[of \aref{thm:NonlocalWillmoreMinByCircle}]
    Let $ s\in \intervaloo{0,1}$, $p\geq 1$ and $E\in X=\{E\subset \R^2 \text{ convex }\}$ be given. 
    Then there exists $\gamma\in W^{1,1}_{\mathrm{i,r}}(\Domain, \R^2)\cap \{\gamma \text{ convex}\}$ such that $\partial E=\Image(\gamma)$.
    Since $\mathcal{W}_{s,p}$ is geometric and $\mathcal{W}_{s,p}(\lambda \gamma)= \lambda ^{1-sp} \mathcal{W}_{s,p}(\gamma)$ for all $\lambda>0$, we assume $\gamma\in W^{1,\infty}_{\mathrm{i,a}}$.
    Using Jensen, we conclude that
    \begin{align*}
        \mathcal{W}_{s,p}(\gamma)
        &\geq
        \pars*{
            \int_{\Domain}
            \int_{0}^{1}
                \frac{\abs{P^\perp_{\gamma'(u+w)}(\gamma(u+w)-\gamma(u))}}{\abs{\gamma(u+w)-\gamma(u)}^{2+s}}
            \dd w \dd u
        }^p
        =
        (\TP^{2+s,1}(\gamma))^p
    \end{align*}
    Applying \aref{thm: circle min p geq 2q}, we obtain
    \[
         \mathcal{W}_{s,p}(\gamma)
        \geq
        \pars*{
            \pi^{1+s}
            \int_0^1
                \sin(\pi w)^{-s}
            \dd w
        }^p
    ,
    \]
    with equality iff $\Image(\gamma)$ is a circle.
\end{proof}

\section*{Acknowledgments}
The authors want to thank Heiko von der Mosel, Philipp Reiter and Simon Blatt for their invaluable feedback during the preparation of this article.
Elias Döhrer gratefully acknowledges the funding support from the European Union and the Free State of Saxony (ESF).
Alexander Dohmen is funded by the DFG-Graduiertenkolleg Energy, Entropy, and Dissipative Dynamics (EDDy), project no. 320021702/GRK2326.

%% file: appendix.tex
\section{Appendix}\label{sec:appendix}
First, we state the circumference angle theorem of circles, put into a more modern language.
The original version can be found in \cite[book 3, Proposition 26]{euclid1956elements}.
\begin{theorem}[\cite{euclid1956elements}]
    \label{thm:Umlaufwinkelsatz}
   Let $\gamma$ be a circle and let $x,y$ be two arbitrary, distinct points on $\Image(\gamma)$.
   Then $\gamma$ satisfies the following property (P) for $x,y$:
   The angle of the line segments $\overline{xz}$, respectively $\overline{yz}$, for a third point $z\in \Image(\gamma)$, is constant on the arcs of $\Image(\gamma)$ connecting $x$ and $y$.
\end{theorem}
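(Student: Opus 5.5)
This is the inscribed angle theorem: for distinct points $x,y$ on a circle, the angle $\Angle{x-z}{y-z}$ is constant as $z$ moves along either of the two open arcs of $\Image(\gamma)$ joining $x$ and $y$. I will reduce it to the central angle theorem by working in the plane of the circle. Let $m$ be the center and $r$ the radius. Identify that plane with $\mathbb{C}$, putting $m=0$ and $r=1$ after a similarity, which preserves angles. Write $x=e^{ia}$, $y=e^{ib}$ and $z=e^{it}$, with $t$ ranging over one of the two open arcs, say $t\in(a,b)$ where $a<b<a+2\pi$.

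The key step is an explicit computation of the direction of the chords. From the half-angle identity
\[
e^{ia}-e^{it}=e^{i(a+t)/2}\pars*{e^{i(a-t)/2}-e^{-i(a-t)/2}}=2i\sin\pars*{\tfrac{a-t}{2}}\,e^{i(a+t)/2},
\]
the vector $x-z$ points in direction $\pm i\,e^{i(a+t)/2}$. Likewise, $y-z$ points in direction $\pm i\,e^{i(b+t)/2}$. The signs are fixed by the signs of $\sin((a-t)/2)$ and $\sin((b-t)/2)$. For $t\in(a,b)$ the first is negative and the second is positive, so the signs never change along the arc.

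Taking the quotient of the two directions removes the dependence on $t$. It equals $-e^{i(b-a)/2}$ on this arc, so the oriented angle between $\overline{zx}$ and $\overline{zy}$ is $\pi-(b-a)/2$ modulo $2\pi$, and this is independent of $z$. On the complementary arc, $t\in(b,a+2\pi)$, both sines are negative. The quotient is then $e^{i(b-a)/2}$, again independent of $t$. The two values are supplementary, which is the classical statement about opposite arcs.

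The only point needing care is bookkeeping rather than a real obstacle. One must make sure the sign of each sine is constant on each open arc, which the choice of $t$-intervals guarantees. One must also state whether "angle" means the oriented angle or the unoriented one in $[0,\pi]$. The constancy holds for both, since the unoriented angle is a fixed function of the oriented one.
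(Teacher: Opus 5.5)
Your argument is correct, and the computation checks out: the factorization, the signs of the two sines on each open arc, and both quotients are right. The paper gives no proof of this statement. It quotes it as a classical fact from Euclid, whose route is synthetic. There one shows, via isosceles triangles through the centre, that the inscribed angle at $z$ is half the central angle over the arc not containing $z$; this needs a case distinction according to where the centre lies relative to the angle $xzy$. Constancy on each arc then follows because the central angle does not involve $z$.

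Your half-angle factorization $e^{ia}-e^{it}=2i\sin(\frac{a-t}{2})\,e^{i(a+t)/2}$ avoids both the central angle and the case split. The $t$-dependence cancels in the quotient of the chord directions, and all that remains is the sign bookkeeping. It also gives more than is stated. Besides the explicit values, the same factorization (or letting $z$ tend to $x$ and to $y$) yields the tangent--chord version: the chord $\overline{xy}$ makes the same angle $(b-a)/2$ with the tangents at both endpoints. That is the form of the circle property the paper actually relies on later, e.g.\ $P^\perp_{\GaussMap_\gamma}\gamma'(u+w)=-P^\perp_{\GaussMap_\gamma}\gamma'(u)$ in the proof of \aref{lem: G minorant for TP}, and the equality case \aref{eq: Circle property}.

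One bookkeeping remark concerns orientation. With $d_x=-ie^{i(a+t)/2}$ and $d_y=ie^{i(b+t)/2}$, your quotient $-e^{i(b-a)/2}$ is $d_y/d_x$, whose argument is $\pi+(b-a)/2$. So $\pi-(b-a)/2$ is the unoriented angle, or equivalently the oriented angle measured from $\overline{zy}$ to $\overline{zx}$. Measured consistently from $\overline{zx}$ to $\overline{zy}$, the two arcs give $\pi+(b-a)/2$ and $(b-a)/2$. These agree modulo $\pi$, so your computation in fact shows that the oriented angle modulo $\pi$ is constant on all of $\Image(\gamma)\setminus\{x,y\}$. That covers both arcs in a single statement.
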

Among convex curves the converse also holds true.
\begin{lemma}\label{lem:UmlaufWinkelSatzScharfBeiKreis}
    Let $\gamma\in W^{1,1}_{\mathrm{i,r}}$ be a convex curve.
    If $\gamma$ satisfies the property (P) for almost all $x,y$ and almost all third points, then $\Image(\gamma)$ is a circle
\end{lemma}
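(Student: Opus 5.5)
Since $\gamma$ is convex it is planar, so I work in $\R^2$, and I assume first that $\gamma$ is parametrized with respect to arc length. Property (P) for two points $x=\gamma(a)$, $y=\gamma(b)$ means the following. The inscribed angle $\angle(x-z,\,y-z)$ at $z=\gamma(c)$ is constant as $c$ ranges over either arc. Equivalently, for fixed $x,y$, every point $z$ on one arc lies on a single circular arc through $x$ and $y$: the locus of points seeing the segment $\overline{xy}$ under a fixed angle, on a fixed side of the line through $x$ and $y$, is exactly one circular arc. Convexity guarantees that all $z$ on one arc of $\Image(\gamma)$ lie on the same side of that line, so no ambiguity in the side arises.

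The plan is therefore to fix one good pair $(x,y)$ for which (P) holds for almost all third points $z$. Then almost every point of each of the two arcs of $\Image(\gamma)$ between $x$ and $y$ lies on a circle $C_1$ (for one arc) and $C_2$ (for the other), both passing through $x$ and $y$. By continuity of $\gamma$, and since a full-measure set of parameters is dense, the whole arcs lie on $C_1$ and $C_2$ respectively. It remains to show $C_1=C_2$.

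For this I would vary the pair. I pick a second good pair $(x',y)$, where $x'$ lies on the interior of the first arc and close to $x$, chosen such that (P) holds for $(x',y)$ and almost all third points. Such a choice is possible because (P) holds for almost all pairs, by Fubini applied to the exceptional set. Now the arc from $x'$ to $y$ that passes through $x$ contains a nondegenerate piece of the second arc. That piece lies on $C_2$. It also lies on the single circle $C_3$ through $x'$ and $y$ determined by this arc, and so does the subarc between $x'$ and $x$, which lies on $C_1$. Two circles that share a nondegenerate arc coincide, so $C_1=C_3=C_2$, and $\Image(\gamma)$ is a circle.

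The main obstacle is the measure-theoretic bookkeeping. One must choose the good pairs and the almost-every third points so that each relevant arc contains at least three non-collinear points that lie on the circle in question. This follows from Fubini together with the density of full-measure sets and the continuity of $\gamma$. One also has to make sure that the arcs are nondegenerate, i.e.\ not line segments. A convex closed curve cannot be entirely contained in a line, so a curve satisfying (P) cannot lie on a line. On any nontrivial arc, three points that are collinear would force the inscribed angle to be $\pi$, and constancy of that angle on an open subarc is impossible unless the arc is contained in the segment $\overline{xy}$. For almost every pair this is excluded by injectivity together with convexity.
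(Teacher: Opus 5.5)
Your proposal is correct and follows essentially the paper's route: for one good pair, the constant-inscribed-angle locus places each of the two arcs on a circle, and a second good pair whose connecting arc overlaps both arcs forces the two circles to coincide (the paper picks the second pair near the parameter midpoints of the two arcs, which avoids your Fubini step of keeping $y$ fixed). One small inaccuracy: convexity and injectivity alone do not exclude straight arcs for almost every pair (pairs on a common flat piece of the boundary form a set of positive measure), but this is harmless, since you only need nondegenerate good pairs to exist, and your overlap step already rules out a straight arc because a straight piece cannot lie on a circle.
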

\begin{proof}
    Let $\gamma\in W^{1,1}_{\mathrm{i,r}}$ be convex and $x\neq y\in \Domain$ be arbitrary, such that (P) is satisfied for $\gamma(x),\gamma(y)$ and almost all third points.
    Since $\gamma$ is a convex embedding, we may assume that $\gamma\vert_{[x,y]}$ is not a straight line.
    By convexity, this chord is contained in one of the halfspaces, induced by the secant.
    Furthermore, we know that 
    \[
        c=
        \Angle{
            \frac{\gamma(z)-\gamma(x)}{\abs{\gamma(z)-\gamma(x)}}
        }{
            \frac{\gamma(z)-\gamma(y)}{\abs{\gamma(z)-\gamma(y)}}
        }
        \text{ for almost all } z\in (x,y)
        .
    \]
    Since $\gamma$ is absolutely continuous and injective, this map is continuous for $z\in(x,y)$.
    Hence, (P) holds true for all $\gamma(z)\in \Image(\gamma\vert_{(x,y)})$.
    Using a Moebius transformation, we directly conclude that $\gamma\vert_{[x,y]}$ is a circular arc.
    Repeating this construction with $z_1\in B_{\epsilon}(\frac{x+y}{2})$ and $z_2\in B_\epsilon(\frac{x+y+1}{2})$ now yields that $\gamma$ is a circle.
\end{proof}

\begin{theorem}\label{thm:UmlaufwinkelsatzProjektorEig}
    Let $\gamma \in W^{1,1}_{\mathrm{i,r}}(\Domain, \R^2)$ be convex.
    Then for almost all $u,w\in \Domain$
    \begin{equation}
        \label{eq:ProjektorEig}
        \abs{
            P^\perp_{\gamma'(u+w)}(
                \frac{
                    \gamma(u+w)-\gamma(u)
                }{
                    \abs{\gamma(u+w)-\gamma(u)}
                }
            )
        }
        =
        \abs{
            P^\perp_{\gamma'(u)}(
                \frac{
                    \gamma(u+w)-\gamma(u)
                }{
                    \abs{\gamma(u+w)-\gamma(u)}
                }
            )
        }
    .
    \end{equation}
    if and only if $\gamma$ satisfies the property (P) for almost all points.
    If one of the two statements holds true, then $\Image(\gamma)$ is a circle.
\end{theorem}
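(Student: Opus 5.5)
The plan is to translate the projector identity \eqref{eq:ProjektorEig} into a statement about angles in the plane, and then to invoke \aref{lem:UmlaufWinkelSatzScharfBeiKreis} for the final claim.

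\textbf{Reformulation in terms of angles.} Fix $u,w$ such that $\gamma'(u)$ and $\gamma'(u+w)$ exist, and write $x=\gamma(u)$, $y=\gamma(u+w)$, $e=\GaussMap_\gamma(u,w)$. Since $\gamma$ is planar, $\abs{P^\perp_{\gamma'(u)}e}=\abs{\sin\Angle{\gamma'(u)}{e}}$, and likewise at $u+w$. Hence \eqref{eq:ProjektorEig} says that the tangent at $x$ and the tangent at $y$ make the same unoriented angle with the chord $\overline{xy}$.

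By convexity, \aref{lem:Eta convex to great half circle} and \aref{rmk:AngleEtaMonotonic}, both angles lie in $[0,\pi]$ and sit on the same side of the chord. The identity is therefore equivalent to
\[
\Angle{\gamma'(u)}{e}=\Angle{e}{\gamma'(u+w)}.
\]
This is the statement that the tangent–chord angles at the two endpoints coincide.

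\textbf{Linking to property (P).} Next I relate (P) to these tangent–chord angles. I use the limiting form of the circumference angle property: as the third point $z$ tends to $x$ along the arc, the direction $\overline{xz}$ converges to $\pm\gamma'(u)$. Consequently, the inscribed angle $\angle xzy$ converges to the tangent–chord angle at $x$; similarly, as $z\to y$ it converges to the tangent–chord angle at $y$.

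\emph{Direction (P) $\Rightarrow$ \eqref{eq:ProjektorEig}.} Suppose (P) holds for $x,y$. Then $z\mapsto\angle xzy$ is constant on the arc. This is continuous in $z$ by injectivity, as in the proof of \aref{lem:UmlaufWinkelSatzScharfBeiKreis}. Letting $z\to x$ and $z\to y$ shows that both tangent–chord angles equal that constant, which gives \eqref{eq:ProjektorEig}.

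\emph{Direction \eqref{eq:ProjektorEig} $\Rightarrow$ (P).} Here I would use angle sums in the triangle $x,z,y$ together with the identity applied to the pairs $(x,z)$, $(z,y)$ and $(x,y)$.

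Denote by $\alpha(a,b)$ the common tangent–chord angle of the pair $(a,b)$. By convexity the tangent direction at $z$ splits the exterior angles, so the angle at $z$ equals $\pi-\alpha(x,z)-\alpha(z,y)$. On the other hand, the angle of the triangle at $x$ is $\alpha(x,y)-\alpha(x,z)$, and the angle at $y$ is $\alpha(x,y)-\alpha(z,y)$. Summing the three angles to $\pi$ gives
\[
\angle xzy=\pi-\alpha(x,y).
\]
This is independent of $z$, so (P) holds. The argument is valid for almost all triples by Fubini, since the identity holds for a.e. pair.

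\textbf{Conclusion and main obstacle.} The last claim, that $\Image(\gamma)$ is a circle, then follows directly from \aref{lem:UmlaufWinkelSatzScharfBeiKreis}.

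I expect the main obstacle to be the measure-theoretic bookkeeping rather than the geometry. Specifically:
\begin{itemize}
\item[(a)] I must ensure that for a.e. pair $(x,y)$, the pairs $(x,z)$ and $(z,y)$ satisfy the identity for a.e. $z$. This follows from Fubini on the full-measure set in $\Domain\times\Domain$.
\item[(b)] I must handle degenerate configurations, such as straight segments of $\gamma$, where the tangent–chord angles vanish. Here I would argue that on a segment the identity combined with convexity forces a contradiction unless the segment has measure zero, for example by comparing $\alpha(x,y)=0$ with a nearby non-collinear pair.
\end{itemize}
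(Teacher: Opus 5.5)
Your main implication follows the paper's route. You use the angle sum in the triangle $x,z,y$, with the identity applied to $(x,z)$, $(z,y)$ and $(x,y)$, to get $\angle xzy=\pi-\alpha(x,y)$, and then \aref{lem:UmlaufWinkelSatzScharfBeiKreis} finishes. For the converse, your limits $z\to x$ and $z\to y$ are more direct than the paper's detour ``(P) $\Rightarrow$ circle $\Rightarrow$ identity''. Strictly, these limits are $\pi-a$ and $\pi-b$ in the notation below, not $a$ and $b$, but that is harmless.

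\textbf{The gap is the reformulation step.} Write $a=\Angle{\gamma'(u)}{e}$ and $b=\Angle{e}{\gamma'(u+w)}$, both in $[0,\pi]$. Then \eqref{eq:ProjektorEig} says $\sin a=\sin b$, i.e.\ $a=b$ \emph{or} $a+b=\pi$. For a convex curve, \aref{rmk:AngleEtaMonotonic} shows that $a+b$ is the turning of the tangent along the arc from $x$ to $y$. So the second alternative means exactly that $\gamma'(u+w)$ is a negative multiple of $\gamma'(u)$, and for such pairs the identity holds automatically without $a=b$. For example, on the ellipse $t\mapsto(2\cos t,\sin t)$ with $x=(2\cos\tfrac{\pi}{4},\sin\tfrac{\pi}{4})$ and $y=-x$, one gets $\cos a=\tfrac35=-\cos b$. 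Convexity and ``same side of the chord'' do not rule this out, yet your triangle computation needs $a=b$ for all three pairs. The paper's proof reads off the common angle $\alpha$ in the same unjustified way.

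\textbf{What is missing:} under the a.e.\ identity, pairs with antiparallel tangents must form a null set. Because the tangent angle of a convex curve is monotone, for fixed $u$ the set of $w$ with antiparallel tangent is an interval. It has positive length only along a straight segment, so it suffices to show that $\gamma$ contains no straight segment.

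This is where your item (b) belongs, though for a different reason than the one you give. Vanishing tangent--chord angles are harmless: collinear triples still satisfy your angle sum, with $\angle xzy=\pi$. Segments must be excluded because they are the only source of a non-null antiparallel set. The argument runs as follows.
\begin{itemize}
\item[1.] Let $S$ be a maximal segment. The points whose tangent is $\pm$ the direction of $S$ are $S$ itself and at most one other segment. So the points $y$ off $S$ with tangent not $\pm$ the direction of $S$ form a set of positive length.
\item[2.] For such $y$ and any $x\in S$ one has $a+b\neq\pi$. The identity then forces $a=b$, i.e.\ $e$ must bisect the pair $\gamma'(x),\gamma'(y)$.
\item[3.] As $x$ runs along $S$, $\gamma'(x)$ stays fixed while $e$ turns strictly, because $y$ lies off the supporting line of $S$. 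Hence at most one $x$ works for each such $y$, contradicting the a.e.\ identity.
\end{itemize}
Once segments are excluded, the tangent angle is strictly monotone and antiparallel pairs are null. Then $a=b$ holds a.e., and your Fubini bookkeeping (a) gives the triangle identity for a.e.\ triple.
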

\begin{proof}
    Since $\gamma\in W^{1,1}_{\mathrm{i,r}}(\Domain, \R^2)$ is convex and \aref{eq:ProjektorEig} is geometric, we may reparametrize and rescale in order to assume that $\gamma\in W^{1,\infty}_{\mathrm{i,a}}$.
    If $\gamma$ satisfies (P) almost everywhere, 
    then we can apply \aref{lem:UmlaufWinkelSatzScharfBeiKreis} and conclude that $\gamma$ is a circle.
    Hence, one computes that \aref{eq:ProjektorEig} holds for all $u,w \in \Domain$.\\
    Now suppose that $\gamma$ satisfies \aref{eq:ProjektorEig} for almost every $u,w \in \Domain$.
    We now chose $u,w$ such that $\gamma'(u), \gamma'(u+w)$ exist and claim that $\gamma$ satisfies (P) for $u$, $u+w$. 
    Let $L$ be a secant through $\Image(\gamma)$, intersecting $\Image(\gamma)$ at 
    the points $\gamma(u)$ and $\gamma(u+w)$.
    We denote $\mathcal{I}_{u,u+w}= \Image(\gamma\vert_{(u,u+w)})$ and $\mathcal{I}_{u+w,u}=\Image(\gamma)\setminus \overline{\mathcal{I}_{u,u+w}}$ the arcs of $\Image(\gamma)$, connecting $\gamma(u)$ and $\gamma(u+w)$.
    It suffices to show, that for a third point $\gamma(z)\in \Image(\gamma)\setminus\{\gamma(u), \gamma(u+w)\}$, that the angle $\phi(u,u+w,z)$ of the segments $\overline{\gamma(z)\gamma(u+w)}$ and $\overline{\gamma(z)\gamma(u)}$ only depends on the choice of the arc, where $\gamma(z)$ is located.
    
    Using \aref{eq:ProjektorEig}, we know that 
    \[
            \alpha
            =
            \Angle{
                \frac{\gamma(u+w)-\gamma(u)}
                {\abs{\gamma(u+w)-\gamma(u)}}
            }
            {\gamma'(u+w)}
            =
            \Angle{
                \gamma'(u)
            }
            {
                \frac{\gamma(u+w)-\gamma(u)}
                {\abs{\gamma(u+w)-\gamma(u)}}
            }
    \]
    Now we choose a third point $z\in (u,u+w)$, such that the tangent $\gamma'(z)$ exists.
    Applying \aref{eq:ProjektorEig}, we observe that
    \[
        \theta_1
        =
        \Angle{
                \frac{\gamma(z)-\gamma(u)}
                {\abs{\gamma(z)-\gamma(u)}}
            }
            {\gamma'(z)}
            =
            \Angle{
                \gamma'(u)
            }
            {
                \frac{\gamma(z)-\gamma(u)}
                {\abs{\gamma(z)-\gamma(u)}}
            }
    \]
    and that 
    \[
        \theta_2
        =
        \Angle{
                \frac{\gamma(u+w)-\gamma(z)}
                {\abs{\gamma(u+w)-\gamma(z)}}
            }
            {\gamma'(u+w)}
            =
            \Angle{
                \gamma'(z)
            }
            {
                \frac{\gamma(u+w)-\gamma(z)}
                {\abs{\gamma(u+w)-\gamma(z)}}
            }
    .
    \]
    The following picture illustrates the relation of the angles.
\begin{center}
\includegraphics[height=6cm, width=10cm]{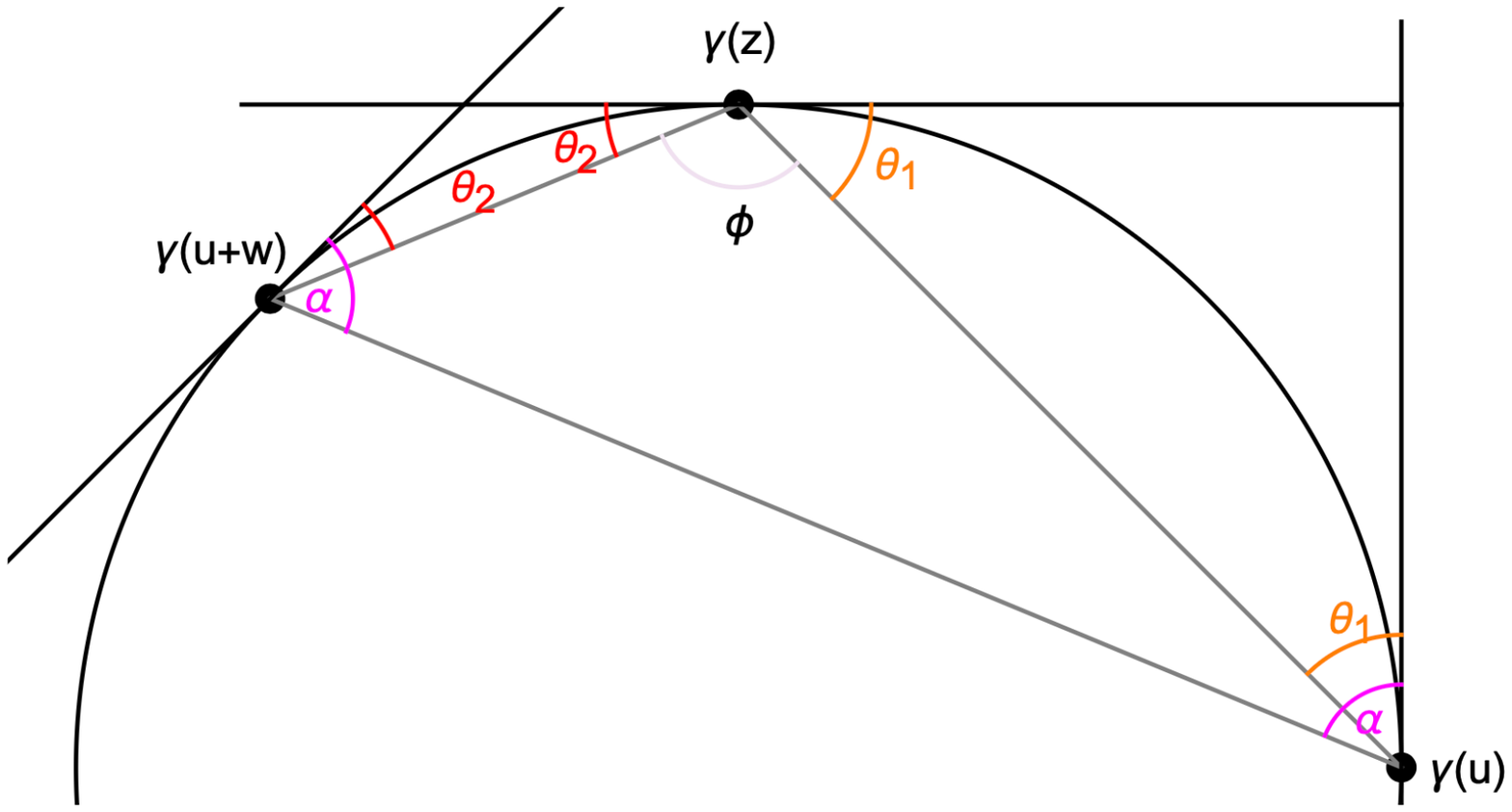}
\end{center}
    After applying some more elementary geometry, we observe
    \begin{align*}
        \pi&=
            2\alpha
            -(\theta_1+\theta_2)
            +
            \phi
            =
            \pi
        \\
        \pi
        &=
            \phi
            +
            \theta_1
            +
            \theta_2
        .
    \end{align*}
    Rearranging the terms, we conclude that $\phi=\pi-\alpha$. Note that, due to $\gamma$ being continuous and convex, $z\mapsto \phi(u,u+w,z)$ is continuous in $(u,u+w)$. Therefore, we conlude that for all $z_1,z_2\in (u,u+w)$, we have that
    \[
        \phi(u,u+w,z_1)
        =
        \phi(u,u+w,z_2)
    .
    \]
    If one of the statements holds true, we can apply \aref{lem:UmlaufWinkelSatzScharfBeiKreis} and conclude that $\Image(\gamma)$ is a circle.
\end{proof}

\begin{lemma}
	\label{lem: constant derivative of GaussMap}
	Let $\gamma\in W^{1,1}_{\mathrm{i,r}}(\Domain, \mathbb{R}^2)$.
	If $\abs{\partial_w \GaussMap_\gamma}=\pi$ for almost all $u,w$ or $\abs{\partial_u \GaussMap_\gamma}=2 \pi$ for almost all $u,w$, then $\gamma$ is a circle, parametrized by constant speed. 
\end{lemma}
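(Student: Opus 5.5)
We prove the lemma separately for the two hypotheses. In both cases we first show that $\gamma$ is convex, and then extract the circle from the constancy of the speed.

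\emph{Case $\abs{\partial_w \GaussMap_\gamma}=\pi$ a.e.} By Fubini, for almost every $u$ the path $w\mapsto \GaussMap_\gamma(u,w)$ has length exactly $\pi$. Hence equality holds in \aref{lem:LengthOfGaussMapinW} for almost every $u$, and $\gamma$ is convex and planar. For such $u$, \aref{lem:Eta convex to great half circle} and \aref{rmk:AngleEtaMonotonic} show that $\eta_{\gamma,u}$ runs monotonically along a great half circle from $t_\gamma(u)$ to $-t_\gamma(u)$, at constant speed $\pi$. Therefore the angle between $t_\gamma(u)$ and the secant $\gamma(u+w)-\gamma(u)$ equals $\pi w$.

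By \aref{eq:DerivGaussMapW}, $\abs{\partial_w\GaussMap_\gamma}=\abs{\gamma'(u+w)}\sin(\angle)/\abs{\gamma(u+w)-\gamma(u)}$, where $\angle$ is the angle between $\gamma'(u+w)$ and the secant. We combine this with the same information at the base point $u+w$, where the secant back to $\gamma(u)$ has parameter offset $1-w$. This gives that the angle between $\gamma'(u+w)$ and the secant is $\pi(1-w)$ measured from the other side, i.e.\ again $\pi w$. So the two tangent angles with each chord coincide, which is exactly \aref{eq:ProjektorEig} for almost all $u,w$. Since $\gamma$ is convex, \aref{thm:UmlaufwinkelsatzProjektorEig} then shows that $\Image(\gamma)$ is a circle.

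It remains to get constant speed. On a circle of radius $R$, the chord from $\gamma(u)$ making angle $\pi w$ with the tangent subtends arc length $2R\pi w$. So the arc length from $u$ to $u+w$ is proportional to $w$, which means the parametrization is affine in arc length.

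\emph{Case $\abs{\partial_u \GaussMap_\gamma}=2\pi$ a.e.} For almost every $w$ the curve $\rho_{\gamma,w}$ has length $2\pi$. To pass to all $w$ we use continuity in $w$ together with lower semicontinuity of length; alternatively, the argument of \aref{lem:RhoGreatCircleMonotonicAngleGammaConvex} only needs a dense set of irrational $w$ and $w\searrow0$. The equality case in \aref{lem:LengthOfGaussMapinU} (the standard Fenchel argument) then shows that $\rho_{\gamma,w}$ traces a great circle with monotone angle, and hence $\gamma$ is convex. Moreover the angle function satisfies $\theta_{\gamma,w}(u)=\theta_{\gamma,w}(0)+2\pi u$. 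Letting $w\searrow0$ as in that lemma, the tangent angle $\Angle{e_1}{t_\gamma(u)}$ is affine with slope $2\pi$. So $\gamma$ has constant curvature with respect to the parameter $u$: the tangent turns at a constant rate per unit parameter. A convex closed curve whose tangent angle is affine in the parameter is the image of a circle.

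Constant speed does not follow immediately, because $t_\gamma$ being affine in $u$ says nothing about $\abs{\gamma'}$. We recover it from the secant directions. The relation $\rho_{\gamma,w}(u)=e^{i(c_w+2\pi u)}$ says that the chord from $\gamma(u)$ to $\gamma(u+w)$ has direction advancing uniformly in $u$. On a circle, the chord direction equals the average of the tangent angles at its two endpoints. Since the tangent angle is $2\pi u$ plus a constant, the chord direction automatically advances uniformly and gives no new information. The extra information sits in $c_w$, which must encode the arc length between the endpoints: at $u=0$ the chord direction depends on the angular position of $\gamma(w)$, so $c_w=\pi w+\text{const}$. The angular position of $\gamma(w)$ is therefore $2\pi w$, i.e.\ $\gamma$ runs at constant angular speed on the circle, which is constant speed.

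The main obstacle is in the first case: carefully justifying the symmetric relation between the tangent angle at $u+w$ and at $u$, and in particular the orientation conventions that turn "the angle at $u+w$ is $\pi(1-w)$ from the other side" into the equality \aref{eq:ProjektorEig}. In the second case, the delicate point is the passage from almost every $w$ to every $w$.
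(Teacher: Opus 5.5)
Your first case is correct, though it takes a different route from the paper. The paper reads off $r_{\TP}[\gamma](y,x)=\abs{\gamma'(y)}/(2\pi)$ for a fixed good $y$ and all $x$, and then places $\Image(\gamma)$ on the tangent circle of that radius. You instead extract the tangent--chord angle $\pi w$ at both endpoints, obtain \eqref{eq:ProjektorEig} and apply \aref{thm:UmlaufwinkelsatzProjektorEig}; the orientation issue you flag disappears if you work with unsigned angles.

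The second case has a genuine gap. The claim that ``a convex closed curve whose tangent angle is affine in the parameter is the image of a circle'' is false. Take an ellipse and let $\gamma(u)$ be the point where its unit tangent equals $e^{2\pi i u}$. This is a smooth, injective, regular, convex parametrization with $\Angle{e_1}{t_\gamma(u)}=2\pi u$ and $\abs{\gamma'(u)}=2\pi/\abs{\kappa_\gamma(u)}$, but it is not a circle. So the limit $w\searrow 0$ of the hypothesis only yields $\abs{\kappa_\gamma}\abs{\gamma'}=2\pi$ and says nothing about the shape. Your own remark that an affine tangent angle ``says nothing about $\abs{\gamma'}$'' is the symptom. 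On a circle the unit tangent determines the point, so if the image were already known to be a circle, constant speed would follow at once. Your constant-speed paragraph then presupposes the circle and asserts $c_w=\pi w+\mathrm{const}$ without deriving it.

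What is missing is a real use of the hypothesis for $w$ bounded away from $0$. The paper does this for convex $\gamma$ in four steps.
\begin{itemize}
\item It splits $\abs{\partial_u\GaussMap_\gamma}=\frac{\abs{\gamma'(u)}}{2r_{\TP}[\gamma](u,u+w)}+\abs{\partial_w\GaussMap_\gamma}$.
\item It integrates in $w$ and uses $\int_0^1\abs{\partial_w\GaussMap_\gamma}\dd w=\pi$ to obtain $\int_0^1 r_{\TP}[\gamma](u,u+w)^{-1}\dd w=2\pi/\abs{\gamma'(u)}=\abs{\kappa_\gamma(u)}$.
\item A regularity bootstrap makes $\kappa_\gamma$ continuous.
\item Blaschke's rolling theorem at a point of minimal curvature then forces $r_{\TP}$ to be constant there, hence a circle.
\end{itemize}

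You could also close the gap inside your own framework. Set $h\ceq\abs{\gamma'}$, so that $t_\gamma(s)=e^{i(c_0+2\pi s)}$, and put $\alpha_w\ceq c_w-c_0$. The relation $\GaussMap_\gamma(u,w)=e^{i(c_w+2\pi u)}$ then says $\int_0^w h(u+s)\sin(2\pi s-\alpha_w)\dd s=0$ for all $u$ and a.e.\ $w$. In Fourier coefficients this reads $\hat h_k\int_0^w e^{2\pi i k s}\sin(2\pi s-\alpha_w)\dd s=0$ for all $k\in\Z$.
\begin{itemize}
\item For $k=0$: since $\hat h_0=\PathLength(\gamma)>0$, this forces $\alpha_w\equiv\pi w$ modulo $\pi$.
\item For $k\neq 0$: the integral is then a real-analytic function of $w$ that is not identically zero, since its expansion at $w=0$ starts with a nonzero multiple of $w^3$. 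Choosing a suitable good $w$ gives $\hat h_k=0$.
\end{itemize}
Thus $h$ is constant, and together with the affine tangent angle this gives a circle traversed at constant speed.
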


\begin{proof}
	Let $\abs{\partial_w \GaussMap_\gamma}=\pi$ for almost all $u,w$.
    Using \aref{eq:DerivGaussMapW} and abbreviating $r_{\TP}\ceq r_{\TP}(\gamma)$, this implies
	\[
	    \abs{\partial_w \GaussMap_\gamma(u,w)}
        =
        \frac{1}{2} \frac{1}{r_{\TP}(u+w,u)} \abs{\gamma ^\prime (u+w)}
        =
        \pi
        \text{ for almost every }
        u,w
        .
	\]
	Substituting $x=u$ and $y=u+w$, this yields $r_{\TP}(y,x) =\frac{\abs{\gamma ^\prime (y)}}{2\pi}$ for almost all $x,y \in \Domain$. 
    Choose $y \in \Domain$ such that $\gamma ^\prime (y)$ exists and satisfies $r_{\TP}(y,x) =\frac{\abs{\gamma ^\prime (y)}}{2\pi}$ for almost all $x \in \Domain$. 
    Then by continuity of $\gamma$ and $r_{\TP}(y,\cdot)$, it holds for all $x \in \Domain$. 
    Using the supporting hyperplane theorem, we may assume that $\gamma (y)=0$, $\gamma'(y)= \alpha e_1$ and $\Image(\gamma)\subset \R\times \intervalco{0,\infty}$.
    Therefore, $\Image (\gamma )$ is contained in the following circle.
    \[
        \Image(\gamma)\subset 
        \{ p\in \R\times \intervalco{0,\infty}: \abs{p-\frac{\abs{\gamma'(y)}}{2\pi}e_2}=\frac{\abs{\gamma'(y)}}{2\pi}\}
    \]
    Since $\gamma$ is an embedding, it follows that $\Image(\gamma)$ is a circle.
    Hence, $r_{\TP}(x,y)$ is constant on $\Domain ^2$. 
    Plugging this into $\abs{\partial_w \GaussMap_\gamma}=\pi$ yields that $\abs{\gamma ^\prime}$ is constant. 
    \\
	\\
	Now suppose that $\abs{\partial_u \GaussMap_\gamma}=2 \pi$ almost everywhere. 
    First, we use the critical point equation in order to improve the a priori regularity of the curvature $\kappa_\gamma$.
    \\
	By \aref{lem:LengthOfGaussMapinU}, we observe that $\gamma$ is convex.
    In particular, the curvature vector $\vec{\kappa} _\gamma$ exists almost everywhere. 
    By contradiction, one can prove that $\gamma$ is strictly convex. 
    Let $T=\gamma'/\abs{\gamma'} \in L^\infty$. 
    A short computation yields $T=\lim_{w\rightarrow 0}\GaussMap_\gamma$.
    By convexity, $T$ is differentiable almost everywhere with $\partial_{u}T=\vec{\kappa} _\gamma \abs{\gamma '}$. 
	We now claim, that $\partial_u T= \lim_{w\rightarrow 0}\partial_u \GaussMap_\gamma$.
    Let $u\in \Domain$ be given, such that $\vec{\kappa_{\gamma}(u)}$  exists.
    One computes that
    \begin{align*}
		\lim_{w\rightarrow 0}
		\partial_u \GaussMap_\gamma
		&=
		\lim_{w\rightarrow0}
			\frac{1}{\abs{ \gamma(u+w)-\gamma(u)}}
			(
			\gamma'(u+w)-\gamma'(u)
			-
			\inner{\GaussMap_\gamma, \gamma'(u+w)-\gamma'(u)}\GaussMap_\gamma
			)
		\\
		&=
		\frac{1}{\abs{\gamma'(u)}}
			(
			\gamma''(u)
			-
			T(u)\inner{T(u), \gamma''(u)}
			)
		=
	    \vec{\kappa}_\gamma (u) \abs{\gamma'(u)} \quad \text{for almost every } u \in \Domain
        .
	\end{align*} 
	Since $\abs{\GaussMap_\gamma} \equiv 1$ and $\abs{\partial_u \GaussMap_\gamma} \equiv 2 \pi$ almost everywhere, we observe that $\GaussMap_\gamma \in W^{1,\infty}$.
    Hence, by dominated convergence $T\in W^{1,1}$.
    Furthermore, $\abs{T'}=\abs{\kappa _\gamma} \abs{\gamma '}=2 \pi$ and thus 
    \[
        T'(u)= \abs{T'(u)} \, J\,T(u)=2 \pi \,  J \,T(u),
        \text{ where }
        J=\begin{pmatrix}
            0&1\\-1&0
        \end{pmatrix}
        .
    \]
    Since $u\mapsto 2\pi \, J\,T(u)$ is continuous on $\Domain$, we obtain a continuous representative of the weak derivative $T'$.
    Hence, $T \in C^1$. 
    Using the convexity of $\gamma$ we also get
	\begin{align*}
		2 \pi =& 
		\frac{
			\abs{P^\perp _{
					\GaussMap_\gamma(u+w,u)
				} \gamma^\prime(u)}}
		{\abs{ \gamma(u+w)-\gamma(u)}}
		\, + \, 
		\frac{
			\abs{P^\perp _{
					\GaussMap_\gamma(u+w,u)
				} \gamma^\prime(u+w)}}
		{\abs{ \gamma(u+w)-\gamma(u)}}
		\\
		=&
		\frac{
			\abs{P^\perp _{T(u) } (\gamma (u+w)-\gamma (u))}}
		{\abs{ \gamma(u+w)-\gamma(u)}^2}
		\, \abs{\gamma '(u)}
		\, + \,
		\frac{
			\abs{P^\perp _{T(u+w) } (\gamma (u+w)-\gamma (u))}}
			{\abs{ \gamma(u+w)-\gamma(u)}^2}
			\, \abs{\gamma '(u+w)}	
	\end{align*}
    Rearranging the terms, one obtains the following.
    \[
		\abs{\gamma '(u+w)}
		=
		[
            2 \pi
            -
			\frac{
				\abs{P^\perp _{T(u) } (\gamma (u+w)-\gamma (u))}}
			    {\abs{ \gamma(u+w)-\gamma(u)}^2}
			\abs{\gamma '(u)}
        ] 
		\, 
		\frac{\abs{ \gamma(u+w)-\gamma(u)}^2}
			{\abs{P^\perp _{T(u+w) } (\gamma (u+w)-\gamma (u))}}
    \]
	Strict convexity of $\gamma$ implies that $\abs{P^\perp _{T(u+w) } (\gamma (u+w)-\gamma (u))}=0$ iff $w=0$. 
    Thus, fixing $u_1 \in \Domain$ such that $\gamma '(u_1)$ exists and using $T\in C^1, \gamma \in W^{1,1}_{\mathrm{i,r}}$,
    we observe that the right hand-side is continuous for $w \in (0,1)$. 
    Repeating the same procedure for $u_2 \in \Domain\setminus \{u_1\}$, such that $\gamma '(u_2)$ exists, we conclude that $\abs{\gamma'}\in C^0$. 
    Using $\gamma '= T \cdot \abs{\gamma '}$, we conclude that $\gamma \in C^1$. 
    Furthermore, this implies existence and continuity of $\vec{\kappa}_\gamma=T'/\abs{\gamma '}$ on $\Domain$. 
	By \aref{lem:LengthOfGaussMapinW}, the following holds true for all $u\in \Domain$.
	\begin{align*}
	2 \pi 
	=& 
	\int_0^1 \abs{\partial_u \GaussMap_\gamma} \dd w
	=
	\int_0^1
	\frac{
		\abs{P^\perp _{
			\GaussMap_\gamma(u+w,u)
		} (\gamma^\prime(u+w)-\gamma^\prime(u))}}
	{\abs{ \gamma(u+w)-\gamma(u)}} \dd w
	\\
	\overset{\gamma \text{ convex}}{=}&
	\int_0^1
	\frac{
		\abs{P^\perp _{
				\GaussMap_\gamma(u+w,u)
			} \gamma^\prime(u)}}
	{\abs{ \gamma(u+w)-\gamma(u)}}
	\, + \, 
	\frac{
	\abs{P^\perp _{
			\GaussMap_\gamma(u+w,u)
		} \gamma^\prime(u+w)}}
		{\abs{ \gamma(u+w)-\gamma(u)}} \dd w
		\\
	=&
	\int_0^1
	\frac{1}{2} \frac{1}{r_{\TP}[\gamma](u,u+w)} \abs{\gamma ^\prime (u)} \dd w
	\, + \, 
	\int_0^1
	\abs{\partial_w \GaussMap_\gamma} \dd w
	\\
	=&
	\int_0^1
	\frac{1}{2} \frac{1}{r_{\TP}[\gamma](u,u+w)} \abs{\gamma ^\prime (u)} \dd w
	\, + \, 
	\pi
	\end{align*}
    Rearranging the terms, we observe that
    \begin{equation}\label{eq:CurvViaIntegralOfTPRadius}
        \int_0^1
	        \frac{1}{r_{\TP}[\gamma](u,u+w)} 
        \dd w
    =
        \frac{2 \pi}{\abs{\gamma '(u)}} 
    =
        \abs{\kappa_\gamma (u)}
    .
    \end{equation}
    Let $u^\ast\in \argmin_{u\in \Domain}\abs{\kappa_\gamma(u)}$.
    By \cite[Theorem 1']{koutroufiotis1972blaschke}, $\Image(\gamma)$ is contained in the closed disc, whose boundary is the osculating circle of $\gamma$ at $u^\ast$.
	Thus,
	\[
	    \frac{1}{r_{\TP}[\gamma](u^\ast,u^\ast+w)} \geq \abs{\kappa_\gamma (u^\ast)}
        \text{ for all }
        w\in [0,1].
	\]
    Combining this observation with \aref{eq:CurvViaIntegralOfTPRadius}, we conclude that
	\[
	    \abs{\kappa _\gamma (u^\ast)}
        = 
	    \frac{1}{r_{\TP}[\gamma](u^\ast,u^\ast +w)} 
        \text{ for all } w \in [0,1]
        .
	\]
	Hence, $\Image (\gamma)$ is a circle and therefore, $\kappa_ \gamma$ is constant. 
    Inserting this into $2\pi = \abs{\kappa _\gamma} \abs{\gamma '}$, we infer that $\abs{\gamma '}$ is constant.
\end{proof}

\begin{remark}
	\label{rmk: constant derivative of GaussMap}
	If in \aref{lem: constant derivative of GaussMap} one only considers arc length parametrized curves or the geometric version $\tilde{\GaussMap}$, the proof simplifies substantially , as one can immediately conclude at $\abs{\kappa _\gamma}\equiv 2 \pi$.
    This also happens, if one assumes $\abs{\frac{1}{\abs{\gamma'(u)}}\partial_u \GaussMap_\gamma}=2\pi$ or $\abs{\frac{1}{\abs{\gamma'(u+w)}}\partial_w \GaussMap_\gamma}=\pi$.
    These scenarios arise, if one minimizes 
    \begin{align*}
        \Dirichlet_p(\gamma)
        &=
        \int_{0}^1
            \abs{\frac{1}{\abs{\gamma'(u)}}\partial_u \GaussMap_\gamma}^p
        \abs{\gamma'(u)}\dd u
        \quad \text{ or }
        \\
        \Dirichlet_p(\gamma)
        &=
        \int_{0}^1
            \abs{\frac{1}{\abs{\gamma'(u+w)}}\partial_u \GaussMap_\gamma}^p
        \abs{\gamma'(u+w)}\dd w
    \end{align*}
    by using Jensen's inequality and \aref{lem:LengthOfGaussMapinU} (respectively \aref{lem:LengthOfGaussMapinW}).
\end{remark}